\theoremstyle{plain}
\newtheorem{theorem}{Theorem}
\newtheorem{lemma}{Lemma}
\newtheorem{example}{Example}
\theoremstyle{definition}
\theoremstyle{remark}
\newtheorem*{remark}{Remark}
\DeclareMathOperator{\tr}{tr}
\DeclareMathOperator{\ran}{Ran}
\DeclareMathOperator*{\argmin}{arg\,min}
\newcommand{\thmref}[1]{Theorem~\ref{#1}}
\newcommand{\lemref}[1]{Lemma~\ref{#1}}
\newcommand{\secref}[1]{Section \ref{#1}}
\newcommand{\figref}[1]{Figure~\ref{#1}}
\newcommand{\etc}{\textit{etc}.}
\newcommand{\ie}{\textit{i.e.}}
\newcommand{\eg}{\textit{e.g.}}
\newcommand{\ud}{\,\mathrm{d}}
\newcommand{\rd}{\mathrm{d}}
\newcommand{\CC}{\mathbb{C}}
\newcommand{\Int}{\mathbb{Z}}
\newcommand{\Complex}{\mathbb{C}}
\newcommand{\ee}{\mathbb{E}}
\newcommand{\reviewnumone}[1]{{#1}}
\newcommand{\reviewnumtwo}[1]{{#1}}
\newcommand{\addchange}[1]{{#1}}
\newcommand*{\rom}[1]{\expandafter\@slowromancap\romannumeral #1@}
\newcommand{\vect}[1]{\boldsymbol{#1}}
\newcommand{\wt}[1]{\widetilde{#1}}
\DeclareFontFamily{U}{mathx}{\hyphenchar\font45}%
   \DeclareFontShape{U}{mathx}{m}{n}{<->mathx10}{}%
   \DeclareSymbolFont{mathx}{U}{mathx}{m}{n}%
   \DeclareMathAccent{\widebar}{0}{mathx}{"73}%
  \newcommand{\widebar}[1]{\overline{#1}}%
\newcommand{\mc}[1]{\mathcal{#1}}
\newcommand{\eps}{\epsilon}
\newcommand{\abs}[1]{\lvert#1\rvert}
\newcommand{\Abs}[1]{\left\lvert#1\right\rvert}
\newcommand{\norm}[1]{\lVert#1\rVert}
\newcommand{\Norm}[1]{\left\lVert#1\right\rVert}
\newcommand{\Average}[1]{\left\langle#1\right\rangle}
\newcommand{\ket}[1]{\lvert#1\rangle}
\newcommand{\Ket}[1]{\left\lvert#1\right\rangle}
\newcommand{\inner}[2]{\langle#1, #2\rangle}
\newcommand{\Inner}[2]{\left\langle#1, #2\right\rangle}
\def\bigl{\mathopen\big}
\def\bigr{\mathclose\big}
 \newcommand\restr[2]{{% we make the whole thing an ordinary symbol
  \left.\kern-\nulldelimiterspace % automatically resize the bar with \right
  #1 % the function
  \vphantom{\big|} % pretend it's a little taller at normal size
  \right|_{#2} % this is the delimiter
  }}
 \newcommand{\dimn}{n}
 \newcommand{\hbt}{\Complex^{\dimn}}
\newcommand{\tm}{[0,T]}
\newcommand{\lr}{LR}
\newcommand{\lrt}{low-rank}
\newcommand{\mani}{\mathcal{M}}
\newcommand{\id}{\mathrm{Id}}
\newcommand{\gen}{\mathcal{A}}
\newcommand{\proj}{\mathcal{P}}
\newcommand{\projperp}{\mathcal{Q}}
\newcommand{\hs}{HS}
\newcommand{\lb}{Lindblad}
\newcommand{\fp}{Fokker-Planck}
\newcommand{\kl}{Karhunen-Lo{\` e}ve}
\newcommand{\lbop}{\mathcal{L}}
\newcommand{\gwieq}{Gr{\"o}nwall's inequality}
\newcommand{\ito}{It{\^o}}
\newcommand{\LU}{\wt{\vect{U}}}
\newcommand{\LSIG}{\wt{\vect{\sigma}}}
\newcommand{\sdm}{pseudometric}
\newcommand{\manitheta}{\mani_{\Complex^r}}
\newcommand{\funcsp}{\mathsf{F}}
\newcommand{\tang}{\mathcal{T}}
\newcommand{\uspace}{V_r(\hbt)}
\newcommand{\thismethod}{stochastic dynamical \lrt{} approximation}
\newcommand{\thismethodshort}{SDLR}
\newcommand{\hc}{\text{h.c.}}
\newcommand{\range}{\text{Ran}}
\newcommand{\opu}{\vect{U}}
\newcommand{\opg}{\vect{G}}
\newcommand{\opo}{\vect{O}}
\newcommand{\opv}{\vect{V}}
\newcommand{\opc}{\vect{C}}
\newcommand{\opd}{\vect{D}}
\newcommand{\oprho}{\vect{\rho}}
\newcommand{\ophami}{\vect{H}}
\newcommand{\oplb}{\vect{L}}
\newcommand{\opsigma}{\vect{\sigma}}
\newcommand{\opf}{\vect{F}}
\newcommand{\opeta}{\vect{\eta}}
\DeclarePairedDelimiter\floor{\lfloor}{\rfloor}
\newcommand{\Anticomm}[2]{\left[#1, #2\right]_{+}}
\newcommand{\Comm}[2]{\left[#1, #2\right]_{-}}
\newcommand{\eqrefn}[1]{Eq. \eqref{#1}}
\newdimen\figrasterwd
\begin{document}

\begin{frontmatter}

  \title{Stochastic dynamical low-rank approximation method}

  \author{Yu Cao} \address{Department of Mathematics, Duke University,
    Box 90320, Durham, NC 27708 USA} \ead{yucao@math.duke.edu}
  
  \author{Jianfeng Lu} \address{Departments of Mathematics, Physics,
    and Chemistry, Duke University, Box 90320, Durham, NC 27708 USA}
  \ead{jianfeng@math.duke.edu}
  
  \date{\today}

\begin{abstract}
  In this paper, we extend the dynamical low-rank approximation method
  to the space of finite signed measures.  Under this framework, we
  derive stochastic low-rank dynamics for stochastic differential
  equations (SDEs) coming from classical stochastic dynamics or
  unraveling of Lindblad quantum master equations. We justify the
  proposed method by error analysis and also numerical examples for
  applications in solving high-dimensional SDE, stochastic Burgers'
  equation, and high-dimensional \lb{} equation.
\end{abstract}

\begin{keyword}
  Dynamical low-rank approximation, stochastic differential equation,
  \lb{} equation, model reduction.
\end{keyword}

\end{frontmatter}

% \tableofcontents

\section{Introduction}

\reviewnumone{Many problems in computational physics are challenging
  to solve due to \emph{curse of dimensionality}, such as high
  dimensional master equations and many-body quantum dynamics.  In
  attempt to resolve the difficulty, many ideas have been proposed:
  model reduction method
  \cite{antoulas_survey_2001,Chatterjee_00_pod}, Monte Carlo method
  \cite{Gisin92,Dalibard92,Tully}, \etc{} In many situations, several methods of
  dimension reduction need to be combined together.  For instance,
  after applying Monte Carlo method to some deterministic dynamics by
  simulating a stochastic differential equation (SDE) instead, the
  dimension of that SDE may still be very large.  Then it is
  attractive to further apply model reduction method in order to
  capture the main dynamical flows. This is our motivation to study
  model reduction method for high-dimensional SDEs arising from
  high-dimensional PDEs or matrix ODEs. In particular, our main
  motivation comes from two important physical systems: Fokker-Planck
  equation \cite{Pavliotis} and \lb{} equation
  \cite{Lindblad76,Gorini76,Breuer}.

  \fp{} equations and \lb{} equations are the governing master
  equations to describe system evolution for open classical and
  quantum systems respectively under Markovian approximation.  Both
  are challenging to solve when the dimension becomes large. To
  resolve this problem, it is standard to consider Monte Carlo
  (particle) method based on stochastic differential equations, with
  statistical average of sample trajectories to obtain the quantity of
  interest.  For the quantum case, such methods are known as
  \enquote{unraveling} and \enquote{stochastic wave-function method}
  for Lindblad equation \cite{Gisin92,Dalibard92}.  }

More specifically, suppose we would like to solve the Fokker-Planck
equation $\partial_t \mu_t = \gen_t \mu_t$ where $\mu_t$ is the
probability distribution (or measure to be more general) and $\gen_t$
is a time-parametrized operator mapping a probability distribution to
its tangent space.  In the particle based methods, one simulates a SDE
$X_t$ with infinitesimal generator $\gen_t^{*}$ (the adjoint operator
of $\gen_t$) and with initial condition $X_0$ drawn from $\mu_0$ and thus
the distribution of $X_t$ is exactly $\mu_t$.  Similarly, as the
quantum analog, \lb{} equation has the form
$\frac{\ud}{\ud t} \oprho = \lbop(\oprho)$ where $\oprho$ denotes the
density matrix of a quantum system and $\lbop$ is the generator for a
completely positive dynamical semigroup \cite{Lindblad76,Gorini76}. 
\reviewnumone{One may solve it by sampling a SDE
$X_t$ such that $\ee\bigl[X_t X^{\dagger}_t\bigr]$ is
exactly the solution of \lb{} equation (see Lemma
\ref{lemma::lb_unravel} below for more details).} 
\addchange{There are various
choices of the SDEs, \eg, quantum state diffusion (QSD) \cite{Gisin92} and linear
quantum state diffusion (LQSD) \cite{Brun00}.} While it is also possible to use other
stochastic processes such as jumping process \cite{Dalibard92, Brun00},
we will limit the scope of our consideration to Monte Carlo methods
based on diffusion processes.

The Monte Carlo method for both \fp{} equation and \lb{} equation can be described under the same framework:
\begin{displayquote}
  Given a $\hbt$-valued SDE $X_t$, one would like to approximate
  $\ee\left[f(X_t)\right] \equiv \int f\ \ud\mu_t$ for a collection of
  prescribed functions $f \in \funcsp$, where $\mu_t$ is the
  distribution of $X_t$.
\end{displayquote}
In the case of Fokker-Planck equation, $\funcsp$ could be a collection
of smooth functions; in the case of \lb{} equation, $\funcsp$ could be
a singleton set $\left\{ f(x) = x x^{\dagger} \right\}$ where
$x\in \hbt$. In the sequel, we shall consider $f(x) = x x^{\dagger}$
only, which turns out to be an interesting and useful choice: in the
case of Fokker-Planck equation, choosing such $f$ means one would like
to calculate the second moment of measure $\mu_t$; in the case of
\lb{} equation, choosing such $f$ means one would like to compute the
density matrix.

To reduce the computational complexity, 
a popular approach is
model reduction, that is, to retrieve the dynamics  by only
capturing the evolution of a lower-dimensional object.  For our case,
there are two directions:
\begin{enumerate}[(i)]
\item find a low-rank approximation for $X_t$,  or,
\item find a low-rank approximation for $\mu_t$.  
\end{enumerate}
\addchange{In the literature, there are several methods taking
the first approach, in the flavor of \kl{} expansion (KLE): for
instance, proper orthogonal method (POD) \cite{Chatterjee_00_pod, sapsis_dynamically_2011}, dynamical orthogonal (DO) method \cite{sapsis_dynamically_2011,Sapsis09, Sapsis12,
	Musharbash15} and dynamical bi-orthogonal method (DyBO)
\cite{Cheng13_theory, Cheng13_complexity}.
It is clear that in the above framework for our cases, the realization of randomness in $X_t$ is not important, whereas the distribution $\mu_t$ is the key for accurate approximation.} Hence, it is natural to consider the low-rank approximation for $\mu_t$, \ie, on the space of probability measures. Then the problem is formulated as follows:
\begin{displayquote}
\addchange{
  Given a collection of prescribed test functions $\funcsp$
  and the time evolution equation of probability measures
  $\partial_t \mu_t = \gen_t \mu_t$ on $\hbt$, 
  one would like
  to find low-rank approximation $\mu_{\lr,t}\approx \mu_t$ such that
\[\sup_{f\in \funcsp} \Norm{\int f\ \ud\mu_t - f\ \ud\mu_{\lr,t} }\]
is small.
}
\end{displayquote}
As a remark, in \secref{sec::sdlr}, we shall use the space of finite
signed measures, instead of probability measures to avoid the
technicality; please see the discussion in \secref{sec::sdlr} for
details.

Our work is motivated by extending the (deterministic) dynamical
low-rank approximation, introduced by Koch and Lubich in
\cite{Lubich07} for matrix ODEs, to the stochastic case. The main idea in the dynamical low-rank
approximation has been illustrated in the context of matrix ODE
\cite{Lubich07}, summarized in the next paragraph.

Consider a matrix ODE system $M(t)\in \Complex^{n\times n}$, 
\begin{equation}
  \frac{\ud}{\ud t}{M}(t) = F(t, M(t)).
\end{equation}
The dynamical low-rank approximation method in \cite{Lubich07} consists of two steps. Firstly,
identify a sub-manifold $\mathcal{M}_{r}\subset \Complex^{n\times n}$
and approximate the matrix ODE solution $M(t)$ by
$M_{\lr}(t)\in \mathcal{M}_r$ for all $t$; secondly, the time-evolution $M_{\lr}(t)$ is given by
\begin{equation}
\label{eqn::tsp_ode}
  \frac{\ud}{\ud t} M_{\lr}(t) = \argmin_{v \in \tang_{M_{\lr}(t)} \mc{M}_r} d\bigl(v, F(t, M_{\lr}(t))\bigr),
\end{equation}
where $\tang_{M_{\lr}(t)} \mc{M}_r$ is the tangent space of $\mathcal{M}_r$ at
the current location $M_{\lr}(t)$ and $d(v_1, v_2) := \Norm{v_1 - v_2}$ is a metric on tangent space; thus the evolution is constructed as
close as possible to the solution of matrix ODE by projecting $F(t, M_{\lr}(t))$ onto the tangent space $\tang_{M_{\lr}(t)}\mani_r$.

In our proposed method, we adopt this idea to the space of finite
signed measures on $\hbt$ with bounded second moment, denoted by $\mani$. The subspace $\mani_r$
in this case is defined as the space of finite signed measures
supported on a linear subspace of $\hbt$ with dimension at most
$r$. Then, we hope to approximate $\mu_t$ by $\mu_{\lr,t}\in
\mani_r$.
The time-evolution equation of the low-rank approximation is given by
\begin{equation*}
 \partial_t \mu_{\lr,t} \equiv \gen_{\lr,t} \mu_{\lr,t} := \argmin_{\nu \in \tang_{\mu_{\lr,t}}\mani_r}  d_{\funcsp}\left(\nu,\ \gen_{t} \mu_{\lr,t}\right),
 \end{equation*}
 where $d_{\funcsp}$ is a \sdm{} defined in \eqrefn{eqn::metric}
 below and $\tang_{\mu_{\lr,t}} \mani_r$ is the tangent space of
 $\mani_r$ at $\mu_{\lr,t}$. We will refer this method as the
 \thismethod{} method (or \thismethodshort{} in abbreviation).

As a concrete example, let $\funcsp = \left\{ f(x) = x x^{\dagger} \right\}$ be a singleton set, consisting only one test function (which maps to $\CC^{n\times n}$). Assume that the time evolution equation $\partial_t \mu_t  = \gen_{t} \mu_t$ is the Fokker-Planck equation of a SDE of the form
\begin{equation}
\label{eqn::sde}
\ud X_t = a(X_t, t)\ud t + \sum_{j=1}^{N} b_j(X_t, t) \ud W_j,
\end{equation}
where $X_t\in \hbt$, $a$ and $b_j$ are functions
$\hbt\times \tm \rightarrow \hbt$ and $W_j$ are independent
real-valued standard Brownian motions. 
With some additional assumptions \reviewnumtwo{and restrictions}, one could obtain the low-rank dynamics given by \eqrefn{eqn::low_rank_xx} (or equivalently \eqrefn{eqn::low_rank_xx_v2}). The details are given in \thmref{thm::fx=xx}, which is one of the main results in this paper.

%As we mentioned earlier, there are two levels of low-rank
%approximation, one at random level, \ie, for $X_t$, and the other at
%deterministic level, \ie, for $\mu_t$. Our scheme above takes the
%second approach, which has not been studied before, to the best of our
%knowledge.  In literature, there are, however, several methods taking
%the first approach, in the flavor of \kl{} expansion (KLE): for
%instance, dynamical orthogonal (DO) method \cite{Sapsis09, Sapsis12,
%  Musharbash15} and dynamical bi-orthogonal method (DyBO)
%\cite{Cheng13_theory, Cheng13_complexity}.
%The comparison between our
%approach and dynamical orthogonal method will be briefly discussed at
%the end of \secref{sec::example} and in the numerical examples.

As already mentioned above, the stochastic dynamical low-rank
approximation is also motivated by developing efficient methods for
the Lindblad quantum master equations. In that context, the
deterministic low-rank approximation has been studied by Le Bris and
Rouchon to find low-rank approximation of \lb{} equation
\cite{LeBris13}.  In the subsequent work \cite{LeBris15}, Le Bris,
Rouchon and Roussel also introduced an unraveling scheme for the
low-rank quantum master equation obtained in \cite{LeBris13}.  The
unraveling of Lindblad equations and its connection with the low-rank
approximation will be discussed in
Section~\ref{sec::discuss::unravel}. In particular, as another main
result of this paper, we establish a commuting diagram of unraveling
and low-rank approximation, with the proposed \thismethodshort{}
method.

The rest of the paper is organized as follows. In \secref{sec::sdlr},
we shall formulate the \thismethod{} method in the space of finite
signed measures. Then in \secref{sec::example}, we will provide a concrete
example, in which a low-rank dynamics for \fp{} equation is derived,
as well as the low-rank dynamics of the SDE for that \fp{}
equation. The comparison of our method and DO method will also be given at
the end of \secref{sec::example}.
\reviewnumtwo{
Consistency of our low-rank approximation and
error analysis will be provided in \secref{sec::error}.
In \secref{sec::discuss::unravel},
we shall establish the connection between the action of dynamical low-rank
approximation and the action of unraveling. We will prove a commuting
relation between them.}
Then numerical
results will be presented in \secref{sec::numerics} to demonstrate the
performance. In \secref{sec::conclusion}, we will give a brief summary
and some potential follow-up work.

Throughout this paper, 
$\Norm{\cdot}_{\hs}$ means Hilbert-Schmidt norm (or
Frobenius norm as in linear algebra). 
Bold letters, like $\opu$ and $\opg$ \etc, represent
matrices (linear operators). Also, $\Comm{\cdot}{\cdot}$ is commutator and 
$\Anticomm{\cdot}{\cdot}$ is anti-commutator.

\section{Stochastic dynamical low-rank approximation method}
\label{sec::sdlr}
As we recalled in the introduction, the dynamical low-rank
approximation method \cite{Lubich07}, developed for deterministic ODE
dynamics, involves the identification of an approximate sub-manifold
and projection onto the tangent space by solving a minimization
problem.  In this section, we will adopt this idea to formulate the
dynamical low-rank approximation in the space of finite signed
measures on $\hbt$ with bounded second moment.
This low-rank approximation method offers an abstract framework, for
instance, to approximate both \fp{} and \lb{} equations via low-rank
dynamics combined with the particle methods.  Thus the proposed method
will be named \emph{\thismethod{}} (\thismethodshort{}) method.  A
concrete example and corresponding low-rank dynamics will be given in
\secref{sec::example} below.

\subsection{Problem setup and low-rank approximation}

Consider the measure space $(\hbt, \mathcal{B})$, where $\mathcal{B}$
is the $\sigma$-algebra of Borel sets on $\hbt$.  Denote $\mani$ the
collection of finite signed measures with bounded second moment on this
measure space:
\begin{equation*}
  \mani := \biggl\{\, \mu \text{ is a finite signed measure}\ \bigg\vert \  \int \abs{x}^2\ \Abs{\mu}(\ud x)  < \infty \, \biggr\}\,,
\end{equation*}
where the positive measure $\Abs{\mu}$ is variation of measure $\mu$.

Consider a given differentiable trajectory $\mu_t\in \mani$ solving
\[\partial_t \mu_t = \gen_t \mu_t,\]
where $\gen_t: \mani \rightarrow \mani$ is a given time-dependent
(linear) operator. 
In the context of Fokker-Planck
equation, $\gen_t$ is the adjoint operator of the infinitesimal
generator of the corresponding SDE. In the context of \lb{} equation,
$\gen_t$ is the adjoint operator of the infinitesimal generator of the
SDE-type unraveling scheme of that \lb{} equation (see
\secref{sec::discuss::unravel}).

The low-rank approximation of $\mani$, denoted by $\mani_r$, is a
subset of $\mani$, which contains all measures in $\mani$ with support
on a $r$-dimensional linear subspace of $\hbt$.  Such low-rankness is
used to deal with the problem of high dimensionality of $\hbt$.  As a
remark, the low-rankness we explore here is not in the sense of taking
an ansatz of the measure in the space $\mani$ as a linear combination
of a few prescribed measures as a basis (which would be a usual
Galerkin approximation in the space of measures). Instead, the
low-rankness here means that the measure $\mu$ is mostly concentrated
on a $r$-dimensional linear subspace of $\hbt$, where $r \ll n$.
Intuitively, this approximation would work well for some dissipative
dynamics for which the measure is contracted to some low-dimensional
space as time evolves (see \secref{sec::numerics} for numerical
demonstration).

Let us characterize the structure of $\mani_r$. For any
$\mu_{\lr}\in \mani_r$, by definition, it is supported on a
$r$-dimensional linear subspace, whose orthonormal basis is denoted by
$\left\{U_1, U_2, \cdots, U_r \right\}$. Then one could define a
linear mapping $\opu: \Complex^r\rightarrow \hbt$ by
\begin{equation*}
\opu: y\in \Complex^r \rightarrow \sum_{j=1}^{r} U_j y_j, \qquad \text{ or in matrix form } \opu = \begin{bmatrix} U_1 & U_2 & \cdots & U_r \end{bmatrix}.
\end{equation*}
Let us denote the $r$-dimensional Stiefel manifold on $\hbt$ by
\begin{equation}
  \uspace :=  \left\{ \opu: \Complex^r \rightarrow \hbt \text{ is linear, and }\ \opu^{\dagger} \opu = 
 \id_{r\times r} \right\}.
\end{equation}
Then, $\mani_r$ could be viewed as a collection of
measures in $\mani$ with support on $\range(\opu)$ with bounded second moment, where $\range(\opu)$ is the range of some
linear operator $\opu \in \uspace$.  The restriction of $\mu_{\lr}$ on
$\text{Ran}(\opu)$ can be represented as a finite signed measure on
$\Complex^r$, given by the pullback
\[\theta(E) := \mu_{\lr}(\opu E) 
= \mu_{\lr}(x \in \range( \opu): x\in \opu E) = \mu_{\lr}(x \in \range( \opu): \opu^{\dagger} x\in E) 
\]
for any Borel set $E \subset \Complex^r$. 
Hence, for any measurable
set $F \subset \text{Ran}(\opu)\in \mathcal{B}$,
\begin{equation}
\mu_{\lr}(F) = \mu_{\lr} (\opu \opu^{\dagger} F) = \theta( \opu^{\dagger} F).
\end{equation}
Thus, there exists a one-to-one correspondence between $\mani_r$ and
$\uspace \oplus \manitheta$ where $\manitheta$ denotes the space of
finite signed measures on $\Complex^r$,
with bounded second moment. 
The low-rank dynamics
on $\mani_r$ that we shall consider is equivalent to the dynamics of a
pair $\bigl(\opu(t), \theta_t\bigr)\in \uspace \oplus \manitheta$.
%{\color{purple} Moreover, for continuous integrable function $g(x)$, $\int g(x)\ \mu_{\lr}(\ud x) = \int g(\opu y)\ \theta(\ud y)$ where $x\in \Complex^n$ and $y\in \Complex^r$.}

We remark that we use in the general framework finite signed measure
instead of probability measure to avoid the subtleties arising from
the geometry of probability measures (due to the positivity), see for
example \cite{Ambrosio, Lott17}. In practice, we will guarantee that
the resulting dynamics yields probability measure by imposing more
constraints on the low-rank approximation, see \secref{sec::example}.

\subsection{Tangent space projection}

It is well-known that the tangent space of $\uspace$ at $\opu \in \uspace$ is given by (see for example \cite[Theorem 1.2]{Higham96}),
\begin{equation}\label{eq:tangUV}
\tang_{\opu} \uspace = \left\{ i \opg \opu: \opg^{\dagger} = \opg \text{ is a linear operator on }\hbt \right\}.
\end{equation}
A self-contained proof is provided in \ref{sec::stiefel_tangent} for readers' convenience. 
Thus for a
differentiable trajectory $\opu(t)\in \uspace $, we have
$\frac{\rd}{\rd t} \opu(t) = i \opg(t) \opu(t)$ for some Hermitian matrices $\opg(t)$.

\reviewnumtwo{Consider} any differentiable trajectory $\theta_t$ that 
\begin{equation}\label{eq:thetaeqn}
  \partial_t \theta_t = \gen_{\theta,t} \theta_t, 
\end{equation}
where $\gen_{\theta,t}: \manitheta\rightarrow \manitheta$ is some
operator.
The tangent space of $\mani_{r}$ at $(\opu(t), \theta_t)$ is fully characterized by
\begin{equation}
  \tang_{\opu(t)} \uspace \oplus \tang_{\theta_t} \manitheta.
\end{equation}

It is straightforward to adopt the idea of tangent space projection (\ie,  \eqrefn{eqn::tsp_ode}) to our situation. Consider a natural \sdm{} on $\mani$
\begin{equation}
\label{eqn::metric}
d_{\funcsp} \left(\nu_1, \nu_2\right):= \sup_{f\in \funcsp} \Norm{\int f\ \ud \nu_1 - f\ \ud \nu_2 },\qquad \nu_{1,2} \in  \mani.
\end{equation}
Recall that $\funcsp$ is a collection of test functions and
$\Norm{\cdot}$ is some suitable norm associated with functions in
$\funcsp$. In \secref{sec::example}, we will choose
\[\funcsp = \left\{f = xx^{\dagger}\right\}\] 
be a singleton and the norm $\Norm{\cdot}$ is chosen as
Hilbert-Schmidt norm. An equivalent choice is
that
\[\funcsp = \Bigl\{\,f(x) = \Inner{x}{\opo x} \;\big\vert \;
\text{Hermitian matrix } \opo \text{ satisfies } \norm{\opo}_{\hs} \le
1\Bigr\},\]
and the norm is simply the absolute value. The equivalence of these
two choices is proved in Lemma~\ref{lemma::choice_equivalent_F} in
\ref{sec:equivalence}.  In fact, from the perspective of quantum mechanics,
this equivalence is natural, since finding a good approximation of
density matrix $\int f\ \ud\nu \equiv \ee_{\nu}\bigl[xx^{\dagger}\bigr]$
is equivalent to finding a good approximation of all observations as
$\Average{\opo}_{\text{avg}} := \ee_{\nu} \bigl[ \Inner{x}{\opo
  x}\bigr] = \tr\left( \ee_{\nu} \bigl[ \opo xx^{\dagger}\bigr]\right)
= \Inner{\opo}{\ee_{\nu}\bigl[xx^{\dagger}\bigr]}_{\hs}$ where observable $\opo$ is a Hermitian matrix.

The tangent space projection of
$\partial_t \mu_t = \gen_{t} \mu_t$ to the tangent space
$\tang_{\mu_{\lr,t}} \mani_r$ is then given by 
\begin{equation}
\label{eqn::sdlr_minimization}
\begin{split}
  \partial_t \mu_{\lr,t} \equiv \gen_{\lr,t} \mu_{\lr,t} := & \argmin_{\nu \in \tang_{\mu_{\lr,t}}\mani_r}  d_{\funcsp}\left(\nu,\ \gen_{t} \mu_{\lr,t}\right) \\
= & \argmin_{\wt{\gen}_{\lr,t}:\ \wt{\gen}_{\lr,t} \mu_{\lr,t} \in \tang_{\mu_{\lr,t}} \mani_r} \sup_{f\in \funcsp} \Norm{\int f\ \ud \bigl(\wt{\gen}_{\lr,t}\mu_{\lr, t}\bigr)  - f\ \ud \bigl(\gen_t \mu_{\lr,t}\bigr) }. \\
\end{split}
\end{equation}
In the second line, the minimization problem is reformulated from finding tangent vector $\nu$ to finding differential operators $\wt{\gen}_{\lr,t}$. Though the notation is slightly abused, the variational problem above should still be clear.

Equivalently, using the adjoint operators $\wt{\gen}_{\lr,t}^{*}$ and $\gen_{t}^{*}$, it can be written as 
\begin{equation}
\label{eqn::sdlr_minimization_equiv}
\begin{split}
 \partial_t \mu_{\lr,t} = & \argmin_{\wt{\gen}_{\lr,t}^{*}:\ \wt{\gen}_{\lr,t} \mu_{\lr,t} \in \tang_{\mu_{\lr,t}} \mani_r} \sup_{f\in \funcsp} \Norm{\int \bigl(\wt{\gen}_{\lr,t}^{*} f\bigr)\ \ud \mu_{\lr, t} - \bigl(\gen_t^{*} f\bigr)\ \ud \mu_{\lr,t}} \\
=& \argmin_{\wt{\gen}_{\lr,t}^{*}:\ \wt{\gen}_{\lr,t} \mu_{\lr,t} \in \tang_{\mu_{\lr,t}} \mani_r} \sup_{f\in \funcsp} \Norm{\ee_{\mu_{\lr,t}} \left[\wt{\gen}_{\lr,t}^{*} f - \gen_t^{*} f \right] }. \\
\end{split}
\end{equation}
In \secref{sec::example}, we shall parametrize the infinitesimal
generator $\wt{\gen}_{\lr,t}^{*}$ by some functions to simplify the
minimization problem and also to avoid the vagueness of generic
infinitesimal generator for general stochastic processes.

\section{SDLR method for SDEs with $N$ driving Brownian motions and test function $f(x) = x x^{\dagger}$ }
\label{sec::example}

Based on the framework of SDLR, we may explore various low-rank
dynamics. 
In this section, $\mu_t$ is considered to be the probability measure of
$X_t$ given in \eqrefn{eqn::sde} and thus
$\partial_t \mu_t = \gen_t \mu_t $ corresponds to the Fokker-Planck
equation of SDE $X_t$ in \eqrefn{eqn::sde}. The test function space
$\funcsp$ is taken to be a singleton with the only element
$f(x) = x x^{\dagger}$. We will assume these in the sequel without
explicit mentioning.

To apply the method developed in last section, we impose some further
restrictions on $\tang_{\mu_{\lr,t}} \mani_r$ and thus on the choice
of low-rank dynamics. We will make some further comments
on these restrictions after we derive the resulting low-rank dynamics
by applying \thismethodshort{} method.
\begin{enumerate}
\item In the tangent space of $\uspace$, for Hermitian matrix $\opg(t)$ in
  \eqrefn{eq:tangUV}, we \reviewnumtwo{consider only those $\opg(t)$ such that}
  \begin{equation}
  \label{eqn::assmp_g}
  \opg(t) = \projperp_{\opu(t)} \opg(t) \proj_{\opu(t)} + \hc,
  \end{equation} where projection operator
  $\proj_{\opu(t)} := \opu(t) \opu^{\dagger}(t)$ and orthogonal projection operator
  $\projperp_{\opu(t)} := \id - \proj_{\opu(t)}$.  
\item In the tangent space of $\manitheta$, \reviewnumtwo{consider only those $\gen_{\theta,t}$ corresponding}
 to the Fokker-Planck equation of some
  SDE on $\Complex^r$ (cf.~\eqrefn{eq:thetaeqn}). 
  This basically means that we parametrize the infinitesimal generator
  $\gen_{\lr,t}^{*}$ (and hence $\gen_{\lr,t}$) by a collection of
  functions. The exact form will be given below in
  Lemma~\ref{lemma::generator}. Because the space of all possible
  infinitesimal generators is opaque and quite large, hence, choosing
  infinitesimal generators of a particular form is necessary in
  practice.
\end{enumerate}

Following the second \reviewnumtwo{constraint} above, let us denote $Y_t(\omega)$ the
corresponding SDE on $\Complex^r$ whose infinitesimal generator is
$\gen_{\theta,t}$. Consider the following family of stochastic
dynamics with $\opg$, $A$ and $B$'s to be chosen
\begin{equation}\label{eqn::sdeY}
\begin{split}
  \dot{\opu}(t) &=  i \opg(t) \opu(t),\qquad 
  \text{with } \opg^{\dagger}(t) = \opg(t),\  \opg(t) = \projperp_{\opu(t)} \opg(t) \proj_{\opu(t)} + \hc  \\
  \ud Y &= A(Y,t) \ud t + \sum_{j=1}^{M} B_j(Y,t) \ud W_j. \\
  \end{split}
\end{equation}
Define $X_{\lr, t}(\omega) := \opu(t) Y_t(\omega)$ and denote
$\mu_{\lr, t}$ the probability measure induced by random variable
$X_{\lr,t}(\omega)$. It is straightforward to check that
$X_{\lr,t}(\omega)$ satisfies the SDE
\begin{equation}
  \ud X_{\lr} = a_{\lr}\left(X_{\lr}, t\right)\ud t + \sum_{j=1}^{M}b_{\lr,j} \left(X_{\lr}, t\right)\ud W_j,
  \label{eqn::sde_lowrank}
\end{equation}
where, for $1\le j\le M$, 
\begin{equation}
  \begin{cases}
    \begin{aligned}
      a_{\lr}(X_{\lr,t}, t) & =  
      i \opg(t) X_{\lr,t} + \opu(t) A(\opu^{\dagger}(t) X_{\lr,t}, t),
    \end{aligned} \\
    b_{\lr, j}(X_{\lr,t}, t)= \opu(t) B_j(\opu^{\dagger}(t) X_{\lr,t}, t).
  \end{cases}
\end{equation}
Note that $M$ does not have to be the same as $N$ and $\rd W_j$ in
\eqrefn{eqn::sdeY} are not necessarily the same Brownian motions in
\eqrefn{eqn::sde}; we use $\rd W_j$ for both to save notation. Since
we are interested in the error in the weak sense, how the randomness is
achieved does not matter; what is important is the infinitesimal
generator which does not depend on the particular realization of the
Brownian motion.

\medskip

\begin{lemma}
\label{lemma::generator}
The adjoint infinitesimal generator, acting on $f(x) = x x^{\dagger}$,
for SDE $X_t$ is
\[\begin{split}
\bigl(\gen^{*}_t f\bigr)(x) &=
x a^{\dagger}(x,t) + a(x,t) x^{\dagger} +  \sum_{j=1}^{N} b_j(x,t) b_j^{\dagger}(x,t). \\
\end{split}\]
Hence for low-rank dynamics $\mu_{\lr,t}$, $\gen_{\lr,t}^{*}$ is a
family of generators parametrized by $\opg(t)$, $A(y)$ and $B_j(y)$
($1\le j\le M$), with the following form
\[\begin{split}
 \bigl(\gen_{\lr,t}^{*} f\bigr)(x) &
= x \left( i \opg(t) x + \opu(t) A( \opu^{\dagger}(t) x, t)\right)^{\dagger} + \left( i \opg(t) x + \opu(t) A( \opu^{\dagger}(t) x, t)\right) x^{\dagger} \\
& \qquad + \sum_{j=1}^{M} \opu(t) B_j( \opu^{\dagger}(t) x, t) B_j^{\dagger}(\opu^{\dagger}(t) x, t) \opu^{\dagger}(t). \\
\end{split}\]
\end{lemma}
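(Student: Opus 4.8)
The plan is to compute the adjoint generator directly from Itô calculus applied to the matrix-valued function $f(x) = x x^{\dagger}$, and then specialize to the low-rank SDE \eqref{eqn::sde_lowrank}. First I would recall that for an SDE $\ud X_t = a(X_t,t)\ud t + \sum_{j=1}^N b_j(X_t,t)\ud W_j$ on $\hbt$, the infinitesimal generator $\gen_t^{*}$ acts on a (sufficiently smooth, matrix-valued) test function $f$ by the usual formula $\gen_t^{*} f = \sum_k a_k \partial_{x_k} f + \frac12 \sum_{j}\sum_{k,\ell} (b_j)_k \overline{(b_j)_\ell}\,\partial_{x_k}\partial_{\bar x_\ell} f + \hc$-type terms, with the appropriate Wirtinger-derivative bookkeeping since $\hbt \cong \Complex^n$ carries both holomorphic and antiholomorphic directions. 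The cleanest route is to apply the Itô formula componentwise to $(f(X_t))_{pq} = (X_t)_p \overline{(X_t)_q}$: the drift terms contribute $a_p(x,t)\overline{x_q} + x_p\,\overline{a_q(x,t)}$, which in matrix form is $a(x,t)x^{\dagger} + x\,a^{\dagger}(x,t)$, and the quadratic-variation term contributes $\sum_{j=1}^N (b_j)_p(x,t)\,\overline{(b_j)_q(x,t)}$ (the cross terms $(X_t)_p$ with $(X_t)_q$ each carry one Brownian increment, and $\ud W_i\,\ud W_j = \delta_{ij}\ud t$), which in matrix form is $\sum_j b_j(x,t)b_j^{\dagger}(x,t)$. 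Summing gives exactly the claimed expression for $\gen_t^{*} f$.

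For the second assertion I would simply instantiate this formula for the SDE \eqref{eqn::sde_lowrank}, i.e., replace $a$ by $a_{\lr}$ and $b_j$ by $b_{\lr,j}$, with $M$ Brownian motions in place of $N$. Substituting $a_{\lr}(x,t) = i\opg(t)x + \opu(t)A(\opu^{\dagger}(t)x,t)$ into $a_{\lr}(x,t)x^{\dagger} + x\,a_{\lr}^{\dagger}(x,t)$ reproduces the first two terms of the stated formula, and substituting $b_{\lr,j}(x,t) = \opu(t)B_j(\opu^{\dagger}(t)x,t)$ into $\sum_{j=1}^M b_{\lr,j}(x,t)b_{\lr,j}^{\dagger}(x,t)$ gives $\sum_{j=1}^M \opu(t)B_j(\opu^{\dagger}(t)x,t)B_j^{\dagger}(\opu^{\dagger}(t)x,t)\opu^{\dagger}(t)$, using $(\opu(t)B_j)^{\dagger} = B_j^{\dagger}\opu^{\dagger}(t)$. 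This is purely algebraic once the general formula is in hand.

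The only genuine subtlety — and the step I would treat with care — is the complex Itô calculus: since $f(x)=xx^{\dagger}$ involves both $x$ and $\bar x$, one must be careful that the second-order term picks up $\partial_x\partial_{\bar x}$ contributions but no $\partial_x\partial_x$ or $\partial_{\bar x}\partial_{\bar x}$ contributions, because each coordinate of $X_t$ appears in $(X_t)_p\overline{(X_t)_q}$ to first order only and the $b_j$ are $\hbt$-valued (not mixing holomorphic/antiholomorphic parts artificially). Equivalently, writing everything in real coordinates $X_t = (\Re X_t, \Im X_t) \in \RR^{2n}$ and applying the standard real Itô formula, then repackaging into complex matrix notation, avoids any ambiguity; I expect this repackaging to be the main bookkeeping obstacle, though not a deep one. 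A further mild point worth a remark is that $f(x)=xx^{\dagger}$ is unbounded, so strictly speaking $\gen_t^{*}f$ should be understood in the sense of the extended generator (or one restricts to $\mu_t$ with bounded second moment, as is done throughout via the space $\mani$), which justifies the formal computation; I would note this but not belabor it.
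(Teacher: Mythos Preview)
Your proposal is correct and follows exactly the approach the paper indicates: apply the \ito{} formula to $f(x)=xx^{\dagger}$ to obtain $\gen_t^{*}f$, and then substitute the low-rank coefficients $a_{\lr}$, $b_{\lr,j}$ from \eqref{eqn::sde_lowrank}. Your discussion of the complex It\^o bookkeeping and the unboundedness of $f$ goes beyond what the paper records (it dismisses the proof as ``straightforward by applying \ito{} formula''), but is entirely appropriate.
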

Note that $\gen_{\lr,t}^{*}$ depends on the current $\opu(t)$, which
is natural. The proof is straightforward by applying \ito{} formula to
$f(x) = xx^{\dagger}$.

We are now ready to apply \eqrefn{eqn::sdlr_minimization} and
\eqref{eqn::sdlr_minimization_equiv} to find the time-evolution of the
optimal low-rank dynamics $\mu_{\lr,t}$.

\medskip 

\begin{theorem}\label{thm::fx=xx}
Assume that $\int f(y)\ \theta_t(\ud y) \equiv \ee_{Y_t\sim \theta_t} \bigl[Y_t Y^{\dagger}_t\bigr]$ is an invertible $r\times r$ matrix for any $t\in \tm$, then 
\begin{enumerate}[(i)]
\item The following choices of $\opu(t)$ and $\theta_t$ give an
  optimal low-rank dynamics 
\begin{equation}
\label{eqn::low_rank_xx}
\begin{cases} 
M  = N\\
A(y, t) =  \opu^{\dagger}(t) a(\opu(t) y, t)  \\
B_j(y, t) = \opu^{\dagger}(t) b_j(\opu(t) y, t) \\
\opg(t) = (-i) \projperp_{\opu(t)} \biggl( \ee_{\theta_t} \bigl[a(\opu(t) y, t) (\opu(t) y)^{\dagger}\bigr] \\ \hspace{10em} + \sum_{j=1}^{N} \ee_{\theta_t}\bigl[b_j(\opu(t) y, t) b_j^{\dagger}(\opu(t) y, t) \bigr] \biggr) \opu(t) \ee_{\theta_t}[yy^{\dagger}]^{-1} \opu^{\dagger}(t) + \hc
\end{cases}
\end{equation}

\item The solution is optimal in the sense that, for any given $\opu(t)$
  and $\theta_t$,
  \begin{equation*}
    \begin{aligned}
      \min_{\wt{\gen}_{\lr,t}}\; d_{\funcsp}\bigl(\wt{\gen}_{\lr,t} \mu_{\lr,t},\
      {\gen}_{t} \mu_{\lr,t} \bigr) & =
      d_{\funcsp}\bigl( {\gen}_{\lr,t}
      \mu_{\lr,t},\ \gen_{t} \mu_{\lr,t} \bigr)\\
      & = \Norm{\sum_{j=1}^{N} \projperp_{\opu(t)} \ee_{\theta_t}
        \bigl[b_j(\opu(t)y,t) b_j^{\dagger}(\opu(t)y,t)\bigr]
        \projperp_{\opu(t)}}_{\hs},
    \end{aligned}
  \end{equation*}
  where the adjoint of ${\gen}_{\lr,t}$ (\ie,
  ${\gen}^{*}_{\lr,t}$) is given by
  \eqrefn{eqn::low_rank_xx}, $\wt{\gen}_{\lr,t}$ is any operator acting on
  probability measures whose corresponding infinitesimal generator
  $\wt{\gen}_{\lr,t}^{*}$ is parametrized by $A$, $B_j$ and $\opg$ as in Lemma
  \ref{lemma::generator}. 

\item Time-evolution equation of $\opu(t)$ and $\theta_t$ is given by the
  following system:
  \begin{equation}\left\{
      \label{eqn::low_rank_xx_v2}
      \begin{split}
        \ud Y &= \opu^{\dagger}(t) a(\opu(t) Y, t)\ \ud t 
        + \sum_{j=1}^{N} \opu^{\dagger}(t) b_j(\opu(t) Y, t)\ \ud W_j\,,\\
        \frac{\ud \opu}{\ud t} &= \projperp_{\opu(t)} \left(\ee\bigl[a(\opu(t) Y, t) Y^{\dagger}\bigr] + \sum_{j=1}^{N} \ee\bigl[b_j(\opu(t) Y, t) b_j^{\dagger}(\opu(t) Y, t)\bigr] \opu(t) \right) \ee\bigl[Y Y^{\dagger}\bigr]^{-1}\,. \\
      \end{split}\right.
  \end{equation}
  If rank $r = n$, then $\projperp_{\opu(t)} = 0$ and $\proj_{\opu(t)} = \id$. Consequently, $\frac{\ud \opu}{\ud t} = 0$ and the infinitesimal generator
  for SDE of $X_{\lr,t}$ is the same as that for $X_t$. That means,
  when full rank is used, the original SDE is recovered; equivalently the Fokker-Planck equation for $\mu_t$ is recovered.
\end{enumerate}

\end{theorem}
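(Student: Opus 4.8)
The plan is to convert the variational problem \eqref{eqn::sdlr_minimization_equiv} into an explicit minimization over the parameters $\opg(t)$, $A$ and $B_j$ introduced in \eqref{eqn::sdeY}, and to solve it by an orthogonal block decomposition. Since $\funcsp=\{f=xx^{\dagger}\}$ is a singleton, the \sdm{} collapses to a single Hilbert--Schmidt distance, so the objective becomes $\norm{S-T}_{\hs}$ between two $n\times n$ Hermitian matrices, where the \emph{target} is $T:=\ee_{\mu_{\lr,t}}\bigl[\gen_t^{*}f\bigr]=\int f\ \ud(\gen_t\mu_{\lr,t})$ and the \emph{trial} matrix is $S:=\ee_{\mu_{\lr,t}}\bigl[\wt{\gen}_{\lr,t}^{*}f\bigr]$. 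Using $X_{\lr,t}=\opu(t)Y_t$ with $Y_t\sim\theta_t$ (so $\opu^{\dagger}(t)X_{\lr,t}=Y_t$), Lemma~\ref{lemma::generator} gives $T=\ee_{\theta_t}\bigl[(\opu y)a^{\dagger}(\opu y)+a(\opu y)(\opu y)^{\dagger}+\sum_{j=1}^{N}b_j(\opu y)b_j^{\dagger}(\opu y)\bigr]$ and a completely analogous expansion of $S$ in terms of $\opg$, $A$ and $B_j$. The problem is thus $\min_{\opg,A,B_j,M}\norm{S-T}_{\hs}$ with $\opg$ constrained by \eqref{eqn::assmp_g}.

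Next I decompose every $n\times n$ matrix into its four blocks relative to the orthogonal projectors $\proj:=\proj_{\opu(t)}$ and $\projperp:=\projperp_{\opu(t)}$. These blocks are pairwise orthogonal for the Hilbert--Schmidt inner product, so $\norm{S-T}_{\hs}^{2}$ splits as a sum of four block contributions, of which only three are independent by Hermiticity. The structural facts driving the argument are: because $\opg=\projperp\opg\proj+\hc$ and $\opu y\in\range(\opu)$, the vector $\opg(\opu y)=\projperp\opg\proj(\opu y)$ lies in $\range(\opu)^{\perp}$, so the terms $i\opg(\opu y)(\opu y)^{\dagger}$ and its h.c.\ populate only the $\projperp\proj$ and $\proj\projperp$ blocks of $S$; meanwhile $\opu A(y)(\opu y)^{\dagger}$, its h.c.\ and $\opu B_j(y)B_j^{\dagger}(y)\opu^{\dagger}$ populate only the $\proj\proj$ block. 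Hence the three controllable blocks decouple and can be optimized separately.

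I then solve block by block. The $\projperp\projperp$ block of $S$ is forced to be $0$, so it contributes the irreducible error $\norm{\projperp T\projperp}_{\hs}$; since $\projperp\opu=0$ kills the $a$ and $x a^{\dagger}$ terms, this equals $\bigl\lVert\sum_{j}\projperp\,\ee_{\theta_t}[b_j(\opu y)b_j^{\dagger}(\opu y)]\,\projperp\bigr\rVert_{\hs}$, the value claimed in (ii). The $\projperp\proj$ block of $S$ is $i\opg\,\opu\,\ee_{\theta_t}[yy^{\dagger}]\,\opu^{\dagger}$; since $\ee_{\theta_t}[yy^{\dagger}]$ is invertible and $\opu$ has full column rank, one may cancel $\opu^{\dagger}$ on the right, solve $i\opg\opu\,\ee_{\theta_t}[yy^{\dagger}]=\projperp T\opu$ for $\opg\opu$, and recover the full $\opg$ from $\opg=\projperp\opg\proj+\hc$; this yields exactly the formula for $\opg(t)$ in \eqref{eqn::low_rank_xx}, and the $\proj\projperp$ block is then the h.c.\ and matches automatically. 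The $\proj\proj$ block of $S$ is $\opu\,\ee_{\theta_t}\bigl[yA^{\dagger}(y)+A(y)y^{\dagger}+\sum_j B_j(y)B_j^{\dagger}(y)\bigr]\opu^{\dagger}$, and the choice $A(y,t)=\opu^{\dagger}a(\opu y,t)$, $B_j(y,t)=\opu^{\dagger}b_j(\opu y,t)$, $M=N$ makes the inner expectation equal $\opu^{\dagger}T\opu$ (using $y=\opu^{\dagger}\opu y$), i.e.\ makes this block equal the $\proj\proj$ block of $T$ exactly. Combining the three, all controllable blocks vanish and $\min\norm{S-T}_{\hs}=\norm{\projperp T\projperp}_{\hs}$, which gives (i) and (ii). Part (iii) follows by substituting the chosen parameters into \eqref{eqn::sdeY}: the $Y$-equation is immediate, and in $\dot{\opu}=i\opg\opu$ only the $\projperp\opg\proj$ half of $\opg$ survives (since $\projperp\opu=0$), giving \eqref{eqn::low_rank_xx_v2} after using $(\opu y)^{\dagger}\opu=y^{\dagger}$; the case $r=n$ is then immediate from $\projperp=0$, $\proj=\id$.

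I expect the main obstacle to be the bookkeeping in the off-diagonal and $\proj\proj$ blocks: one must verify that the parametrization is simultaneously rich enough to annihilate both the $\projperp\proj$ and $\proj\proj$ blocks and that the two choices do not interfere — which holds precisely because the $A,B_j$ terms never leave the $\proj\proj$ block while the $\opg$ term never enters it — and one must check that the invertibility hypothesis on $\ee_{\theta_t}[yy^{\dagger}]$ is exactly what makes the $\opg$-subproblem solvable under the structural constraint \eqref{eqn::assmp_g}.
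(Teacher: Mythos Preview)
Your proposal is correct and rests on the same underlying structure as the paper's proof --- the orthogonal block decomposition of the Hermitian error matrix with respect to $\proj_{\opu(t)}$ and $\projperp_{\opu(t)}$, together with the observation that the $\opg$-contribution lives only in the off-diagonal blocks while the $A,B_j$-contributions live only in the $\proj\proj$ block. The difference is mainly one of presentation order: the paper first writes down the Euler--Lagrange (first-order stationary) conditions in $\opg$, $A$, $B_j$, uses the $\opg$-condition to solve for $\opg(t)$, and only after substituting this back notices that the residual $\opc$ splits as $\proj\opc\proj+\projperp\opc\projperp$ with the second piece uncontrollable. You instead exploit the Hilbert--Schmidt orthogonality of the four $\proj/\projperp$ blocks from the outset and solve each block exactly, bypassing the variational calculus entirely. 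Your route is slightly more elementary and makes the decoupling of $\opg$ from $(A,B_j)$ transparent a priori rather than a posteriori; the paper's route, on the other hand, makes the uniqueness of $\opg$ under the constraint \eqref{eqn::assmp_g} more explicit (it emerges as the unique solution of the stationary condition). Both arrive at the identical formulas and the same irreducible residual $\bigl\lVert\projperp_{\opu(t)}\sum_j\ee_{\theta_t}[b_jb_j^{\dagger}]\projperp_{\opu(t)}\bigr\rVert_{\hs}$.
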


\begin{remark}
  Since the time evolution of $\opu(t)$ and $\theta_t$ can be fully
  recovered from solving ODE-SDE coupled system of $Y$ and $\opu$
  (with multiple replica of $Y$), we shall refer
  \eqrefn{eqn::low_rank_xx_v2} as the resulting dynamical low-rank
  approximation as well when no confusion arises.  After all, we will
  not solve equation
  $\partial_t \mu_{\lr,t} = \gen_{\lr,t} \mu_{\lr,t}$ directly;
  instead, we shall use Monte Carlo method, \ie, solving
  \eqrefn{eqn::low_rank_xx_v2} to estimate $\mu_{\lr,t}$ by the
  empirical measure.
\end{remark}

\begin{proof}
  The main idea of the proof is to use stationary conditions with
  respect to $\opg$, $A$ and $B_j$ to derive the low-rank
  dynamics. Recall that $\proj_{\opu} := \opu \opu^{\dagger}$ and
  $\projperp_{\opu} := \id - \opu \opu^{\dagger}$. Fix time $t$, by
  \eqrefn{eqn::sdlr_minimization} and \eqref{eqn::sdlr_minimization_equiv}, we know
\[
\begin{split}
 d_{\funcsp}\bigl(\gen_{\lr,t} \mu_{\lr,t},\ \gen_{t} \mu_{\lr,t} \bigr) 
= \Norm{\ee_{\mu_{\lr,t}} \bigl[\gen_{\lr,t}^{*} f - \gen_{t}^{*} f \bigr] }_{\hs} 
=: \Norm{\opc}_{\hs}, \\
\end{split}
\]
where
\[\begin{split}
 \opc &:=\ee_{\mu_{\lr,t}}\left[x (i \opg(t) x + \opu(t) A(\opu^{\dagger}(t) x, t) - a(x,t))^{\dagger} + \hc \right] \\
 & \qquad + \ee_{\mu_{\lr,t}}\left[ \sum_{j=1}^{M} 
 \opu(t) B_j( \opu^{\dagger}(t) x, t)   
 B_j^{\dagger}(\opu^{\dagger}(t) x, t) 
 \opu^{\dagger}(t)  
 - \sum_{j=1}^{N} b_j(x,t) b_j^{\dagger}(x,t)\right].\\
 \end{split}
\]
To minimize $d_{\funcsp}\bigl(\gen_{\lr,t} \mu_{\lr,t},\ \gen_{t} \mu_{\lr,t} \bigr)$, it is equivalent to minimize $\Norm{\opc}_{\hs}^2 = \Inner{\opc}{\opc}_{\hs}$. The first order stationary conditions of $\Inner{\opc}{\opc}_{\hs}$ with respect to $A$, $B_j$ ($1\le j \le M$) and $\opg(t)$ gives
\begin{equation}
\label{eqn::statn_codn}
\left\{
\begin{split}
0 &= \Inner{\opu^{\dagger}(t) \opc \opu(t)}{\ \ee_{\theta_t}\bigl[(\delta A) y^{\dagger} + y (\delta A)^{\dagger}\bigr]}_{\hs} \\
0 &= \Inner{\opu^{\dagger}(t) \opc \opu(t)}{\ \ee_{\theta_t}\bigl[(\delta B_j) B_j^{\dagger} + B_j (\delta B_j)^{\dagger}\bigr]}_{\hs} \\
0 &= \tr\left( \Comm{\opc}{ \ee_{\mu_{\lr,t}}[ x x^{\dagger}]} \delta \opg \right),
\end{split}\right.
\end{equation}
where $\delta A$ and $\delta B_j$ are perturbations of functions $A$ and $B_j$ respectively and $\delta \opg$ is a Hermitian matrix as perturbation of $\opg$ and $\Comm{\cdot}{\cdot}$ is the commutator. 

Consider the third condition in \eqrefn{eqn::statn_codn}. Let
us complete the basis of $\hbt$ by extending $\text{Ran}(\opu(t))$,
denoted by $\{U_1, U_2, \cdots, U_r, U_{r+1}, \cdots \cdots\}$. Since
$\delta \opg$ is arbitrary among all possible perturbations, consider
the special choice
$\delta \opg = \lambda U_j \Inner{U_k}{\cdot} + \hc$ where $j \le r$
and $k > r$. Note that when both $j, k \le r$ or $j, k > r$, by
\reviewnumtwo{our restriction in \eqrefn{eqn::assmp_g}}, $\Inner{U_j}{\opg(t) U_k} = 0$.  Hence as the perturbation
of $\opg$, $\delta \opg$ should preserve this property.  That means,
we cannot choose $\delta \opg$ with nonzero entries for $j, k \le r$
nor $j, k > r$.  Denote
$\opd \equiv \bigl[\opc, \ee_{\mu_{\lr,t}}[ x x^{\dagger}]\bigr]_{-}$,
which is anti-Hermitian, that is, $\opd^{\dagger} = - \opd$. Plugging the expression of $\delta \opg$ into the third condition in \eqrefn{eqn::statn_codn}, 
we could easily compute that
\[0 = \lambda \Inner{U_k}{\opd U_j} + \lambda^{*} \Inner{U_j}{\opd U_k} = \lambda \Inner{U_k}{\opd U_j} + \lambda^{*} \Inner{U_k}{\opd^{\dagger} U_j}^{*} = \lambda \Inner{U_k}{\opd U_{j}}  - c.c. \]
Hence $\text{Im}\left(\lambda \Inner{U_k}{\opd U_j}\right) = 0$. 
Since $\lambda\in \Complex$ is arbitrary, $\Inner{U_k}{ \opd U_j} = 0$ for all $j\le r$ and $k > r$.  
That is to say, $\projperp_{\opu(t)} \opd \proj_{\opu(t)} = 0$. By plugging the expression of $\opd\equiv \bigl[\opc, \ee_{\mu_{\lr,t}}[ x x^{\dagger}]\bigr]_{-}$, 
we could compute that 
\[ \begin{split} 
0 
&= \projperp_{\opu(t)} \opc  \opu(t) \ee_{\theta_t} \bigl[yy^{\dagger}\bigr] \opu^{\dagger}(t)\\
&= \ee_{\theta_t} \Biggl[ i \projperp_{\opu(t)} \opg(t) \opu(t) y y^{\dagger} 
- \projperp_{\opu(t)} a(\opu(t)y,t) y^{\dagger}\\
& \hspace{4em} -\projperp_{\opu(t)}\sum_{j=1}^{N} b_j(\opu(t)y,t) b_j^{\dagger}(\opu(t)y,t) \opu(t) \Biggr] \ee_{\theta_t}\bigl[yy^{\dagger}\bigr] \opu^{\dagger}(t) \\
&= i \projperp_{\opu(t)} \opg(t) \opu(t) \ee_{\theta_t} \bigl[y y^{\dagger}\bigr]^2 \opu^{\dagger}(t) 
- \projperp_{\opu(t)} \ee_{\theta_t} \bigl[a(\opu(t)y,t) y^{\dagger}\bigr] \ee_{\theta_t}\bigl[y y^{\dagger}\bigr]\opu^{\dagger}(t) \\
&- \projperp_{\opu(t)} \sum_{j=1}^{N} \ee_{\theta_t}\bigl[ b_j(\opu(t)y,t) b_j^{\dagger}(\opu(t)y,t) \bigr] \opu(t) \ee_{\theta_t}[ y y^{\dagger}] \opu^{\dagger}(t).
 \end{split}\]
Multiply both sides by $\opu(t)\ee_{\theta_t}\bigl[yy^{\dagger}\bigr]^{-2}$ on the right, the last equation yields
\[i \projperp_{\opu(t)} \opg(t) \opu(t)=\projperp_{\opu(t)} \left( \ee_{\theta_t} \bigl[ a(\opu(t)y,t)  y^{\dagger}\bigr] + \sum_{j=1}^{N} \ee_{\theta_t} \bigl[b_j(\opu(t)y,t) b_j^{\dagger}(\opu(t)y,t) \bigr] \opu(t)\right) \ee_{\theta_t}[y y^{\dagger}]^{-1}. \]
Then multiply $\opu^{\dagger}(t)$ on the right, and then divide both side by $i$, one could obtain
\[\begin{split}
 \projperp_{\opu(t)} \opg(t) \proj_{\opu(t)} 
 =&  (-i) \projperp_{\opu(t)} \left( \ee_{\theta_t} \bigl[a(\opu(t)y,t) (\opu(t) y)^{\dagger}\bigr] + \sum_{j=1}^{N} \ee_{\theta_t} \bigl[b_j(\opu(t)y,t) b_j^{\dagger}(\opu(t)y,t) \bigr] \right) \opu(t) \ee_{\theta_t}[y y^{\dagger}]^{-1} \opu^{\dagger}(t).\\
 \end{split} \]
Thus we have already obtained the expression of $\opg(t) \equiv \projperp_{\opu(t)} \opg(t) \proj_{\opu(t)} + \hc$ (cf. \eqrefn{eqn::low_rank_xx}).
{ As a remark, up to here, we have not yet used any information nor assumption about $A$ and $B_j$. }

Then we plug the expression of $\opg(t)$ into $\opc$,
\[\begin{split}
\opc &= \proj_{\opu(t)} \ee_{\theta_t} \bigl[ \bigl(\opu(t) A(y,t) - a(\opu(t)y,t)\bigr)y^{\dagger} \opu^{\dagger}\bigr] + \hc\\
& \qquad + \sum_{j=1}^{M} \opu(t) \ee_{\theta_t} \bigl[B_j(y,t) B_j^{\dagger}(y,t)\bigr] \opu^{\dagger}(t)  \\
& \qquad - \sum_{j=1}^{N} \left( \proj_{\opu(t)} \ee_{\theta_t} \bigl[b_j(\opu(t)y,t) b_j^{\dagger}(\opu(t)y,t)\bigr] \proj_{\opu(t)} 
 + \projperp_{\opu(t)} \ee_{\theta_t} \bigl[b_j(\opu(t)y,t) b_j^{\dagger}(\opu(t)y,t)\bigr] \projperp_{\opu(t)}\right). \\
\end{split}
\]
One could observe that $\opc = \proj_{\opu(t)} \opc \proj_{\opu(t)} + \projperp_{\opu(t)} \opc \projperp_{\opu(t)}$ with
\[\left\{
\begin{split}
\proj_{\opu(t)} \opc \proj_{\opu(t)} &= \proj_{\opu(t)} \ee_{\theta_t} \bigl[ \bigl(\opu(t) A(y,t) - a(\opu(t)y,t)\bigr)y^{\dagger} \opu^{\dagger}\bigr] + \hc + \sum_{j=1}^{M} \opu(t) \ee_{\theta_t} \bigl[B_j(y,t) B_j^{\dagger}(y,t)\bigr] \opu^{\dagger}(t) \\
&  - \sum_{j=1}^{N}\proj_{\opu(t)} \ee_{\theta_t} \bigl[b_j(\opu(t)y,t) b_j^{\dagger}(\opu(t)y,t)\bigr] \proj_{\opu(t)}, \\
\projperp_{\opu(t)} \opc \projperp_{\opu(t)} &= - \sum_{j=1}^{N} \projperp_{\opu(t)} \ee_{\theta_t} \bigl[b_j(\opu(t)y,t) b_j^{\dagger}(\opu(t)y,t)\bigr] \projperp_{\opu(t)}.
\end{split}\right.\]
Notice that $\projperp_{\opu(t)} \opc \projperp_{\opu(t)}$ does not depend on any parameter we choose, that is, it is independent of $A$, $B_j$ and $\opg(t)$. 
Recall that we would like to minimize 
\[\Inner{\opc}{\opc}_{\hs} 
= \Inner{\proj_{\opu(t)} \opc \proj_{\opu(t)}}{\proj_{\opu(t)} \opc \proj_{\opu(t)}}_{\hs} 
+ \inner{\projperp_{\opu(t)} \opc \projperp_{\opu(t)}}{\projperp_{\opu(t)} \opc \projperp_{\opu(t)}}_{\hs}.\] The second term is non-negative and we cannot minimize it further. As for the first term, by choosing 
\[\left\{
\begin{split}
M &= N, \\
A(y,t) &=  \opu^{\dagger}(t) a(\opu(t) y, t), \\
B_j(y,t) &= \opu^{\dagger}(t) b_j(\opu(t)y,t), \\
\end{split}\right.
\]
one could easily verify that $\proj_{\opu(t)} \opc \proj_{\opu(t)} = 0$. With such choice, 
\[\Norm{\opc}_{\hs} = \Norm{\sum_{j=1}^{N} \projperp_{\opu(t)} \ee_{\theta_t} \bigl[b_j(\opu(t)y,t) b_j^{\dagger}(\opu(t)y,t)\bigr] \projperp_{\opu(t)}}_{\hs}.\] 
Hence the above choice must be optimal (although it does not imply
uniqueness). 
However, the choice of $\opg$ is indeed unique under \reviewnumtwo{our restriction}.
One could straightforwardly verify that this solution
satisfies the first two equations in the first order stationary
condition (\ie, \eqrefn{eqn::statn_codn}); that is to say, the above
choice yields $\opu^{\dagger}(t) \opc \opu(t) = 0$. Though we don't
have to use first two parts in \eqrefn{eqn::statn_codn} to derive
low-rank dynamics, it is still nice to observe the consistency.

Thus we have proved the first and second part of this theorem. The third part follows easily from the first part.
\end{proof}

Let us come back to the two restrictions we made at the beginning of
this section.
\begin{itemize}
\item The reason to impose the condition
  $\opg(t) = \projperp_{\opu(t)} \opg(t) \proj_{\opu(t)} + \hc$ is to
  remove the redundant degree of freedom: It is easy to observe that $\projperp_{\opu(t)} \opg(t) \projperp_{\opu(t)}$ is redundant and does not play any role in $\dot{\opu}(t)$. 
Thus, we might as well let $\projperp_{\opu(t)} \opg(t) \projperp_{\opu(t)} \equiv 0$.
Further if we assume $\proj_{\opu(t)} \opg(t)\proj_{\opu(t)} \equiv 0$, then it directly implies that $\opu^{\dagger}(t) \dot{\opu}(t) = 0$, which is used as orthogonal constraint in \cite{Lubich07}. Conversely, if $\opu^{\dagger}(t) \dot{\opu}(t) = 0$, then $\proj_{\opu(t)} \opg(t) \proj_{\opu(t)} = 0$, which shows that such constraint is similar to the constraint $\opu^{\dagger}(t) \dot{\opu}(t) = 0$ for matrix ODEs. 

Another reason comes from the above proof. With such constraint, the first order stationary condition with respect to $\delta \opg$ yields a unique expression for $\opg(t)$. This indicates that redundancy has been removed via the above constraint.

\item The main reason for imposing constraint in tangent space of $\manitheta$ is that Fokker-Planck type generator automatically helps to preserve the positivity of measure. In fact,  the whole tangent space is rather big and too opaque to handle,
  since it might involve generators for other stochastic processes,
  \eg, jump processes.  The mixture of jump process and diffusion makes
  it more challenging to derive a simple low-rank dynamics; which
  could be an interesting future research direction.
\end{itemize}

\medskip

\begin{remark}
  It is a good place to compare our approach with the dynamical
  orthogonal (DO) method \cite{Sapsis09}. Using the current notations,
  the ansatz in DO method is taken to be 
\begin{equation}
X_{\lr,t}(\omega) = \bar{X}_t + \opu(t) Y_t(\omega),
\label{eqn::do_assmp}
\end{equation}
where $\bar{X}_t$ is deterministic and for all $t$, $\ee[Y_t(\omega)]= 0$, $\opu^{\dagger}(t) \dot{\opu}(t) = 0$, $\opu^{\dagger}(t) \opu(t) = \id$. By re-deriving the low-rank dynamics following the proof in that paper, one could obtain that
\begin{equation}
\label{eqn::do}
\begin{split}
  \frac{\ud}{\ud t}\bar{X} &= \ee\bigl[a(X_{\lr,t},t)\bigr] \\
  \dot{\opu} &= \projperp_{\opu(t)} \ee\bigl[a(X_{\lr,t},t) Y^{\dagger}\bigr] \ee\bigl[Y Y^{\dagger}\bigr]^{-1}\\
  \ud Y &= \opu^{\dagger}(t) \left(a(X_{\lr,t},t) - \ee\bigl[a(X_{\lr,t},t)\bigr]\right) \ud t + \sum_{j=1}^{N} \opu^{\dagger}(t) b_j(X_{\lr,t},t)\ud W_j.\\
\end{split}
\end{equation}
Compared with \eqrefn{eqn::low_rank_xx_v2}, the expressions of
time-evolution of $Y$ and $\opu(t)$ are almost the same except: (1) the extra term
$\opu^{\dagger}(t) \ee[a(X_{\lr,t},t)]$ in DO method, due to the
zero-mean constraint in $Y$; and (2) the non-trivial difference that
in \eqrefn{eqn::low_rank_xx_v2}, we have a term
\[\sum_{j=1}^{N} \ee[b_j(X_{\lr,t},t) b_j^{\dagger}(X_{\lr,t},t)]
\opu(t),\]
which could be understood as the \ito{} correction term due to the
second moment. Some numerical experiments comparing \thismethodshort{}
method and DO method will be presented in \secref{sec::numerics} for
high-dimensional geometric Brownian motion and stochastic Burgers'
equation.
\end{remark}

%------------------------------------------------------------------%
% Error analysis
%------------------------------------------------------------------%

\section{Error analysis of the stochastic dynamical low-rank approximation}
\label{sec::error}

In this section, we provide some error analysis for the low-rank dynamics that
we derive in \secref{sec::example}.
\thmref{thm::recover} indicates that the low-rank dynamics
in \eqrefn{eqn::low_rank_xx} (or \eqrefn{eqn::low_rank_xx_v2}) is optimal
under our ansatz, in the sense that if $X_t$ is itself a SDE, whose range is
supported on a rank $r$ subspace, then $X_{\lr,t} = X_t$ by choosing rank-$r$ low-rank dynamics \addchange{with some additional assumptions.}
In other words, the
low-rank dynamics we derive is consistent.  
We also prove an inequality to bound
error propagation in \thmref{thm::bound}  based on \gwieq.

\subsection{Consistency: recovering low-rank dynamics}
\label{subsec::recover}

\begin{lemma}
	\label{lemma::vanish_diffusion}
	Suppose $\mu_t$ is the measure induced by $X_t$ which solves a SDE of
	the form as in \eqrefn{eqn::sde} 
	\begin{equation*}
	\ud X_t = a(X_t, t) \ud t + \sum_{j=1}^N b_j(X_t, t) \ud W_j,
	\end{equation*}
	with initial condition $X_0 \sim \mu_0$.  Assume that $\mu_t$ is
	supported on a $r$-dimensional linear subspace, whose basis forms a
	linear operator $\opv(t):\Complex^r\rightarrow \hbt$ 
	with
	$\opv(t)^{\dagger} \opv(t) = \id_{r\times r}$. 
	Then, for $x\in \ran(\opv(t))$,
	the coefficients of the SDE satisfy
	\begin{equation*}
	a(x,t) = \left(\frac{\ud}{\ud t} \proj_{\opv(t)} \right) x + \proj_{\opv(t)} a(x,t), \qquad \text{and} \qquad b_j(x,t) = \proj_{\opv(t)} b_j(x,t), \qquad \forall j,  
	\end{equation*}
	and hence
	\[\sum_{j=1}^{N} \projperp_{\opv(t)}  \ee_{\mu_t} \bigl[ b_j(x,t) b_j^{\dagger}(x,t) \bigr] \projperp_{\opv(t)} = 0.\]
\end{lemma}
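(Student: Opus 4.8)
The plan is to use the hypothesis that the law $\mu_t$ of $X_t$ is carried by $\ran(\opv(t))$ to write $X_t=\proj_{\opv(t)}X_t$, differentiate this identity, and then read off the stated constraints on $a$ and $b_j$ from uniqueness of the semimartingale decomposition. Implicit in the statement (through the symbol $\tfrac{\ud}{\ud t}\proj_{\opv(t)}$) is that $t\mapsto\opv(t)$, hence $t\mapsto\proj_{\opv(t)}=\opv(t)\opv(t)^{\dagger}$, is $C^1$; I will assume this. The support hypothesis says that for each fixed $t$ one has $\projperp_{\opv(t)}X_t=0$ almost surely. Since $X_t$ has continuous sample paths and $t\mapsto\projperp_{\opv(t)}$ is continuous, intersecting over rational times and using continuity upgrades this to: almost surely, $\projperp_{\opv(t)}X_t=0$ for \emph{all} $t\in\tm$, i.e.\ $X_t\equiv\proj_{\opv(t)}X_t$ pathwise.

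Next I would apply the integration-by-parts formula for \ito{} processes to $X_t=\proj_{\opv(t)}X_t$. Because $\proj_{\opv(t)}$ is deterministic and of finite (indeed absolutely continuous) variation in $t$, there is no \ito{} cross term, so substituting the SDE for $\ud X_t$ on the right-hand side,
\begin{equation*}
\begin{split}
\ud X_t &= \left(\frac{\ud}{\ud t}\proj_{\opv(t)}\right)X_t\,\ud t+\proj_{\opv(t)}\,\ud X_t\\
&= \left[\left(\frac{\ud}{\ud t}\proj_{\opv(t)}\right)X_t+\proj_{\opv(t)}a(X_t,t)\right]\ud t+\sum_{j=1}^{N}\proj_{\opv(t)}b_j(X_t,t)\,\ud W_j .
\end{split}
\end{equation*}
Comparing with the original equation $\ud X_t=a(X_t,t)\,\ud t+\sum_j b_j(X_t,t)\,\ud W_j$ and invoking uniqueness of the decomposition of an \ito{} process into a finite-variation part and Brownian stochastic integrals — equate the $\ud t$ terms, and match the integrands of each $\ud W_j$ via quadratic (co)variations — one gets, for Lebesgue-a.e.\ $t$ and almost surely,
\begin{equation*}
b_j(X_t,t)=\proj_{\opv(t)}b_j(X_t,t)\quad(1\le j\le N),\qquad a(X_t,t)=\left(\frac{\ud}{\ud t}\proj_{\opv(t)}\right)X_t+\proj_{\opv(t)}a(X_t,t).
\end{equation*}
Since $X_t\sim\mu_t$ and $\mu_t$ lives on $\ran(\opv(t))$, these identities hold for $\mu_t$-a.e.\ $x\in\ran(\opv(t))$, which is the pointwise conclusion of the lemma (if in addition $a(\cdot,t),b_j(\cdot,t)$ and $t\mapsto\opv(t)$ are continuous, a density/continuity argument extends them to every $x\in\ran(\opv(t))$; for the final identity below only the $\mu_t$-a.e.\ version is needed).

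For the last displayed equation: from $b_j(x,t)=\proj_{\opv(t)}b_j(x,t)$ we get $\projperp_{\opv(t)}b_j(x,t)=0$ for $\mu_t$-a.e.\ $x$, and since $\projperp_{\opv(t)}$ is a Hermitian projection,
\begin{equation*}
\projperp_{\opv(t)}\,b_j(x,t)b_j^{\dagger}(x,t)\,\projperp_{\opv(t)}=\bigl(\projperp_{\opv(t)}b_j(x,t)\bigr)\bigl(\projperp_{\opv(t)}b_j(x,t)\bigr)^{\dagger}=0
\end{equation*}
for $\mu_t$-a.e.\ $x$; taking $\ee_{\mu_t}$ and summing over $j$ yields $\sum_{j=1}^{N}\projperp_{\opv(t)}\ee_{\mu_t}[b_j(x,t)b_j^{\dagger}(x,t)]\projperp_{\opv(t)}=0$.

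I expect the only real obstacle to be the analytic bookkeeping rather than any single computation: making the differentiation of the support constraint in $t$ rigorous (the passage from ``for each $t$, a.s.'' to ``a.s., for all $t$''), and spelling out the regularity hypotheses — $C^1$ dependence of $\opv(t)$, and enough regularity of $a$ and $b_j$ for the uniqueness-of-decomposition step to be legitimate — which the statement already presupposes.
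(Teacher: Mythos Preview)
Your argument is correct and follows exactly the same route as the paper: write $X_t=\proj_{\opv(t)}X_t$, differentiate using that $\proj_{\opv(t)}$ is deterministic, and match drift and diffusion terms with the original SDE. You are simply more careful than the paper about the analytic bookkeeping (the ``for each $t$, a.s.'' versus ``a.s., for all $t$'' upgrade and the $\mu_t$-a.e.\ versus pointwise distinction), which the paper glosses over.
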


\begin{proof}
	Since the range of $X_t$ is $\ran(\opv(t))$ by assumption, $\opv(t) \opv(t)^{\dagger} X_t = X_t$. Take derivative for both sides (note that $\opv(t)$ is deterministic)
	\begin{equation}
	\label{eqn::recover_lowrank}
	\begin{split}
	\ud X_t &= \ud \bigl(\proj_{\opv(t)} X_t\bigr) 
	= \left(\frac{\ud}{\ud t}\proj_{\opv(t)}\right) X_t \ud t + \proj_{\opv(t)} \ud X_t \\
	&=  \left(\left(\frac{\ud}{\ud t}\proj_{\opv(t)}\right) X_t + \proj_{\opv(t)} a(X_t, t)\right) \ud t + \sum_{j=1}^{N} \proj_{\opv(t)} b_j(X_t, t) \ud W_j. \\
	\end{split}\end{equation}
	Matching it with the SDE of $X_t$, we obtain 
	$a(X_t, t) = \left(\frac{\ud}{\ud t}\proj_{\opv(t)}\right) X_t + \proj_{\opv(t)}
	a(X_t, t)$
	and $b_j(X_t, t) = \proj_{\opv(t)} b_j(X_t, t)$. The conclusion of the
	Lemma follows.
\end{proof}

\begin{theorem}
	\label{thm::recover}
	Besides the same assumptions of Lemma~\ref{lemma::vanish_diffusion},
	we further assume that 
	$\frac{\ud}{\ud t}\opv(t) = i \opf(t) \opv(t)$ with some Hermitian matrix
	$\opf(t)$ which satisfies 
	\begin{equation*}
	\opf(t) = \projperp_{\opv(t)} \opf(t) \proj_{\opv(t)} + \hc
	\end{equation*}
	Then if $\mu_0 = \mu_{\lr,0}$ and $\opv(0) = \opu(0)$, we have
	\begin{equation*}
	\mu_t = \mu_{\lr,t},\quad \text{and} \quad \opv(t) = \opu(t)
	\end{equation*}
	for all $t\in\tm$ as long as the low-rank dynamics in
	\eqrefn{eqn::low_rank_xx} exists (in particular,
	$\ee_{\theta_t}[yy^{\dagger}]$ remains invertible). 
\end{theorem}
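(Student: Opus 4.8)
The plan is to run the low-rank dynamics \eqref{eqn::low_rank_xx_v2} from the data induced by $(\opv(0),\mu_0)$ and to check that the genuine process $X_t$, once expressed in the moving frame $\opv(t)$, produces a solution of exactly that coupled ODE--SDE system; the conclusion then follows from uniqueness of solutions, which is what the hypothesis ``the low-rank dynamics exists'' (with $\ee_{\theta_t}[yy^\dagger]$ invertible) provides. Concretely, set $Z_t := \opv^\dagger(t) X_t$. Since $\mu_t$ is supported on $\ran(\opv(t))$ we have $\proj_{\opv(t)} X_t = X_t$ almost surely, hence $X_t = \opv(t) Z_t$, and $\mathrm{law}(Z_t)$ is the pullback of $\mu_t$ along $\opv(t)$ in the sense of \secref{sec::sdlr}; at $t=0$ this pullback equals $\theta_0$ because $\mu_0 = \mu_{\lr,0}$ and $\opv(0)=\opu(0)$.

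Step 1 (the $Z$-equation). Since $\opv(\cdot)$ is deterministic and $C^1$, $\ud Z_t = \dot{\opv}^\dagger(t) X_t\,\ud t + \opv^\dagger(t)\,\ud X_t$, with $\dot{\opv}^\dagger(t) = -i\,\opv^\dagger(t)\opf(t)$ by Hermiticity of $\opf(t)$. Writing $X_t = \proj_{\opv(t)} X_t$ and using the block structure $\opf(t) = \projperp_{\opv(t)}\opf(t)\proj_{\opv(t)} + \hc$ together with $\opv^\dagger(t)\projperp_{\opv(t)} = 0$, the first term drops: $\opv^\dagger(t)\opf(t) X_t = \opv^\dagger(t)\projperp_{\opv(t)}\opf(t)\proj_{\opv(t)} X_t = 0$. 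Substituting \eqref{eqn::sde} and $X_t = \opv(t)Z_t$ then gives $\ud Z_t = \opv^\dagger(t) a(\opv(t)Z_t,t)\,\ud t + \sum_{j=1}^N \opv^\dagger(t) b_j(\opv(t)Z_t,t)\,\ud W_j$, which is the $Y$-equation of \eqref{eqn::low_rank_xx_v2} with $\opu$ replaced by $\opv$.

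Step 2 (the frame equation). By \lemref{lemma::vanish_diffusion}, for $x \in \ran(\opv(t))$ one has $\projperp_{\opv(t)} a(x,t) = \bigl(\tfrac{\ud}{\ud t}\proj_{\opv(t)}\bigr) x$ and $\projperp_{\opv(t)} b_j(x,t) = 0$. Differentiating $\opv^\dagger(t)\opv(t) = \id$ and using $\dot{\opv}(t) = i\opf(t)\opv(t)$ with the block form of $\opf(t)$ gives $\opv^\dagger(t)\dot{\opv}(t) = 0$, and hence $\bigl(\tfrac{\ud}{\ud t}\proj_{\opv(t)}\bigr)\opv(t) = \dot{\opv}(t)$; evaluating the Lemma at $x = \opv(t)Z_t$ therefore yields the pathwise identity $\dot{\opv}(t) Z_t = \projperp_{\opv(t)} a(\opv(t)Z_t,t)$. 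Right-multiplying by $Z_t^\dagger$, taking expectations, and inverting $\ee\bigl[Z_t Z_t^\dagger\bigr]$ (which along this solution agrees with $\ee_{\theta_t}[yy^\dagger]$, invertible by assumption) gives
\[
\dot{\opv}(t) = \projperp_{\opv(t)}\,\ee\bigl[a(\opv(t)Z_t,t)Z_t^\dagger\bigr]\,\ee\bigl[Z_t Z_t^\dagger\bigr]^{-1};
\]
since $\projperp_{\opv(t)}\ee\bigl[b_j(\opv(t)Z_t,t)b_j^\dagger(\opv(t)Z_t,t)\bigr] = 0$ by the same Lemma, the diffusion correction in the frame equation of \eqref{eqn::low_rank_xx_v2} vanishes and the display coincides with it.

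Step 3 (conclusion) and the obstacle. Steps 1--2 show that $(\opv(t),\mathrm{law}(Z_t))$ solves \eqref{eqn::low_rank_xx_v2} with the same initial data as $(\opu(t),\theta_t)$; by uniqueness of that coupled system --- the content of the running assumption that the low-rank dynamics exists with $\ee_{\theta_t}[yy^\dagger]$ invertible --- the two coincide on $\tm$, so $\opv(t)=\opu(t)$ and $X_{\lr,t}=\opu(t)Y_t$ has the same law as $\opv(t)Z_t = X_t$, i.e. $\mu_{\lr,t} = \mu_t$. The step I expect to require the most care is Step 2: the mere fact that $X_t$ stays in $\ran(\opv(t))$ fixes only the subspace, not the frame, so one must exploit the gauge condition $\opv^\dagger(t)\dot{\opv}(t)=0$ encoded in the block form of $\opf(t)$ to pin down $\dot{\opv}(t)$, and one must verify that the \ito{} formula applied to the constraint $\projperp_{\opv(t)}X_t = 0$ produces a drift identity that is genuinely pathwise before averaging. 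A secondary point, worth stating precisely rather than brushing aside, is the exact notion of uniqueness invoked for \eqref{eqn::low_rank_xx_v2}, whose coefficients depend on the second moments of its own solution (a McKean--Vlasov-type coupling); under the stated hypotheses this is standard.
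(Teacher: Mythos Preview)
Your proposal is correct and follows essentially the same strategy as the paper: both arguments use \lemref{lemma::vanish_diffusion} together with the gauge identity $\opv^\dagger(t)\dot{\opv}(t)=0$ (encoded in the block form of $\opf(t)$) to show that the genuine dynamics and the low-rank dynamics \eqref{eqn::low_rank_xx_v2} coincide, and then conclude by uniqueness. The only difference is organizational---you pull $X_t$ back to $Z_t=\opv^\dagger(t)X_t$ and verify that $(\opv,Z)$ solves the low-rank system, whereas the paper pushes forward, computing $\ud X_{\lr,t}$ and $\tfrac{\ud}{\ud t}\opu(t)$ under the running assumption $\opu(t)=\opv(t)$ and matching them against \eqref{eqn::recover_lowrank} and $\tfrac{\ud}{\ud t}\opv(t)$.
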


\begin{proof}
	Fix time $t$ and assume $X_t = X_{\lr,t}$ in distribution and
	$\opv(t) = \opu(t)$. 
	Next, we shall show that $X_{\lr,t}$ and $ X_t$ satisfy the same SDE locally 
	and $\frac{\ud}{\ud t} \opv(t) = \frac{\ud }{\ud t} \opu(t)$. 
	
	By \thmref{thm::fx=xx}, we could
	straightforwardly compute that
	\[\begin{split}
	\ud X_{\lr,t} &= \ud \bigl( \opu(t) Y_t \bigr) = \left( \frac{\ud \opu(t)}{\ud t}  Y_t \right) \ud t + \opu(t) \ud Y_t  \\
	&= \frac{\ud \opu(t)}{\ud t}  \opu^{\dagger}(t) X_{\lr,t} \ud t + \opu(t) \left(\opu^{\dagger}(t) a(X_{\lr,t},t)\ \ud t + \sum_{j=1}^{N} \opu^{\dagger}(t) b_j(X_{\lr,t}, t) \ud W_j \right) \\
	&= \left(\frac{\ud \opu(t)}{\ud t}  \opu^{\dagger}(t) X_{\lr,t} + \opu(t) \opu^{\dagger}(t) a(X_{\lr,t},t) \right) \ud t + \sum_{j=1}^{N} \opu(t) \opu^{\dagger}(t) b_j(X_{\lr,t}, t) \ud W_j \\
	&= \left( \left(\frac{\ud}{\ud t}\proj_{\opu(t)}\right) X_{\lr,t} + \proj_{\opu(t)} a(X_{\lr,t}, t) \right) \ud t + \sum_{j=1}^{N} \proj_{\opu(t)} b_j(X_{\lr,t}, t) \ud W_j .\\
	\end{split}
	\]
	In the last step, we have used the fact that $\opu(t) \frac{\ud \opu^{\dagger}(t)}{\ud t} X_{\lr,t} = 0$ due to \reviewnumtwo{the restriction} on Hermitian matrix $\opg(t)$ (cf. \eqrefn{eqn::assmp_g}). 
	Comparing with \eqrefn{eqn::recover_lowrank}, we obtain that
	the SDEs of $X_t$ and $X_{\lr, t}$ coincide since $\opv(t) = \opu(t)$, as long as $\frac{\ud}{\ud t}\proj_{\opv(t)} = \frac{\ud}{\ud t} \proj_{\opu(t)}$. That means, we still need to verify $\frac{\ud }{\ud t} \opv(t) = \frac{\ud }{\ud t} \opu(t)$.
	
	Moreover, by \eqrefn{eqn::low_rank_xx_v2},
	\[\begin{split}
	\frac{\ud}{\ud t} \opu(t) &= \projperp_{\opu(t)} \left(\ee\bigl[a(X_{\lr},t) Y^{\dagger}\bigr] + \sum_{j=1}^{N} \ee\bigl[b_j(X_{\lr},t) b_j^{\dagger}(X_{\lr},t)\bigr] \opu(t) \right) \ee[Y Y^{\dagger}]^{-1}\\
	&= \projperp_{\opu(t)} \ee\bigl[a(X_{\lr},t) Y^{\dagger}\bigr] \ee[Y Y^{\dagger}]^{-1}, \qquad  \bigl(\text{by Lemma }\ref{lemma::vanish_diffusion} \text{ and } \opu(t) = \opv(t) \bigr) \\
	&= \projperp_{\opu(t)} \ee\left[ \left(\frac{\ud}{\ud t} \proj_{\opv(t)}\right) X_{\lr,t}  Y^{\dagger} + \proj_{\opv(t)} a(X_{\lr,t},t) Y^{\dagger} \right] \ee[Y Y^{\dagger}]^{-1}, \qquad (\text{by \lemref{lemma::vanish_diffusion}} ) \\
	&= \projperp_{\opu(t)}\left( \frac{\ud}{\ud t} \proj_{\opv(t)}\right) \opu(t) = i \opf(t) \opv(t)  = \frac{\ud}{\ud t} \opv(t).
	\end{split}\]
	Therefore $\frac{\ud}{\ud t} \opv(t) = \frac{\ud}{\ud t} \opu(t)$ and the
	conclusion holds.
\end{proof}

\subsection{Error bound}

We shall quantify error defined by 
\[E(t) := \Norm{ \ee_{\mu_t} \bigl[ xx^{\dagger} \bigr] - \ee_{\mu_{\lr,t}}
	\bigl[ xx^{\dagger}\bigr] }_{\hs}, \]
which measures the difference of second moment for two measures
$\mu_t$ and $\mu_{\lr,t}$ in Hilbert-Schmidt norm. Recall that we have
chosen the single test function $f(x) = x x^{\dagger}$.

\smallskip 
\begin{theorem}
	\label{thm::bound}
	Assume that:
	\begin{enumerate}
		\item Throughout the time-evolution of low-rank dynamics for $t\in \tm$,
		\[d_{\funcsp}\bigl( {\gen}_{\lr,t}
		\mu_{\lr,t},\ \gen_{t} \mu_{\lr,t} \bigr) \equiv \Norm{\sum_{j=1}^{N} \projperp_{\opu(t)} \ee_{\theta_t} \bigl[b_j(\opu(t)y,t) b_j^{\dagger}(\opu(t)y,t)\bigr] \projperp_{\opu(t)}}_{\hs} \le \eps^2. \]
		Recall that $\theta_t$ is the pullback of the restriction of $\mu_{\lr,t}$ to
		$\ran(\opu(t))$.  
		\item There exists a function $\gamma(t)$ such that, for $f(x) = xx^{\dagger}$, 
		\[\Norm{ \ee_{\mu_t} \bigl[ \gen_{t}^* f \bigr] -
			\ee_{\mu_{\lr,t}}\bigl[\gen_t^{*} f \bigr] }_{\hs} \le \gamma(t)
		E(t) \equiv \gamma(t) \Norm{ \ee_{\mu_t} \bigl[ f \bigr] -
			\ee_{\mu_{\lr,t}} \bigl[ f\bigr] }_{\hs}. \]
	\end{enumerate}
	Then, the error satisfies the integral inequality
	\[ E(t) \le E(0) + \eps^2 t + \int_{0}^{t} \gamma(s) E(s) \ud s,\]
	and thus by \gwieq, 
	\[E(t) \le \bigl(E(0) + \eps^2 t\bigr) \exp\left(\int_{0}^{t} \gamma(s)\ud s \right).\]
\end{theorem}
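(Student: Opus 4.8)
The plan is to reduce the estimate to a single matrix-valued function of time. Set $g(t) := \ee_{\mu_t}[f] - \ee_{\mu_{\lr,t}}[f]$ with $f(x) = xx^{\dagger}$, so that $E(t) = \Norm{g(t)}_{\hs}$, and then derive a closed Gr\"onwall-type integral inequality for $E$ by differentiating $g$ in time and using the duality between the generators and their adjoints. Since $\mu_t$ and $\mu_{\lr,t}$ are differentiable trajectories in $\mani$ solving $\partial_t\mu_t = \gen_t\mu_t$ and $\partial_t\mu_{\lr,t} = \gen_{\lr,t}\mu_{\lr,t}$, the definition of the adjoint generators gives $\frac{\ud}{\ud t}\ee_{\mu_t}[f] = \ee_{\mu_t}[\gen_t^{*}f]$ and $\frac{\ud}{\ud t}\ee_{\mu_{\lr,t}}[f] = \ee_{\mu_{\lr,t}}[\gen_{\lr,t}^{*}f]$, hence
\[
g'(t) = \ee_{\mu_t}[\gen_t^{*}f] - \ee_{\mu_{\lr,t}}[\gen_{\lr,t}^{*}f].
\]

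Next I would split this into a consistency piece and an approximation (tangent-projection) piece:
\[
g'(t) = \underbrace{\bigl(\ee_{\mu_t}[\gen_t^{*}f] - \ee_{\mu_{\lr,t}}[\gen_t^{*}f]\bigr)}_{g'_1(t)} \;+\; \underbrace{\ee_{\mu_{\lr,t}}\bigl[\gen_t^{*}f - \gen_{\lr,t}^{*}f\bigr]}_{g'_2(t)}.
\]
By the second assumption of the theorem, $\Norm{g'_1(t)}_{\hs} \le \gamma(t) E(t)$. For $g'_2(t)$, since $\funcsp = \{f = xx^{\dagger}\}$ is a singleton, $\Norm{g'_2(t)}_{\hs} = \Norm{\ee_{\mu_{\lr,t}}[\gen_{\lr,t}^{*}f - \gen_t^{*}f]}_{\hs} = d_{\funcsp}\bigl(\gen_{\lr,t}\mu_{\lr,t},\gen_t\mu_{\lr,t}\bigr)$, which by \thmref{thm::fx=xx}(ii) equals $\Norm{\sum_{j}\projperp_{\opu(t)}\ee_{\theta_t}[b_j(\opu(t)y,t)b_j^{\dagger}(\opu(t)y,t)]\projperp_{\opu(t)}}_{\hs}$ and is therefore bounded by $\eps^2$ by the first assumption. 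Combining the two bounds gives $\Norm{g'(t)}_{\hs} \le \gamma(t) E(t) + \eps^2$ for all $t\in\tm$.

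Then I would integrate: writing $g(t) = g(0) + \int_0^t g'(s)\,\ud s$ (a Bochner integral in the finite-dimensional matrix space) and applying the triangle inequality,
\[
E(t) = \Norm{g(t)}_{\hs} \le E(0) + \int_0^t \Norm{g'(s)}_{\hs}\,\ud s \le E(0) + \eps^2 t + \int_0^t \gamma(s) E(s)\,\ud s,
\]
which is the asserted integral inequality. Finally, since $t\mapsto E(0)+\eps^2 t$ is nondecreasing and $\gamma\ge 0$, the integral form of \gwieq{} immediately yields $E(t) \le \bigl(E(0)+\eps^2 t\bigr)\exp\!\bigl(\int_0^t\gamma(s)\,\ud s\bigr)$.

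The step I expect to be the main obstacle is the rigorous justification of the duality identity $\frac{\ud}{\ud t}\int f\,\ud\mu_t = \int \gen_t^{*}f\,\ud\mu_t$ for the \emph{unbounded} quadratic test function $f(x)=xx^{\dagger}$: one must invoke the bounded-second-moment structure defining $\mani$, together with enough regularity and integrability of the coefficients $a$ and $b_j$ along the flow, both to make $t\mapsto\ee_{\mu_t}[f]$ differentiable and to identify $g'_2(t)$ with the residual matrix $\opc$ appearing in the proof of \thmref{thm::fx=xx}. Once this duality is in place, the splitting, the two a priori bounds from the hypotheses, and the application of Gr\"onwall's lemma are entirely routine.
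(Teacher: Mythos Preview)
Your proposal is correct and follows essentially the same approach as the paper: both differentiate (or, equivalently, write the time integral of) the matrix-valued discrepancy $\ee_{\mu_t}[f]-\ee_{\mu_{\lr,t}}[f]$, split it as $\bigl(\ee_{\mu_s}[\gen_s^{*}f]-\ee_{\mu_{\lr,s}}[\gen_s^{*}f]\bigr)+\bigl(\ee_{\mu_{\lr,s}}[\gen_s^{*}f]-\ee_{\mu_{\lr,s}}[\gen_{\lr,s}^{*}f]\bigr)$, bound the two pieces by assumptions~(2) and~(1) respectively, and then apply \gwieq. The only cosmetic difference is that the paper writes the time integral of the measure equation directly and swaps the order of integration rather than naming $g'(t)$ explicitly; your remark about the duality identity for the unbounded quadratic $f$ is a fair caveat that the paper's proof also leaves implicit.
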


\begin{proof}
	The proof follows from standard error analysis for time evolution equations:
	\[\begin{split}
	E(t) & \equiv \Norm{\ee_{\mu_t} \bigl[ f \bigr] - \ee_{\mu_{\lr,t}}\bigl[ f\bigr]}_{\hs} \\
	& \le E(0) + \Norm{\int_{\hbt} f(x) \bigl(\mu_t - \mu_0\bigr)(\rd x) - f(x) \bigl(\mu_{\lr,t} - \mu_{\lr,0}\bigr)(\rd x)   }_{\hs}\\
	& = E(0) + \Norm{\int_{\hbt} f(x) \left(\int_{0}^{t}\gen_s \mu_s\ud s \right)(\rd x) - f(x) \left( \int_{0}^{t} \gen_{\lr,s} \mu_{\lr,s}\ud s \right)(\rd x)   }_{\hs}\\
	&= E(0) + \Norm{\int_{0}^{t} \left( \int_{\hbt} f(x) \left(\gen_s \mu_s \right)(\rd x) - f(x) \left(\gen_{\lr,s} \mu_{\lr,s} \right)(\rd x)\right)\   \rd s }_{\hs}  \\
	&= E(0) + \Norm{\int_{0}^{t} \left( \int_{\hbt} \left(\gen_{s}^{*} f\right) (x) \mu_s(\rd x) - \left(\gen_{\lr,s}^{*}  f\right)(x)  \mu_{\lr,s} (\rd x)\right)\ \rd s   }_{\hs} \\
	&\le E(0) + \int_{0}^{t}  \Norm{\ee_{\mu_s} \bigl[\gen_s^{*} f \bigr] - \ee_{\mu_{\lr,s}}\bigl[\gen_s^{*} f\bigr] + \ee_{\mu_{\lr,s}}\bigl[\gen_s^{*} f\bigr] - \ee_{\mu_{\lr,s}}\bigl[\gen_{\lr,s}^{*} f\bigr]  }_{\hs}\ \rd s \\
	&\le E(0) + \int_{0}^{t}  \Norm{\ee_{\mu_s} \bigl[\gen_s^{*} f \bigr] - \ee_{\mu_{\lr,s}}\bigl[\gen_s^{*} f\bigr]}_{\hs}\ \rd s + \int_{0}^{t} \Norm{ \ee_{\mu_{\lr,s}}\bigl[\gen_s^{*} f\bigr] - \ee_{\mu_{\lr,s}}\bigl[\gen_{\lr,s}^{*} f\bigr]  }_{\hs}\ \rd s \\
	& = E(0) + \int_{0}^{t}  \Norm{\ee_{\mu_s} \bigl[\gen_s^{*} f \bigr] - \ee_{\mu_{\lr,s}}\bigl[\gen_s^{*} f\bigr]}_{\hs}\ \rd s 
	+ \int_{0}^{t} d_{\funcsp}\left(\gen_s \mu_{\lr,s}, \gen_{\lr,s} \mu_{\lr,s} \right)\ \rd s \\ 
	&\le E(0) + \int_{0}^{t} \gamma(s) E(s)\ \ud s + \int_{0}^{t} \eps^2\ \rd s. \\
	\end{split}\]
	Thus we have proved the integral inequality; the rest of the conclusion follows from \gwieq. 
\end{proof}

\begin{remark}
	The estimate above holds for arbitrary $\eps$, however in practice,
	the above bound is most useful when $\eps$ is a small number. This
	corresponds to that the diffusivity function
	$\projperp_{\opu(t)} b_j(\opu(t)y,t) =\mathcal{O}(\eps)$.
	For instance, SDE system with small noise falls
	into this type. 
	%In particular, \lb{} equation is usually derived
	%from weak-coupling region, that means, the \lb{} operators $L_k$ in
	%\eqrefn{eqn::lb} have small magnitude. By either scheme, QSD
	%or LQSD, the diffusion terms have a small magnitude.  
	When $\eps$ is
	large, of course, this indicates that low-rank approximation
	fails. One might need to use higher rank  to get an
	accurate approximation of the dynamics, or even use full rank (\ie, solving the original dynamics). 
\end{remark}

From the proof, the assumption (2) in \thmref{thm::bound}
is natural, in order to use \gwieq.  To better illustrate the
assumption, \reviewnumtwo{we provide here a concrete example to give explicit
form of $\gamma(t)$.}

\medskip

\reviewnumtwo{
\begin{example}[Choice of $\gamma(t)$ for linear drift and diffusion functions]
	\normalfont
	\
		If SDE in \eqrefn{eqn::sde} has linear drift and diffusion functions, that is,
		\[a(x,t) = \vect{\Lambda}(t) x, \qquad b_j(x,t) = \vect{\Theta}_j(t) x,\]
		where bold Greek letters $\vect{\Lambda}$ and $\vect{\Theta}_j$ 
		are time-dependent matrices on $\hbt$. 
		
		By Lemma \ref{lemma::generator}, one could straightforwardly find that 
		\[\gamma(t) = 2\Norm{\vect{\Lambda}(t) }_{2} + \sum_{j=1}^{N} \Norm{\vect{\Theta}_j(t)}_2^2\]
		satisfies the assumption.
\end{example}
Another example of $\gamma(t)$ for unraveling scheme will be given in the next section below.
}

Consider that if the rank $r = \dimn$, then
$\projperp_{\opu(t)} = 0$ for all $t$, so that $\eps = 0$. If we further
let $E(0) = 0$, then $E(t) = 0$ for all $t$.  This result is
consistent with intuition, since in this case the low-rank dynamic is
exactly the original SDE.  Though the rank $r$ does not appear
explicitly in the error estimate, it is implicitly hidden inside
$\eps$. Loosely speaking, the larger the rank $r$, the smaller the
$\eps$. 
\reviewnumtwo{
	Even though the relation between $\eps$ and $r$ is not analytically given, \thmref{thm::bound} can still be useful in practical simulation and in designing adaptive scheme.   
	Numerically what we need is to set up error tolerance $\eps$, and then compute $\Norm{\sum_{j=1}^{N} \projperp_{\opu(t)} \ee_{\theta_t} \bigl[b_j(\opu(t)y,t) b_j^{\dagger}(\opu(t)y,t)\bigr] \projperp_{\opu(t)}}_{\hs}$ on-the-fly in
	the dynamics; if this quantity is close to $\eps^2$, then it indicates that the rank chosen is not large enough anymore and we should adaptively increase the rank $r$ in order to control the error.}
This idea has been used in
\cite{LeBris13} in numerically solving \lb{} equation by the
deterministic low-rank approximation.

%======================================
% Connections to unraveling.
%======================================

\section{Connections to unraveling of Lindblad equations}
\label{sec::discuss::unravel}
In this section, we shall discuss the relationship between unraveling
of \lb{} equation and dynamical low-rank approximation method; in
particular, we establish a commuting diagram for the action of
unraveling and the action of dynamical low-rank approximation under
certain conditions.

\subsection{\lb{} equation and stochastic unraveling}

\lb{} equation, one of the most popular quantum master equations for
open quantum systems, has the following form \cite{Lindblad76}
\begin{equation}
\label{eqn::lb}
\dot{\oprho} = \lbop(\oprho) =  -i \Comm{\ophami}{\oprho} + 
\sum_{k} \left( \oplb_{k} \oprho \oplb_{k}^{\dagger} - \frac{1}{2} \Anticomm{\oplb_{k}^{\dagger} \oplb_{k}}{ \oprho }  \right),
\end{equation}
where Hermitian operator $\ophami$ is Hamiltonian, 
$\oplb_k$ are Lindblad operators, \addchange{
$\Comm{\cdot}{\cdot}$ is the commutator} and
$\Anticomm{\cdot}{\cdot}$ is the anti-commutator. The second term on
the right hand side models the interaction of the system with the
environment.

\begin{figure}[h!]
\begin{center}
%	\begin{tikzpicture}[->,>=stealth']
%	
%	
%	\node[state, text width = 3cm] (LB) 
%	{ Lindblad equation };
%	
%	\node[state,    	
%	text width=3cm, 	
%	yshift=0cm, 		
%	right of=LB, 	
%	node distance=12cm, 	
%	anchor=center] (QME)
%	{Low-rank QME
%	};
%	
%	\node[state,
%	below of=LB,
%	yshift=-1.5cm,
%	anchor=center,
%	text width=3cm] (UV) 
%	{
%		Unraveling
%	};
%	
%	\node[state,
%	text width=3cm,
%	right of=UV,
%	node distance=12cm,
%	anchor=center] (LUV) 
%	{Low-rank unraveling
%	};
%	
%	
%	\path 
%	(LB) edge node[anchor=south,above]{(\rom{1}) dynamical low-rank approximation} (QME)
%	(LB) edge node[anchor=south,left]{(\rom{2}) unravel} (UV)
%	(UV) edge node[anchor=south,below]{(\rom{4}) \thismethod} (LUV)
%	(QME) edge node[anchor=south,right]{(\rom{3}) unravel} (LUV);                      
%	
%	\end{tikzpicture}
\includegraphics[width=\textwidth]{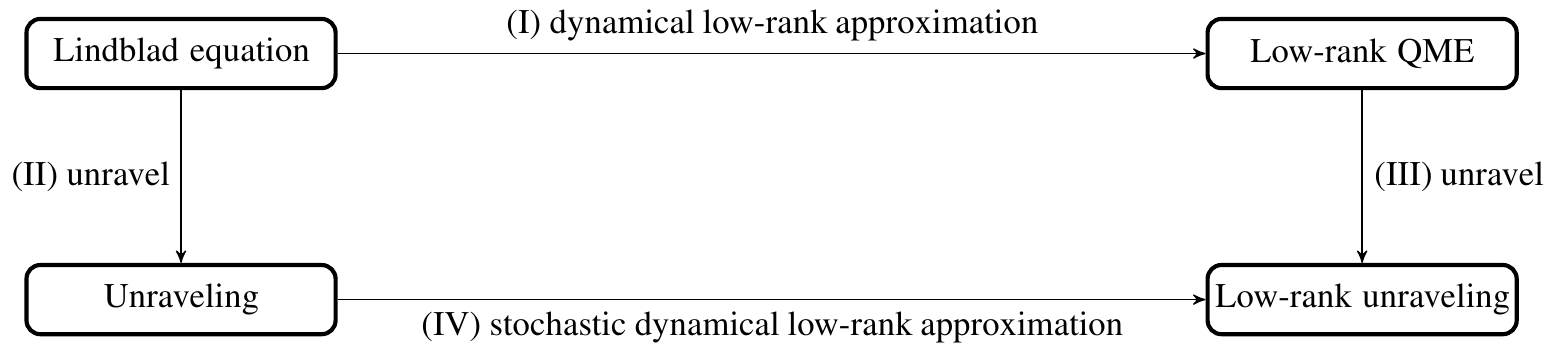}
\end{center}
\caption{Diagram for unraveling and dynamical low-rank approximation
  for \lb{} equations.}
\label{fig::diagram}
\end{figure}

The \lb{} equation is often challenging to solve numerically due to
its high-dimensionality. There are two major dimension-reduction
approaches from the literature, summarized in the
\figref{fig::diagram}: (stochastic) unraveling method (II) and
(deterministic) dynamical low-rank approximation method (\rom{1}).

{\bf Step (\rom{1}): Dynamical low-rank approximation method}. This
method for \lb{} equation has been studied in \cite{LeBris13}: The
ansatz $ \oprho_{\lr}(t) = \LU (t) \LSIG (t) \LU^{\dagger}(t)$ was
used, where $\LU(t)$ satisfies the orthonormality constraint
$\LU^{\dagger}(t) \LU(t) = \id$; and $\LSIG(t)$ is a $r\times r$
strictly positive matrix with trace one. Tilde is used to distinguish
the $\opu(t)$ and $\opsigma(t)$ for deterministic dynamical low-rank
approximation and those in the \thismethodshort{} method.  Dynamical
low-rank approximation method would lead to a coupled ODE system for
$\LU(t)$ and $\LSIG(t)$, which approximates the \lb{} equation of
$\oprho(t)$. We shall revisit the result from \cite{LeBris13} below in
\thmref{thm::lebris}, while dropping the trace-preserving constraint to $\LSIG$. We will discuss the trace-preserving constraint further in \secref{sub:trace}.

\medskip 
  
  \begin{theorem}[Adapted from \cite{LeBris13} with modification]
\label{thm::lebris}
Consider the subspace
\[ \wt{\mani}_r := \left\{ \LU \LSIG \LU^{\dagger}: \LU^{\dagger} \LU = \id_{r\times r},\ \LSIG > 0 \right\}, \] 
to approximate the manifold of positive matrices. 
Then by dynamical low-rank approximation, \ie, by solving 
\[\frac{\ud}{\ud t}\oprho_{\lr}(t) = \argmin_{v \in \tang_{\oprho_{\lr}(t)} \mani_r} \Norm{\lbop\bigl(\oprho_{\lr}(t)\bigr) - v }_{\hs},\]
with further restriction on the tangent space that $\frac{\ud}{\ud t}\LU(t) = i \opg(t)\LU(t)$, where Hermitian matrix 
$\opg(t) = \projperp_{\LU(t)} \opg(t) \proj_{\LU(t)} + \hc$, we have the unique low-rank dynamics, 
\begin{equation}
\label{eqn::lebris}
\left\{
\begin{split}
\frac{\ud}{\ud t}\LSIG(t) &= \LU^{\dagger}(t) \lbop\bigl(\oprho_{\lr}(t)\bigr) \LU(t)\\
\frac{\ud}{\ud t} \LU(t) &= \projperp_{\LU(t)} \lbop\bigl(\oprho_{\lr}(t) \bigr) \LU(t) \LSIG(t)^{-1}. \\
\end{split}\right.
\end{equation}
\end{theorem}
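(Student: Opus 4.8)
The plan is to reuse the tangent-space projection argument of \thmref{thm::fx=xx}, now in the deterministic Hilbert--Schmidt setting. First I would parametrize a generic admissible tangent vector $v \in \tang_{\oprho_{\lr}(t)} \wt{\mani}_r$. Differentiating $\oprho_{\lr} = \LU\LSIG\LU^{\dagger}$ along a curve and substituting $\dot\LU = i\opg\LU$ with $\opg$ Hermitian and $\opg = \projperp_{\LU}\opg\proj_{\LU} + \hc$, one obtains
\[ v = i\opg\LU\LSIG\LU^{\dagger} + \LU\dot\LSIG\LU^{\dagger} - i\LU\LSIG\LU^{\dagger}\opg, \]
where $\dot\LSIG$ ranges over Hermitian $r\times r$ matrices (since $\oprho_{\lr}$ sits at an interior point of $\{\LSIG>0\}$, no further constraint is imposed; dropping the trace-preserving constraint is exactly what frees $\tr\dot\LSIG$). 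Using $\proj_{\LU}\LU = \LU$, $\projperp_{\LU}\LU = 0$ and $\LU^{\dagger}\projperp_{\LU}=0$, one checks that $\opg\LU = \projperp_{\LU}\opg\LU$, so that $\dot\LU = i\opg\LU = i\projperp_{\LU}\opg\LU$ and in particular $\LU^{\dagger}\dot\LU = 0$ automatically. The same identities give the clean block structure, relative to the orthogonal projectors $\proj_{\LU}$ and $\projperp_{\LU}$:
\[ \proj_{\LU} v\proj_{\LU} = \LU\dot\LSIG\LU^{\dagger}, \qquad \projperp_{\LU} v\proj_{\LU} = i\projperp_{\LU}\opg\LU\LSIG\LU^{\dagger}, \qquad \proj_{\LU} v\projperp_{\LU} = (\projperp_{\LU} v\proj_{\LU})^{\dagger}, \qquad \projperp_{\LU} v\projperp_{\LU} = 0. \]

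Next I would exploit orthogonality of the four $\proj_{\LU}/\projperp_{\LU}$ blocks in the Hilbert--Schmidt inner product. Writing $L := \lbop(\oprho_{\lr}(t))$ (Hermitian, since $\lbop$ preserves Hermiticity) and splitting it into the same four blocks, the objective decouples as
\[ \Norm{L - v}_{\hs}^2 = \Norm{\proj_{\LU} L\proj_{\LU} - \LU\dot\LSIG\LU^{\dagger}}_{\hs}^2 + 2\Norm{\projperp_{\LU} L\proj_{\LU} - i\projperp_{\LU}\opg\LU\LSIG\LU^{\dagger}}_{\hs}^2 + \Norm{\projperp_{\LU} L\projperp_{\LU}}_{\hs}^2, \]
where Hermiticity of $L$ and $v$ was used to merge the two off-diagonal blocks. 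The last term is independent of all free parameters; the first two are independent strictly convex quadratics. Minimizing the first forces $\LU\dot\LSIG\LU^{\dagger} = \proj_{\LU} L\proj_{\LU}$, hence $\dot\LSIG = \LU^{\dagger}\lbop(\oprho_{\lr})\LU$. Minimizing the second forces $i\projperp_{\LU}\opg\LU\LSIG\LU^{\dagger} = \projperp_{\LU} L\proj_{\LU}$; right-multiplying by $\LU$ and using invertibility of $\LSIG$ (valid because $\LSIG>0$) gives $\projperp_{\LU}\opg\LU = -i\projperp_{\LU} L\LU\LSIG^{-1}$, so $\dot\LU = i\projperp_{\LU}\opg\LU = \projperp_{\LU}\lbop(\oprho_{\lr})\LU\LSIG^{-1}$. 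These are precisely \eqrefn{eqn::lebris}.

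For uniqueness I would observe that the restriction $\opg = \projperp_{\LU}\opg\proj_{\LU} + \hc$ eliminates the blocks $\proj_{\LU}\opg\proj_{\LU}$ and $\projperp_{\LU}\opg\projperp_{\LU}$, which do not enter $v$ at all, so the map from the free data $(\dot\LSIG,\ \projperp_{\LU}\opg\LU)$ to the two nontrivial blocks of $v$ is a linear isomorphism; strict convexity of the quadratic then yields a unique minimizer, from which $\opg$ and hence $\dot\LU$ are recovered uniquely. I would also note that $\dot\LSIG$ being Hermitian keeps $\LSIG(t)$ Hermitian along the flow, with positivity addressed separately (cf.~\secref{sub:trace}). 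The step that needs the most care is the first one: verifying that the displayed family exhausts $\tang_{\oprho_{\lr}}\wt{\mani}_r$ subject to the tangent-space restriction, and that the block decomposition is exactly orthogonal in $\Norm{\cdot}_{\hs}$; once this is in place, the minimization is routine and parallels \thmref{thm::fx=xx}. A minor but conceptually important subtlety is that the minimization runs only over the restricted tangent directions of the form $\dot\LU = i\opg\LU$ with the prescribed block structure, which is what makes the resulting low-rank dynamics genuinely unique rather than determined only up to reparametrization of $\LU$.
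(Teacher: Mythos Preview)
Your argument is correct and lands on exactly the same low-rank dynamics as the paper, but the route differs in execution. The paper's proof (Appendix~\ref{app::proof_lebris}) writes $\opc = \lbop(\oprho_{\lr}) - v$ and imposes first-order stationary conditions $\tr\bigl(\Comm{\oprho_{\lr}}{\opc}\,\delta\opg\bigr)=0$ and $\tr\bigl(\LU^{\dagger}\opc\LU\,\delta\opeta\bigr)=0$, then solves these to extract $\opg$ and $\opeta$; uniqueness is inferred from there being a single stationary point. You instead exploit directly the Hilbert--Schmidt orthogonality of the four $\proj_{\LU}/\projperp_{\LU}$ blocks to split $\Norm{L-v}_{\hs}^2$ into independent quadratics and minimize each by inspection. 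Your route is a bit cleaner: it avoids manipulating variational conditions, makes the residual $\Norm{\projperp_{\LU}L\projperp_{\LU}}_{\hs}^2$ explicit (paralleling part~(ii) of \thmref{thm::fx=xx}), and yields uniqueness immediately from strict convexity together with the observation that $(\dot\LSIG,\,\projperp_{\LU}\opg\LU)\mapsto v$ is injective under the restriction on $\opg$. The paper's stationary-condition approach, on the other hand, mirrors exactly the derivation in \thmref{thm::fx=xx}, which keeps the two proofs methodologically aligned.
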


The proof is almost the same as the derivation in \cite{LeBris13}. It
is provided in \ref{app::proof_lebris} for readers' convenience. It
should be noticed that if the condition
$\opg(t) = \projperp_{\opu(t)} \opg(t) \proj_{\opu(t)} + \hc$ is not
imposed, the low-rank dynamics is not unique as additional degrees of
freedom exist, called $A$ in \cite{LeBris13}. The above dynamics could
be viewed as a special case by choosing $A= 0$ in that reference. Just
to be clear, note that this $A$ is not the drift function in this
paper; it is a notation used in \cite{LeBris13}.  However, the major
difference comes from the trace-preserving condition in
\cite{LeBris13}, which takes the subspace as
$\wt{\mani}_r := \bigl\{ \LU \LSIG \LU^{\dagger}: \LU^{\dagger} \LU =
  \id_{r\times r},\ \LSIG > 0,\ \tr(\LSIG) = 1 \bigr\}$.
As a consequence, $\lambda \id_{r\times r}$ term in equation (5) in
\cite{LeBris13} does not appear in our expression.

\textbf{Step (\rom{2}): Unraveling}. Recall that unraveling means a stochastic
  wave-equation that recovers the evolution of density matrices
  $\oprho(t)$ in expectation. More specifically, it looks for a
  stochastic process $X_t(\omega)$ on the Hilbert space $\hbt$, such
  that the expectation
  $\oprho(t):=\ee\bigl[X_t(\omega) X_t^{\dagger}(\omega)\bigr]$ solves
  the \lb{} equation.  Multiple choices exist for the unraveling
  stochastic process: quantum state diffusion (QSD) \cite{Gisin92},
  linear quantum state diffusion (LQSD) \cite{Brun00} and quantum jump
  process \cite{Dalibard92}. We restrict the stochastic unraveling to
  SDE type in the discussion below, that is, we shall only consider
  unraveling of the form as in \eqrefn{eqn::sde}. Still various
  choices exist, while the coefficients satisfy the relation stated in
  the following Lemma.  \smallskip
  \begin{lemma}
    \label{lemma::lb_unravel}
    If $X_t(\omega)$ in \eqrefn{eqn::sde} is a stochastic unraveling of a \lb{} equation $\dot{\oprho} = \lbop(\oprho)$ if and only if
    \begin{equation}
      \label{eqn::lb_equiv_cond}
      a(x,t) x^{\dagger} + x a^{\dagger}(x,t) + \sum_{j=1}^{N} b_j(x,t) b_j^{\dagger}(x,t) = \lbop\bigl(xx^{\dagger}\bigr),\qquad \forall x\in \hbt,\ \forall t\in \tm.
    \end{equation}
  \end{lemma}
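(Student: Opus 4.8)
The plan is to differentiate the second moment $\oprho(t):=\ee\bigl[X_t X_t^{\dagger}\bigr]$ in time via the \ito{} formula and compare the result with $\lbop\bigl(\oprho(t)\bigr)$. First I would apply \ito{}'s formula to $f(x)=xx^{\dagger}$ along the SDE \eqref{eqn::sde}; this is the computation already recorded in Lemma~\ref{lemma::generator}, and it yields
\begin{equation*}
  \ud\bigl(X_t X_t^{\dagger}\bigr) = \bigl(\gen_t^{*}f\bigr)(X_t)\,\ud t + \sum_{j=1}^{N}\bigl(b_j(X_t,t)X_t^{\dagger} + X_t b_j^{\dagger}(X_t,t)\bigr)\,\ud W_j,
\end{equation*}
with $\bigl(\gen_t^{*}f\bigr)(x) = x a^{\dagger}(x,t) + a(x,t)x^{\dagger} + \sum_{j=1}^{N}b_j(x,t)b_j^{\dagger}(x,t)$. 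Taking expectations, and assuming enough integrability (e.g.\ $\ee\lvert X_t\rvert^{2}<\infty$ locally uniformly in $t$) so that the stochastic integral is a martingale and one may differentiate under the expectation, the martingale term drops and
\begin{equation*}
  \frac{\ud}{\ud t}\oprho(t) = \ee_{\mu_t}\bigl[(\gen_t^{*}f)(x)\bigr],\qquad \mu_t=\mathrm{Law}(X_t).
\end{equation*}

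For the ``if'' direction, suppose \eqref{eqn::lb_equiv_cond} holds, i.e.\ $(\gen_t^{*}f)(x)=\lbop(xx^{\dagger})$ for every $x\in\hbt$ and $t\in\tm$. Then $\frac{\ud}{\ud t}\oprho(t)=\ee_{\mu_t}\bigl[\lbop(xx^{\dagger})\bigr]=\lbop\bigl(\ee_{\mu_t}[xx^{\dagger}]\bigr)=\lbop\bigl(\oprho(t)\bigr)$, using only linearity (and continuity) of $\lbop$ to pull it out of the expectation. Hence $\oprho(t)$ solves the \lb{} equation \eqref{eqn::lb} for every admissible initial law, which is precisely the assertion that $X_t$ is an unraveling.

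For the ``only if'' direction I would exploit that an unraveling must reproduce the \lb{} flow from \emph{every} initial density matrix, in particular from a deterministic start $X_s=x$ at an arbitrary time $s\in\tm$; such data give the rank-one $\oprho(s)=xx^{\dagger}$, whose scalings span a dense cone. For such data the unraveling property forces $\left.\tfrac{\ud}{\ud t}\right|_{t=s}\oprho(t)=\lbop(xx^{\dagger})$, while the \ito{} computation above evaluated at $t=s$ gives $\left.\tfrac{\ud}{\ud t}\right|_{t=s}\oprho(t)=(\gen_s^{*}f)(x)$. Equating the two expressions and letting $x$ and $s$ range over $\hbt$ and $\tm$ yields \eqref{eqn::lb_equiv_cond}.

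The main obstacle is the ``only if'' direction, and it is conceptual rather than computational: one has to commit to the reading of ``$X_t$ is an unraveling'' in which the matching of $\ee[X_tX_t^{\dagger}]$ with the \lb{} solution is required for all initial conditions (or at least the deterministic ones), and one needs mild regularity — local well-posedness of \eqref{eqn::sde}, the integrability above, and continuity of $t\mapsto\ee_{\mu_t}[(\gen_t^{*}f)(x)]$ — to legitimately differentiate $\oprho(t)$ at $t=s$ and to interchange expectation with $\tfrac{\ud}{\ud t}$. Granting this, the rest is the bookkeeping sketched above, and the ``if'' direction needs nothing beyond the \ito{} formula and linearity of $\lbop$.
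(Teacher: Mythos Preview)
Your proposal is correct and follows the natural route: the paper does not give an explicit proof of this lemma, treating it as an immediate consequence of the \ito{} computation already stated in Lemma~\ref{lemma::generator} together with linearity of $\lbop$. Your write-up makes both directions explicit, and your handling of the ``only if'' direction via deterministic initial data $X_s=x$ is exactly the right way to extract the pointwise identity from the definition of unraveling used in the paper.
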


  Among various options of unraveling, two most popular choices are:
  \begin{itemize}
  \item  Linear quantum state diffusion (LQSD) refers to the choice
    \[\left\{\begin{split}
        a(x) &= \left(-i \ophami - \frac{1}{2} \sum_{k} \oplb_k^{\dagger} \oplb_k \right) x\\
        b_{k,1}(x) &= \frac{1}{\sqrt{2}} \oplb_k x \\
        b_{k,2}(x) &= \frac{i}{\sqrt{2}} \oplb_k x.\\
      \end{split}\right.\]
    So that the autonomous SDE is given by 
    \[\ud X_t = a(x) \ud t +\sum_{k} \left( b_{k,1}(x)\ud W_{k,1} + b_{k,2}(x)  \ud W_{k,2}\right), \]
    where $W_{k,1}$ and $W_{k,2}$ are independent standard
    (real-valued) Brownian motions. If we have $N$ \lb{} operators,
    then we have $2N$ (real-valued) Brownian motions in SDE. If one combines
    $W_{k,1}$ and $W_{k,2}$ together to form a complex-valued Brownian
    motion $W_{k,1} + i W_{k,2}$, then there are only $N$ diffusion
    terms.

  \item Quantum state diffusion (QSD) refers to the choice
    \[\left\{\begin{split}
        a(x) &= \left(-i \ophami + \sum_{k} \Inner{x}{\oplb_k^{\dagger} x} \oplb_k - \frac{1}{2} \oplb_k^{\dagger} \oplb_k - \frac{1}{2} \Abs{\Inner{x}{\oplb_k^{\dagger} x}}^2 \right) x\\
        b_{k,1}(x) &= \frac{1}{\sqrt{2}} \bigl(\oplb_k - \Inner{x}{\oplb_k x} \bigr) x \\
        b_{k,2}(x) &=  \frac{i}{\sqrt{2}} \bigl(\oplb_k - \Inner{x}{\oplb_k x} \bigr) x. \\
      \end{split}\right.\]
    
  \end{itemize}

\addchange{One could easily verify that the above two choices satisfy \eqrefn{eqn::lb_equiv_cond}.}

{\bf Step (\rom{3}):} In \cite{LeBris15}, the authors also considered an
  unraveling scheme resulting from the (deterministic) low-rank
  approximation to \lb{} equations.  As the low-rank
  approximation preserves the structure of the equation, the
  unraveling is similar to the above discussions. This low-rank
  unraveling could be used in control variate for Monte Carlo method
  for \lb{} equations.

\subsection{A commuting diagram for low-rank approximation and
  unraveling}

The route in \cite{LeBris15} is from \lb{} equation to
low-rank quantum master equation and then to the unraveling of
low-rank QME (step (\rom{1}) to step ({3}) in
\figref{fig::diagram}). It is thus natural to consider the alternative
route: \ie, finding the unraveling of \lb{} equation and then applying
\thismethod{} method that we developed in \secref{sec::sdlr} and
\ref{sec::example}); in \figref{fig::diagram}, this refers to the
route from step (\rom{2}) to step (\rom{4}). One immediate question is
whether these two routes commute, in the sense that they end up with
the same equation for low-rank unraveling. Due to the non-uniqueness
of unraveling scheme, in general, the answer is negative.  Perhaps, a
more specific and reasonable question to ask is that given the
unraveling of \lb{} equation, after applying \thismethodshort{}
method, whether its statistical average recovers the low-rank QME
(such as that derived in \cite{LeBris13} using deterministic low-rank approximation).

The answer is positive with slight modification in (deterministic)
dynamical low-rank approximation method in \cite{LeBris13}. See \thmref{thm::lebris} for details of the \reviewnumtwo{modification in constraints} as well as the resulting low-rank QME.  
Moreover,
it turns out that such commuting diagram does not depend on the
unraveling scheme chosen for \lb{} equation. The result is summarized in
\thmref{thm::commuting}.

\medskip

\begin{theorem}[Commuting diagram]
\label{thm::commuting}
For any unraveling scheme in SDE form of \lb{} equation (see
\lemref{lemma::lb_unravel}), the low-rank unraveling  obtained
after applying the \thismethod{} method, 
is an unraveling scheme of
low-rank quantum master equation obtained via (deterministic) dynamical
low-rank approximation method given by \eqrefn{eqn::lebris} in
Theorem~\ref{thm::lebris}.
\end{theorem}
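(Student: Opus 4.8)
The plan is to start from a generic SDE unraveling $X_t$ of the Lindblad equation, as in \lemref{lemma::lb_unravel}, satisfying the equivalence condition \eqref{eqn::lb_equiv_cond}, and push it through \thmref{thm::fx=xx}. Denote the low-rank unraveling by $X_{\lr,t} = \opu(t) Y_t$ and set $\LSIG(t) := \ee_{\theta_t}[Y_t Y_t^{\dagger}]$, which by the hypothesis of \thmref{thm::fx=xx} is invertible; this is the natural candidate to play the role of the matrix $\LSIG$ in \thmref{thm::lebris}. The first step is to write down $\ee_{\mu_{\lr,t}}[xx^{\dagger}] = \opu(t)\LSIG(t)\opu^{\dagger}(t) =: \oprho_{\lr}(t)$ and to show this matrix satisfies the coupled ODE system \eqref{eqn::lebris}. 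Concretely, I would differentiate $\oprho_{\lr}(t)$ using \thmref{thm::fx=xx}(iii), i.e. the evolution equations \eqref{eqn::low_rank_xx_v2} for $\opu(t)$ together with the It\^o formula for $\ee[Y_t Y_t^{\dagger}]$ (which is exactly what Lemma~\ref{lemma::generator} provides applied on $\Complex^r$).

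The key algebraic step is then to recognize, using condition \eqref{eqn::lb_equiv_cond} pointwise in $x$, that the combination appearing in $\gen_{\lr,t}^*$ (see Lemma~\ref{lemma::generator}) collapses: with $A(y,t) = \opu^{\dagger}(t) a(\opu(t)y,t)$ and $B_j(y,t) = \opu^{\dagger}(t) b_j(\opu(t)y,t)$ from \eqref{eqn::low_rank_xx}, the terms $x a^{\dagger} + a x^{\dagger} + \sum_j b_j b_j^{\dagger}$ evaluated at $x = \opu(t)y$ are precisely $\lbop(\opu(t)y y^{\dagger}\opu^{\dagger}(t))$. Taking expectations over $\theta_t$ and projecting with $\proj_{\opu(t)}$ and $\projperp_{\opu(t)}$ respectively should reproduce the two lines of \eqref{eqn::lebris}: the $\LU^{\dagger}\lbop(\oprho_{\lr})\LU$ term comes from the $\opu^{\dagger}(t)(\cdot)\opu(t)$ block after using linearity of $\lbop$ to pull the expectation inside, and the $\projperp_{\LU}\lbop(\oprho_{\lr})\LU\LSIG^{-1}$ term comes from matching the formula for $\dot{\opu}(t)$ in \eqref{eqn::low_rank_xx_v2} with the off-diagonal block of $\lbop(\oprho_{\lr})$. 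I would also check that the constraint $\opg(t) = \projperp_{\opu(t)}\opg(t)\proj_{\opu(t)} + \hc$ in the SDLR derivation matches the analogous constraint on $\opg(t)$ in \thmref{thm::lebris}, so that the two low-rank dynamics live in the same gauge and genuinely coincide rather than merely being gauge-equivalent.

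The main obstacle I anticipate is bookkeeping with the It\^o correction terms: the second-moment dynamics of $Y_t$ carries a $\sum_j \ee[B_j B_j^{\dagger}]$ contribution, and one must verify that after conjugation by $\opu(t)$ and combination with the $\dot{\opu}\,\opu^{\dagger}$ and $\opu\,\dot{\opu}^{\dagger}$ cross terms, everything reassembles into $\lbop(\oprho_{\lr}(t))$ with no leftover pieces — in particular the $\projperp_{\opu}\lbop(\oprho_{\lr})\projperp_{\opu}$ block, which \thmref{thm::fx=xx}(ii) identifies as the irreducible error, must correctly drop out of $\frac{\ud}{\ud t}\oprho_{\lr}$ because $\oprho_{\lr}$ is supported on $\ran(\opu(t))$. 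Once this identity $\frac{\ud}{\ud t}\oprho_{\lr}(t) = $ (right-hand side of the $\LU,\LSIG$ system) is established and the initial data are matched, uniqueness of solutions to \eqref{eqn::lebris} (as in \thmref{thm::lebris}) finishes the argument. The fact that this works for \emph{any} unraveling satisfying \eqref{eqn::lb_equiv_cond} is automatic, since only the pointwise identity \eqref{eqn::lb_equiv_cond} — and not the specific choice of $a$, $b_j$ — enters the collapse.
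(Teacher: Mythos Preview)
Your proposal is correct and follows essentially the same route as the paper: define $\opsigma(t):=\ee[Y_tY_t^{\dagger}]$, compute $\frac{\ud}{\ud t}\opsigma(t)$ via the \ito{} formula for $Y_t$, use \eqref{eqn::lb_equiv_cond} pointwise at $x=\opu(t)y$ together with linearity of $\lbop$ to collapse the drift-plus-diffusion combination into $\opu^{\dagger}(t)\lbop(\oprho_{\lr}(t))\opu(t)$, and rewrite the $\dot{\opu}(t)$ equation from \eqref{eqn::low_rank_xx_v2} in terms of $\opsigma(t)$ to match \eqref{eqn::lebris}. One minor simplification relative to your plan: the paper verifies the $\LSIG$ and $\LU$ equations \emph{separately} rather than differentiating $\oprho_{\lr}(t)=\opu(t)\opsigma(t)\opu^{\dagger}(t)$ as a whole, so the cross terms $\dot{\opu}\,\opsigma\,\opu^{\dagger}+\hc$ and the $\projperp_{\opu}\lbop(\oprho_{\lr})\projperp_{\opu}$ block you flagged as the main obstacle never actually need to be tracked.
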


\begin{proof}

By applying \thismethodshort{} (step (\rom{4}) in \figref{fig::diagram}) to unraveling scheme of \lb{} equation
and using \eqrefn{eqn::lb_equiv_cond}, 
one could obtain the following result.
\begin{equation*}
\left\{\begin{split}
\frac{\ud}{\ud t}\opu(t) &= \projperp_{\opu(t)} \lbop\bigl(\ee\bigl[ \opu(t) Y_t Y_t ^{\dagger} \opu(t)^{\dagger} \bigr]\bigr) \opu(t) \ee\bigl[Y_t Y_t^{\dagger} \bigr]^{-1}\\
\ud Y_t &=  \opu^{\dagger}(t) a( \opu(t) Y_t, t)\ \ud t + \sum_{j=1}^{N} \opu^{\dagger}(t) b_j(\opu(t) Y_t, t)\ \ud W_j. \\
\end{split}\right.
\end{equation*}

Next we will verify that the above coupled ODE-SDE system does play the role of unraveling of low-rank QME in \eqrefn{eqn::lebris}, which is step (\rom{3}) in \figref{fig::diagram}. 
By denoting $\opsigma(t) := \ee\bigl[Y_t Y^{\dagger}_t\bigr]$, and $\oprho_{\lr}(t) := \ee\bigl[ X_{\lr,t} X_{\lr,t}^{\dagger} \bigr]\equiv \opu(t) \opsigma(t) \opu^{\dagger}(t)$, one could find that
\begin{equation*}
\begin{split}
\frac{\ud}{\ud t}\opsigma(t) &=  
\ee\left[Y_t a^{\dagger}(\opu(t) Y_t,t) \opu(t) + \opu^{\dagger}(t) a(\opu(t) Y_t, t) Y_t^{\dagger} + \sum_{j=1}^{N} \opu^{\dagger}(t) b_j(\opu(t) Y_t, t) b_j^{\dagger}(\opu(t)Y_t, t) \opu(t) \right]\\
&= \opu^{\dagger}(t)\ee \left[ \bigl(\opu(t) Y_t\bigr) a^{\dagger}(\opu(t) Y_t,t) 
+  a(\opu(t) Y_t, t) \bigl(\opu(t) Y_t\bigr)^{\dagger}
+ \sum_{j=1}^{N}  b_j(\opu(t) Y_t, t) b_j^{\dagger}(\opu(t)Y_t, t)
\right] \opu(t) \\
&= \opu^{\dagger}(t) \ee\left[ \lbop\bigl(\opu(t) Y_t Y_t^{\dagger} \opu^{\dagger}(t)\bigr) \right] \opu(t)
\qquad 
\text{(use \eqrefn{eqn::lb_equiv_cond})}  \\
& = \opu^{\dagger}(t) \lbop\bigl(\opu(t) \opsigma(t) \opu^{\dagger}(t) \bigr) \opu(t).
\end{split}\end{equation*}
The time-evolution equation for $\opu(t)$ can be rewritten, in terms of $\opsigma(t)$, as
\[\frac{\ud}{\ud t} \opu(t) =\projperp_{\opu(t)} \lbop\bigl(\opu(t) \opsigma(t) \opu^{\dagger}(t) \bigr) \opu(t) \opsigma(t)^{-1}. \]
By comparing these two equations with \eqrefn{eqn::lebris}, one could conclude that the low-rank SDE, after applying \thismethodshort{} method, exactly recovers the \eqrefn{eqn::lebris}, which means, the low-rank unraveling is the unraveling for low-rank QME given in \eqrefn{eqn::lebris} and the diagram in \figref{fig::diagram} indeed commutes in this sense. 
Also, note that in the above calculation,
we haven't used any specific choice of unraveling scheme, thus the conclusion is independent of unraveling scheme chosen for \lb{} equation.
\end{proof}

\medskip 

\begin{remark}
  It might not be surprising that the diagram commutes under the above
  conditions. If we consider
  $\oprho_{\lr}(t) = \ee\bigl[X_{\lr,t} X_{\lr,t}^{\dagger}\bigr]$, by
  our conditions, $X_{\lr,t} = \opu(t) Y_t$, hence
  $\oprho_{\lr}(t) = \opu(t) \ee\bigl[Y_t Y_t^{\dagger}\bigr]
  \opu^{\dagger}(t)$,
  which is consistent with the ansatz used in \cite{LeBris13}. What is
  interesting is that the commuting diagram result is independent of
  any unraveling for \lb{} equation, which somewhat shows that the
  low-rank dynamics we derived from the perspective of dynamical
  low-rank approximation in space of signed measures, preserves the
  structure of \lb{} super-operator $\lbop$.
\end{remark}

\reviewnumtwo{
\begin{remark}
Recall that in Theorem \ref{thm::bound}, we have studied how the error between the original SDE and the low-rank approximation from \thismethodshort{} method propagates with respect to time. For the case of unraveling of \lb{} equation,
by using Lemma \ref{lemma::generator} and Lemma \ref{lemma::lb_unravel}, 
	one could show that the growth rate for the error between the unraveling scheme and our corresponding low-rank approximation is bounded by
	\[\gamma(t)  = \Norm{\lbop}_{\hs}:= \sup_{\opf = \opf^{\dagger},\ \Norm{\opf}_{\hs} = 1} \Norm{\lbop(\opf)}_{\hs}. \]
\end{remark}}

\subsection{Discussion on trace-preserving restrictions}\label{sub:trace}

In \cite{LeBris13, LeBris15},  the trace of low-rank approximated density matrix is required to be one, along the time-evolution. However, 
we did not consider such condition above in this section,
nor in the derivation of low-rank dynamics in \thmref{thm::fx=xx} via \thismethodshort{} method. 
One natural question is that what happens if trace-preserving condition is imposed in \thismethodshort{} method.
In our setting up and for the \lb{} equation case,
the trace-preserving constraint in \thismethodshort{} method should be
\begin{equation*}
\tr\bigl(\oprho_{\lr}(t)\bigr) \equiv \tr\left(\opu(t) \ee_{\theta_t}\bigl[y y^{\dagger}\bigr] \opu^{\dagger}(t)\right) = 1, \qquad \text{or equivalently} \qquad \tr\left(\ee_{\theta_t}\bigl[yy^{\dagger}\bigr]\right) = 1,
\end{equation*}
because $\opu^{\dagger}(t) \opu(t) = \id_{r\times r}$.
From the constraint that $\frac{\ud }{\ud t} \tr\left(\ee_{\theta_t}\bigl[yy^{\dagger}\bigr]\right) = 0$, one could derive that 
\begin{equation}
\label{eqn::trace_constraint}
 \tr\left( \ee_{\theta_t}\bigl[ y A(y, t)^{\dagger} + A(y, t) y^{\dagger} + \sum_{j=1}^{M} B_j(y,t) B_j^{\dagger}(y,t) \bigr]   \right) = 0.
\end{equation}

Recall that in the proof of \thmref{thm::fx=xx}, such constraint does
not affect the first-order stationary condition with respect to
$\opg(t)$, hence $\opg(t)$ still has the same form. Thus, the
optimization problem in the derivation is still
\begin{equation*}
\min_{A,\ \bigl\{B_j\bigr\}_{j=1}^{M} } \Inner{\proj_{\opu(t)} \opc \proj_{\opu(t)}}{\proj_{\opu(t)} \opc \proj_{\opu(t)}},
\end{equation*}
where 
\begin{equation*}
\begin{split}
\proj_{\opu(t)} \opc \proj_{\opu(t)} &= \proj_{\opu(t)} \ee_{\theta_t} \bigl[ \bigl(\opu(t) A(y,t) - a(\opu(t)y,t)\bigr)y^{\dagger} \opu^{\dagger}\bigr] + \hc + \sum_{j=1}^{M} \opu(t) \ee_{\theta_t} \bigl[B_j(y,t) B_j^{\dagger}(y,t)\bigr] \opu^{\dagger}(t) \\
& \qquad  - \sum_{j=1}^{N}\proj_{\opu(t)} \ee_{\theta_t} \bigl[b_j(\opu(t)y,t) b_j^{\dagger}(\opu(t)y,t)\bigr] \proj_{\opu(t)} \\
&= \opu(t) \ee_{\theta_t}\left[ A(y,t) y^{\dagger} + y A^{\dagger}(y,t) + \sum_{j=1}^{M} B_j(y,t) B_j^{\dagger}(y,t) \right] \opu^{\dagger}(t) \\
& \qquad - \proj_{\opu(t)} \ee_{\theta_t}\left[ a(\opu(t) y, t) (\opu(t) y)^{\dagger} + (\opu(t) y) a^{\dagger}(\opu(t) y, t) + \sum_{j=1}^{N} b_j(\opu(t) y, t) b_j^{\dagger}(\opu(t) y, t) \right]\proj_{\opu(t)} \\
&= \opu(t) \ee_{\theta_t}\left[ A(y,t) y^{\dagger} + y A^{\dagger}(y,t) + \sum_{j=1}^{M} B_j(y,t) B_j^{\dagger}(y,t) \right] \opu^{\dagger}(t)  - \proj_{\opu(t)} \lbop\left(\opu(t) \ee_{\theta_t}\bigl[yy^{\dagger}\bigr]\opu^{\dagger}(t)\right) \proj_{\opu(t)}.
\end{split}
\end{equation*}
In the last step, we have used \eqrefn{eqn::lb_equiv_cond}. 

If we assume it is possible to achieve
$\proj_{\opu(t)} \opc \proj_{\opu(t)} = 0$, then
\begin{equation*}
\ee_{\theta_t}\left[ A(y,t) y^{\dagger} + y A^{\dagger}(y,t) + \sum_{j=1}^{M} B_j(y,t) B_j^{\dagger}(y,t) \right]   = \opu^{\dagger}(t) \lbop\left(\opu(t) \ee_{\theta_t}\bigl[yy^{\dagger}\bigr]\opu^{\dagger}(t)\right) \opu(t).
\end{equation*}
The constraint in \eqrefn{eqn::trace_constraint} 
requires the the trace of left hand side is zero, while on the right hand side, in general, 
\begin{equation*}
\tr\left\{ \opu^{\dagger}(t) \lbop\left(\opu(t) \ee_{\theta_t}\bigl[yy^{\dagger}\bigr]\opu^{\dagger}(t)\right) \opu(t)\right\} = \tr\left\{ \proj_{\opu(t)}  \lbop\left(\opu(t) \ee_{\theta_t}\bigl[yy^{\dagger}\bigr]\opu^{\dagger}(t)\right) \right\} \neq 0,
\end{equation*}
even though $\tr\left\{ \lbop\left(\opu(t) \ee_{\theta_t}\bigl[yy^{\dagger}\bigr]\opu^{\dagger}(t)\right) \right\} \equiv 0$. One could conclude that in general, the minimization problem 
\begin{equation*}
\min_{A,\ \bigl\{B_j\bigr\}_{j=1}^{M} } \Inner{\proj_{\opu(t)} \opc \proj_{\opu(t)}}{\proj_{\opu(t)} \opc \proj_{\opu(t)}} > 0, 
\end{equation*}
under the constraint in \eqrefn{eqn::trace_constraint}.
This could be anticipated since one has to work on a smaller space during minimization. 
Hence, in general, the low-rank unraveling from \thismethodshort{} with trace-preserving constraint is not optimal in the sense of \thmref{thm::fx=xx}.

In fact, from the optimization problem,
it is not straightforward how to minimize the $\Inner{\proj_{\opu(t)} \opc \proj_{\opu(t)}}{\proj_{\opu(t)} \opc \proj_{\opu(t)}}$ under the trace constraint for functions $A$ and $B_j$. After all, we need to work on the quadratic variational problem on functional space with trace constraint, not on matrices as in \cite{LeBris13}. 

While it is perhaps desirable to have a trace-preserving dynamical
low-rank approximation of the density matrix, 
\reviewnumtwo{our choice of not considering trace-preserving constraint can be justified via better approximating measurement outcome.} 
Since the expected
measurement outcome for observable $\opo$ is
$\tr(\opo \oprho) = \Inner{\opo}{\oprho}$, to get an accurate
approximation of the expectation, it is sufficient that
$\oprho_{\lr}(t)$ is close to $\oprho(t)$ in Hilbert-Schmidt norm,
without the requirement of trace-preserving constraint. One could also
normalize the resulting density matrix from the \thismethodshort{} as
a postprocessing step.

\subsection{Methods selection and control variate}

From the commuting diagram, one might question the usefulness of low-rank unraveling in practice. Since in the low-rank unraveling, at each time step, one needs to store $\mathcal{O}(\dimn\times r  + N_s\times r)$ data for $\opu(t)$ and random variable $Y$, where $N_s$ is the sample size. For the deterministic low-rank dynamics, it only needs to store $\mathcal{O}(\dimn\times r + r^2)$ for $\LU(t)$ and $\LSIG(t)$.

When $r = \mathcal{O}(1)$ is useful to approximate the full dynamics,
solving the deterministic low-rank approximation of \lb{}
equation is a better choice as one does not need to simulate many
sample paths to get statistical averages and the simulation of
deterministic low-rank dynamics has smaller memory and computational cost.

On the other hand, when rank $r$ requires to be large in order to
approximate the system accurately, then it becomes inefficient to
solve the deterministic low-rank approximation for \lb{} equation. It
is advantageous to turn to stochastic approximation and to use
unraveling of \lb{} equation.

However, one could consider using control variate method \cite{LeBris15} to facilitate the simulation, that is, to use
\begin{equation*}
\overline{\oprho} = \overline{\oprho}_{MC} + \lambda({\oprho}_{\lr}  - \overline{\oprho}_{\text{LRMC}}), 
\end{equation*}
where $\overline{\oprho}_{MC}$ is obtained via unraveling scheme;
$\oprho_{\lr}$ is obtained by solving deterministic low-rank dynamics
of \lb{} equation; $\overline{\oprho}_{\text{LRMC}}$ is obtained by
solving low-rank unraveling scheme. The rank $r = \mathcal{O}(1)$ to
ensure that one could simulate both deterministic and stochastic
low-rank dynamics. The low-rank dynamics cannot provide accurate
approximation, however, one could still simulate low-rank dynamics to
achieve reduction of variance, by choosing correct parameter
$\lambda$. Please refer to \cite{LeBris15} for details and numerical performance.

Generally speaking, for Fokker-Planck equation and \lb{} equation
with continuum state space (e.g., probability on $\mathbb{R}^d$ or
density matrix over $L^2(\mathbb{R}^d)$), one would not want to solve
a deterministic low-rank dynamics for them directly, since that is
still a PDE in potentially high dimension when $r$ is not so small
(even when $r = 6$, solving such a PDE in $6$ dimension is already
rather challenging with standard methods). When the low-rank
approximation is accurate, one could choose to use low-rank SDE
(derived by \thismethodshort{} method) to achieve reduction of
complexity in model; otherwise, one has to simulate the original
SDE. Of course, this discussion only involves which model to solve;
detailed numerical methods and algorithmic implementation would still
make a significant difference to the overall performance.

%------------------------------------------------------------------%
% Numerical experiment
%------------------------------------------------------------------%

\section{Numerical experiments}
\label{sec::numerics}

In this section, we will validate our method using numerical examples
of some high-dimensional SDEs: high-dimensional geometric Brownian
motion, stochastic Burgers' equation 
and unraveling of quantum damped
harmonic oscillator.  For the first two examples, we will also compare
\thismethodshort{} method with DO method.  It could be observed that
\thismethodshort{} method has comparable performance in approximating
the mean when the rank is chosen correctly, compared with DO method;
and it performs better in approximating the second moment.
We will measure the relative error as the indicator
of performance of low rank approximation: 
\begin{itemize}
\item the relative error for the
mean (linear) is defined as
$\Norm{\ee_{\mu_t} \bigl[x\bigr] -
  \ee_{\mu_{\lr,t}}\bigl[x\bigr]}/\Norm{\ee_{\mu_t} \bigl[x\bigr]}$\,;
\item
the relative error for the second moment (quadratic) is defined
as
$\Norm{ \ee_{\mu_{t}} \bigl[xx^{\dagger}\bigr] -
  \ee_{\mu_{\lr,t}}\bigl[
  xx^{\dagger}\bigr]}_{\hs}/\Norm{\ee_{\mu_t}\bigl[xx^{\dagger}\bigr]}_{\hs}$\,.
\end{itemize}

Let us comment on some details of the numerical implementation. 
For simplicity, Euler-Maruyama method is used as stochastic
integrator.  An order-one deterministic numerical scheme in
\cite{Higham96} is employed to preserve orthogonality of $\opu(t)$. The
inverse of $\ee_{\theta_t} \bigl[ yy^{\dagger}\bigr]$ in
\eqrefn{eqn::low_rank_xx} will cause numerical instability when its
condition number is large. This problem also appears in dynamical
orthogonal method \cite{Sapsis09}. Paper \cite{Babaee17} suggested to use pseudo-inverse to maintain the
algorithmic stability, which is also adopted here in the numerical
simulation.  If the chosen rank is representative (not over-estimating
the rank), then the pseudo-inverse should be simply the matrix
inverse.

\subsection{Geometric Brownian motion}

Consider the geometric Brownian motion of the form
\begin{equation*}
\ud X_t = \vect{\Lambda} X_t\ud t + \vect{\Theta} X_t\ud W_t, 
\end{equation*} 
where $X_t \in \Complex^{20}$, $\vect{\Lambda}$,
$\vect{\Theta} \in \Complex^{20\times 20}$.

% Test the mean-square stable cases.
A rank-5 initial condition is used with $X_{0} = x_k$ with probability
$p_k$ where $\{x_k\}_{k=1}^{5}$ are randomly generated orthogonal
vectors in $\Complex^{20}$ and $p_k \propto  \text{Poisson}(k-1,0.5)$ for
$1\le k \le 5$. $\text{Poisson}(k,\lambda)$ represents the probability
density function of Poisson distribution with rate $\lambda$ at value
$k$. $\vect{\Lambda}$ is of the form $\vect{Q} \opd \vect{Q}^{\dagger}$ where $\opd$ is a
randomly generated diagonal matrix with diagonal elements uniformly
distributed in the interval $[-4.5, -0.5]$ and $\vect{Q}$ is a randomly
generated orthogonal matrix;
$\vect{\Theta} = \sqrt{0.05}\, \id_{20 \times 20}$.  It is not difficult to
prove that for such geometric Brownian motion, the second moment
$ \ee_{\mu_t} \bigl[xx^{\dagger}\bigr]$ decays to $0$ as
$t\rightarrow\infty$ (known as mean-square stability).  The result is
visualized in Figures~\ref{fig::gbm_spectrum}, \ref{fig::gbm_SDLR}  for both \thismethodshort{} method and DO
method.  It should be remarked that to faithfully compare
\thismethodshort{} with DO under the same rank $r$, \eg, $r = 5$, the
matrix $\opu(t)$ in \thismethodshort{} has dimension $20\times 5$,
whereas it has dimension $20\times 4$ for DO method since $\bar{X}$
for DO method should account for one rank and contributes $n=20$
degrees of freedom. 

In numerical experiment, the sample size is $10^5$ and time step is $\frac{1}{300}$. From \figref{fig::gbm_spectrum}, the mean-square stability is clearly observed. In \figref{fig::gbm_SDLR}, for \thismethodshort{} method, when rank increases, the relative error decreases for both mean and second moment. Since the fifth eigenvalue is extremely small compared with others, it is reasonable that choosing rank $5$ does not significantly improve the accuracy further. 
 For DO method, even for small rank, the relative error of mean  is small; the relative error for the second moment decreases as rank increases, which is expected.
As can be seen, DO method captures the mean better 
and \thismethodshort{} method captures the second moment better. 
This finding is consistent with the theoretical derivation.

\begin{figure}[h!]
\centering
\includegraphics[width=0.48\textwidth]{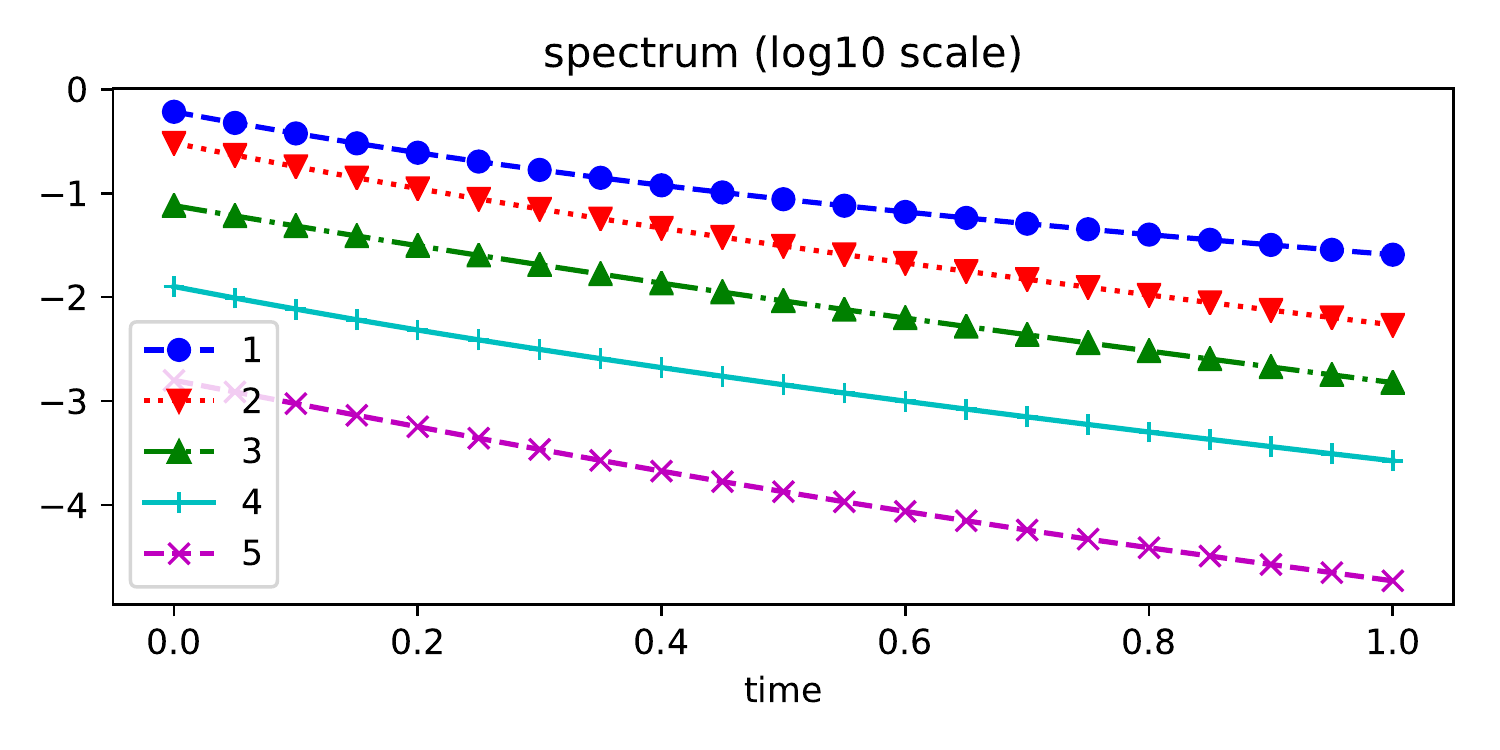}

\caption{Spectrum of $\ee_{\mu_t} \bigl[xx^{\dagger}\bigr]$ in $\log 10$ scale (the five largest eigenvalues) for high-dimensional geometric Brownian motion.}
\label{fig::gbm_spectrum}
\end{figure}

\begin{figure}[h!]
\centering
\begin{subfigure}[b]{0.48\textwidth}
\includegraphics[width=\textwidth]{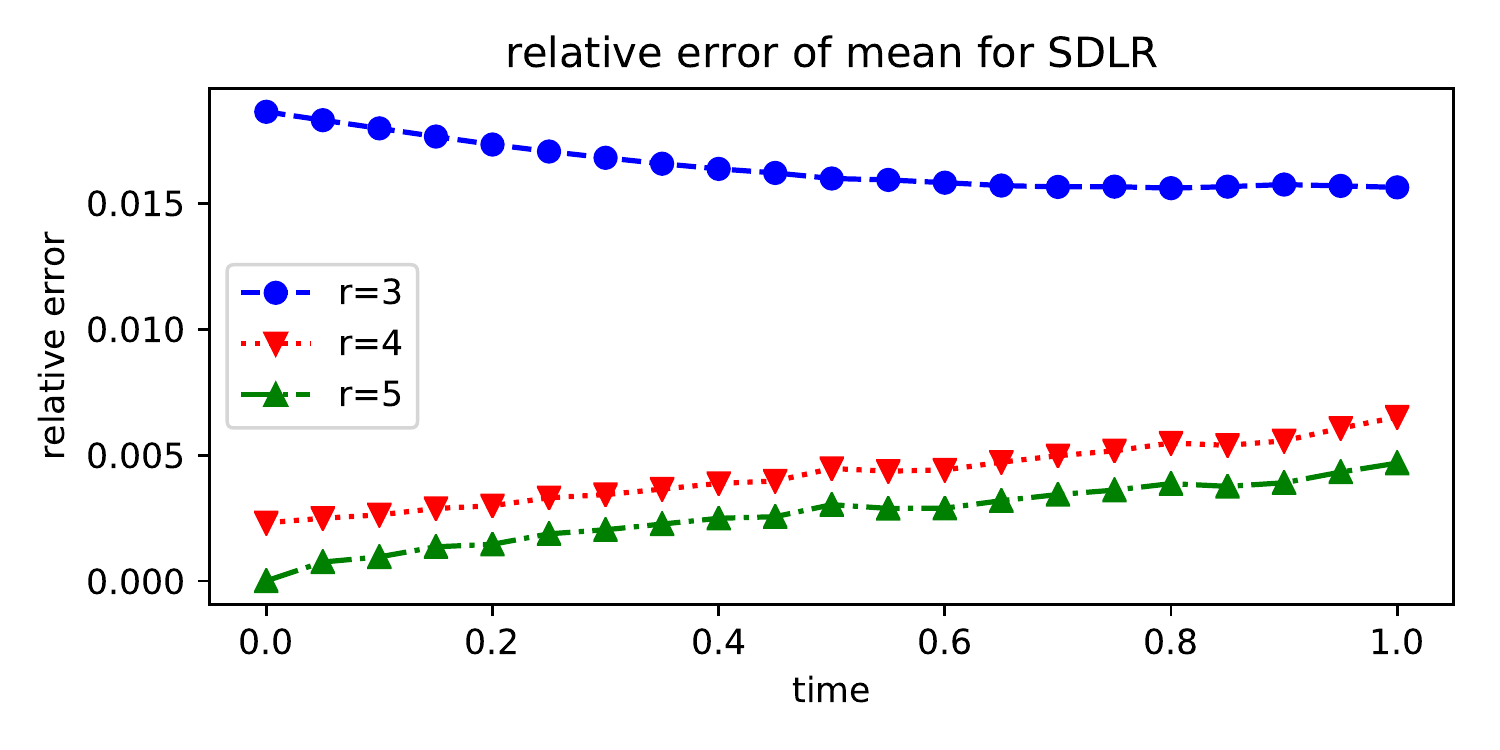}
\end{subfigure}
~
\begin{subfigure}[b]{0.48\textwidth}
\includegraphics[width=\textwidth]{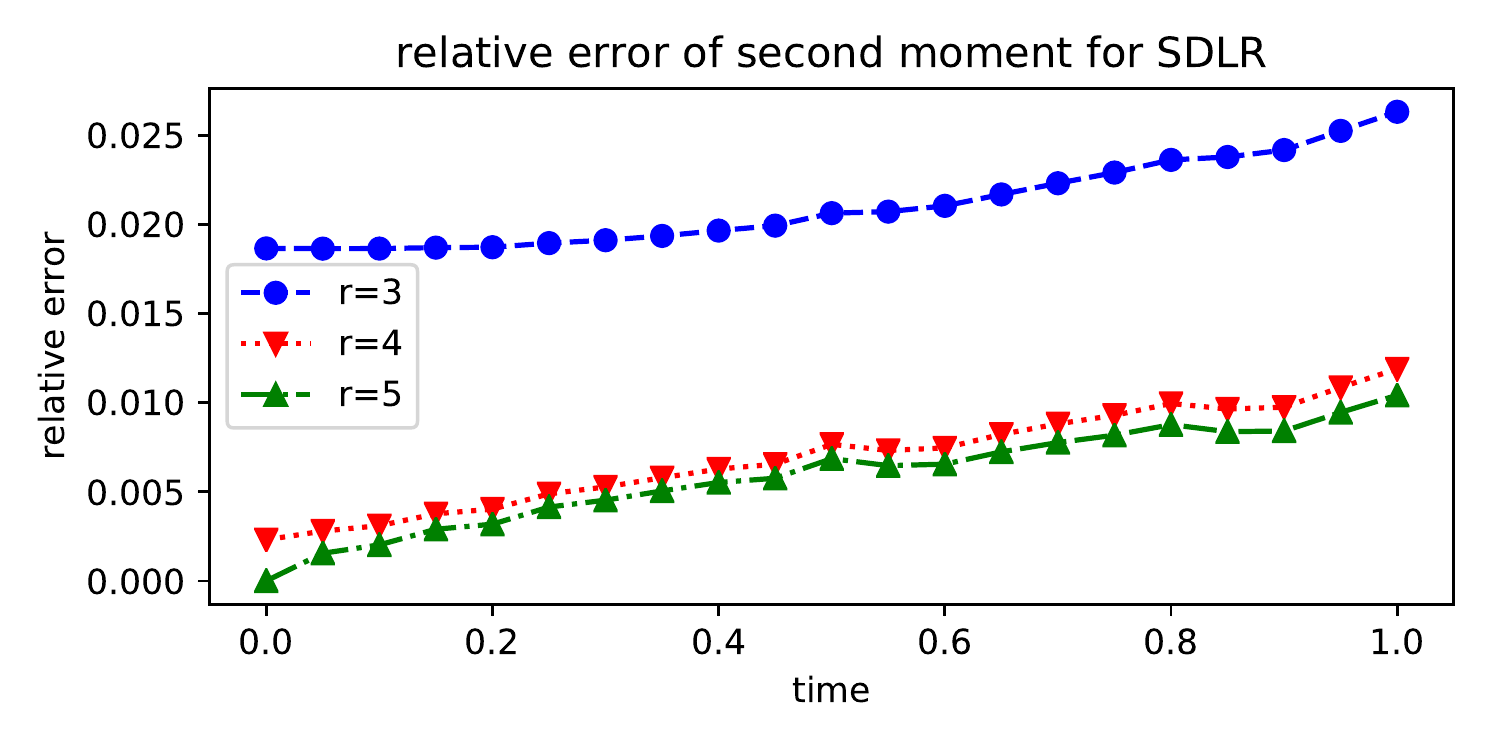}
\end{subfigure}
~
\begin{subfigure}[b]{0.48\textwidth}
\includegraphics[width=\textwidth]{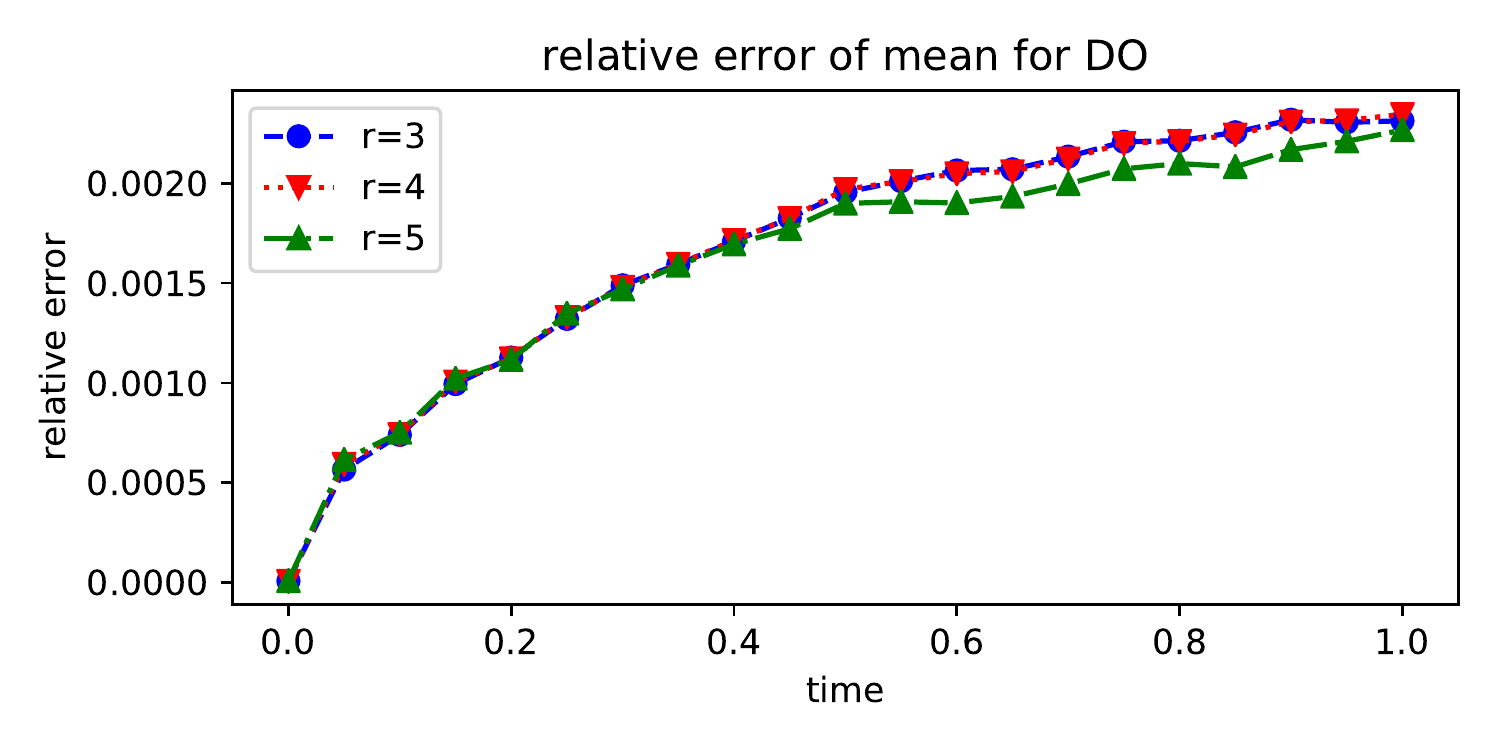}
\end{subfigure}
~
\begin{subfigure}[b]{0.48\textwidth}
\includegraphics[width=\textwidth]{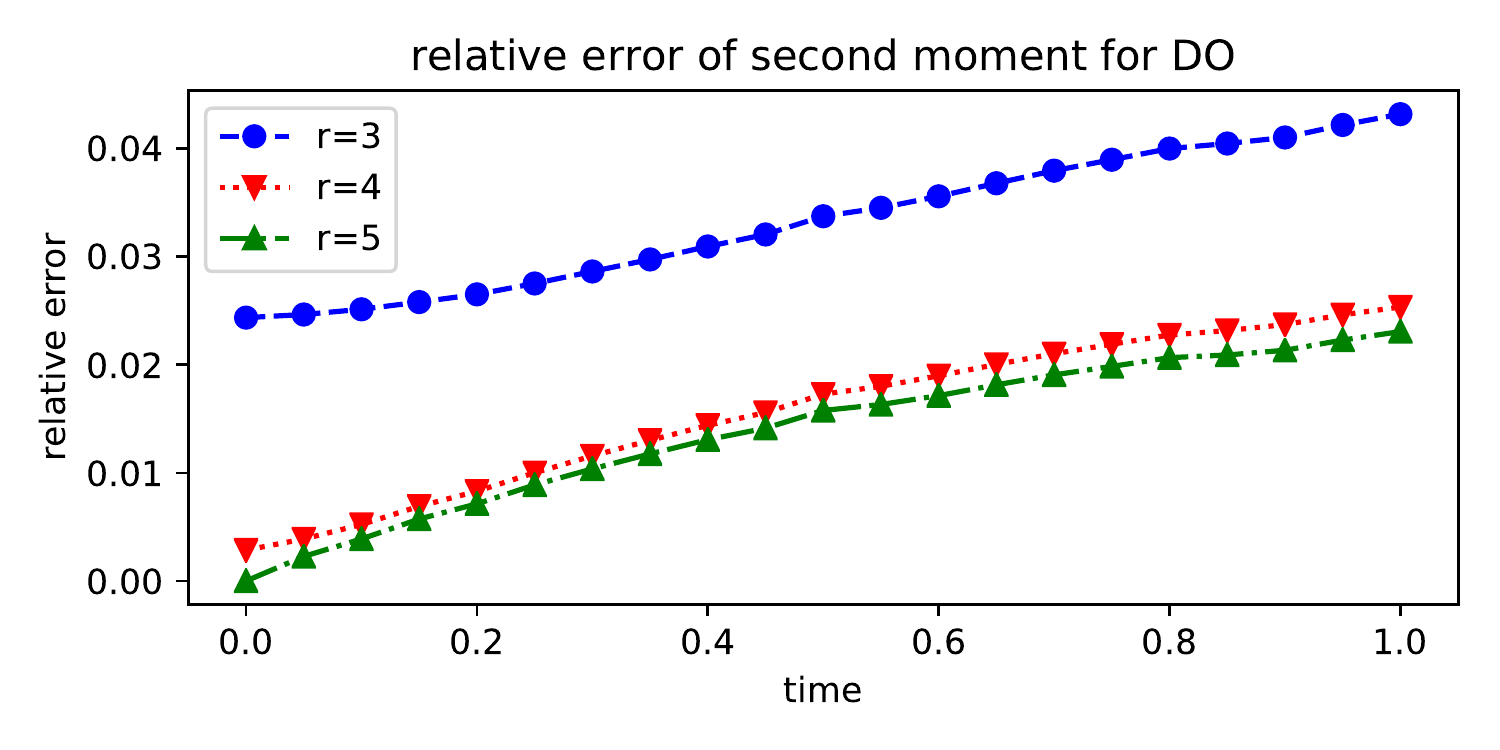}
\end{subfigure}

\caption{Relative error for \thismethodshort{} method and DO method in solving high-dimensional geometric Brownian motion.}
\label{fig::gbm_SDLR}
\end{figure}

\subsection{Stochastic Burgers' equation}

We will use the stochastic Burgers' equation $h(z, t,\omega)$ of the form
\begin{equation}
\begin{split}
& \ud h = (\nu \partial_{z}^2 h - h \partial_z h) \ud t + g(z) \ud W \\
& h(z, 0, \omega) = h_0(z, \omega) \qquad h(0, t, \omega) = h(1, t, \omega) ,
\end{split}
\end{equation}
for $(z, t) \in [0,1]\times [0,T]$. This example is adapted from \cite{Ozen16_SPDE}. Notice that $\ud W$ is chosen as a scalar Brownian motion, independent of spatial coordinate $z$.

Due to the periodic boundary condition, by separating $(t, \omega)$ from variable $z$,
\[ h(z,t,\omega) = \sum_{k\in \Int} X_k(t,\omega) e^{2\pi i k z}. \]
Then stochastic Burgers' equation could be viewed as a SDE on Hilbert space $L^2[0,1]$ with basis functions $\{e^{2\pi i k z} \}_{k\in \Int}$. The SDE has the form
\begin{equation}
\begin{split}
\ud X_{k}(t,\omega) &= \left(- (2\pi k)^2 \nu X_{k}(t,\omega) - \sum_{k'} X_{k-k'}(t,\omega) X_{k'}(t,\omega) (2\pi i k') \right)\ \ud t + g_k\ \ud W \\
X_{k}(0,\omega) &= \Inner{e^{2\pi i k z}}{h_0(z,\omega)} \qquad g_k = \Inner{e^{2\pi i k z }}{g(z)},
\end{split}
\end{equation}
where the inner product $\Inner{f_1}{f_2} = \int_{0}^{1} f_1^{*}(z) f_2(z)\ \ud z$ and $\ud W$ is independent of the mode $k$.

In the direct numerical simulation, we truncate $k$ by letting $X_{k}(t,\omega) = 0$ for $|k|> \frac{n-1}{2}$ where $n$ is an odd positive integer.  Then $X_{k}(t,\omega)$ can be stored in a $\Complex^{n}$ vector. 
%This is essentially Galerkin method with fixed basis. 
With careful choice of initial condition and let $n\rightarrow \infty$, we would expect to have the solution of such truncated SDE converge to the true solution. 
Adapted from Example 4.1 in Ref.~\cite{Ozen16_SPDE}, let $\nu = 0.01$, 
and \[g(z) = \gamma \cos(2\pi z) = \frac{\gamma}{2} \left(e^{2\pi i z} + e^{2\pi i (-1) z}\right), \qquad \gamma = \frac{1}{10}. \]

As for initial condition, a rank-5 case is considered
\[h_0(z, \cdot) = \left\{ \begin{split}
1, &\qquad  \text{with probability } p_1 \\
\sqrt{2} \sin(2\pi \floor{k/2} z), &\qquad \text{with probability } p_k,\ k = 2, 4\\
\sqrt{2} \cos(2\pi \floor{k/2} z), &\qquad \text{with probability } p_k,\ k = 3, 5 \\ 
\end{split}  \right.\]
where $p_k\propto \text{Poisson}(k-1,0.5)$ just like last example and $\floor{ \cdot }$ is the floor function.

In \figref{fig::sbe_dimension}, it could be observed that the numerical result is stable with respect to dimension for truncation, \ie, $n$. Therefore, it is justifiable to simply solve the truncated system with $n = 21$ by \thismethodshort{} and DO method.

\begin{figure}[h!]
\centering
\includegraphics[width=0.48 \textwidth]{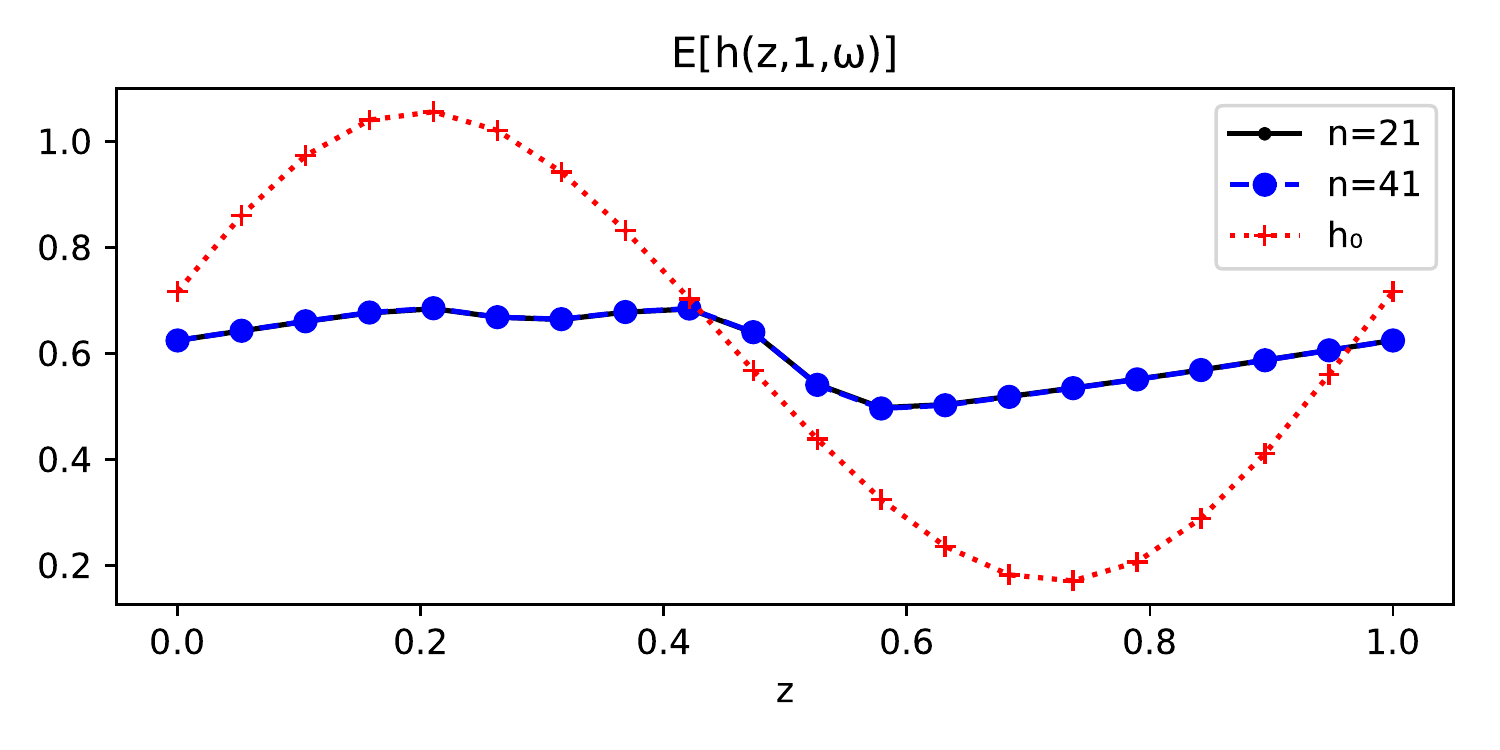}
\caption{This figure shows the expected function $\ee[h(z,1, \omega)]$ on $z\in [0,1]$, for different truncated dimension $n$, in solving stochastic Burgers' equation.}
\label{fig::sbe_dimension}
\end{figure}

For $n = 21$, sample size $10^4$, time step $\frac{1}{200}$, the spectrum and relative error are visualized in Figures \ref{fig::sbe_spec} and \ref{fig::sbe_error} respectively. From the spectrum, the second  moment tends to behave like a rank one matrix, since the largest eigenvalue almost keeps a constant while other eigenvalues roughly exponentially decay. 
In \figref{fig::sbe_error}, for both methods, when rank increases, the relative error decreases, 
which is consistent with expectation.
It could be seen that 
the performance of \thismethodshort{} method and DO method is similar and comparable,  
in calculating both $\ee_{\mu_t} \bigl[x\bigr]$ and $\ee_{\mu_t} \bigl[xx^{\dagger}\bigr]$. 
As for more detailed performance comparison between these two methods, \figref{fig::sbe_error} is not very informative, especially for second moment. 
Therefore, we additionally provide Table \ref{table::error_SBE}, 
which gives the ratio between relative error for \thismethodshort{} method and relative error for DO method at time $t = 1$, for various rank $r$. 
Then, it could be observed that DO method performs better in approximating the mean and \thismethodshort{} method performs better in approximating the second moment.
As a reminder, the result in Table \ref{table::error_SBE}  can only be interpreted qualitatively, due to random fluctuation in simulation.

\begin{figure}
\centering
\includegraphics[width=0.48\textwidth]{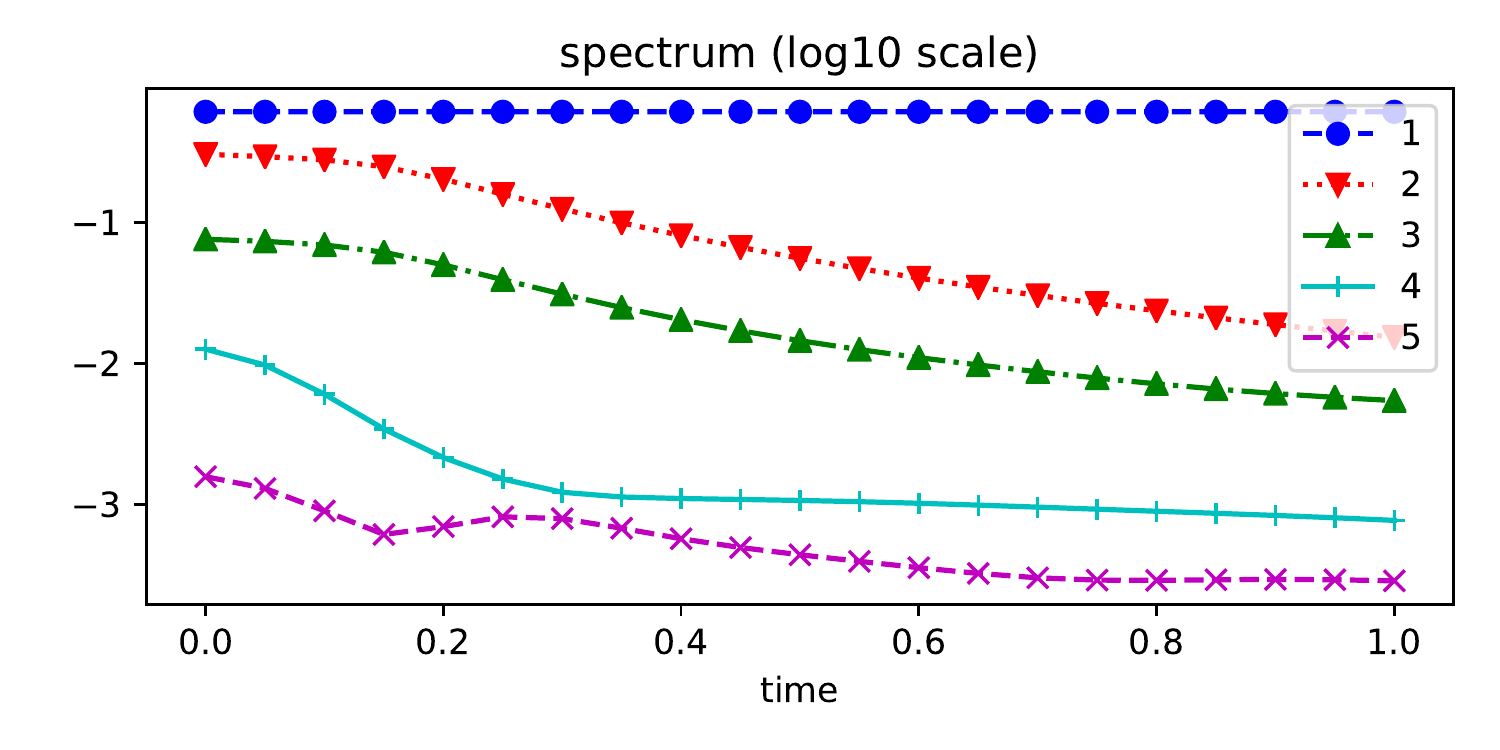}
\caption{Spectrum of $\ee_{\mu_t} \bigl[xx^{\dagger}\bigr]$ in $\log 10$ scale (the five largest eigenvalues) for stochastic Burgers' equation.}
\label{fig::sbe_spec}
\end{figure}

\begin{figure}
\centering
\begin{subfigure}[b]{0.48\textwidth}
\includegraphics[width=\textwidth]{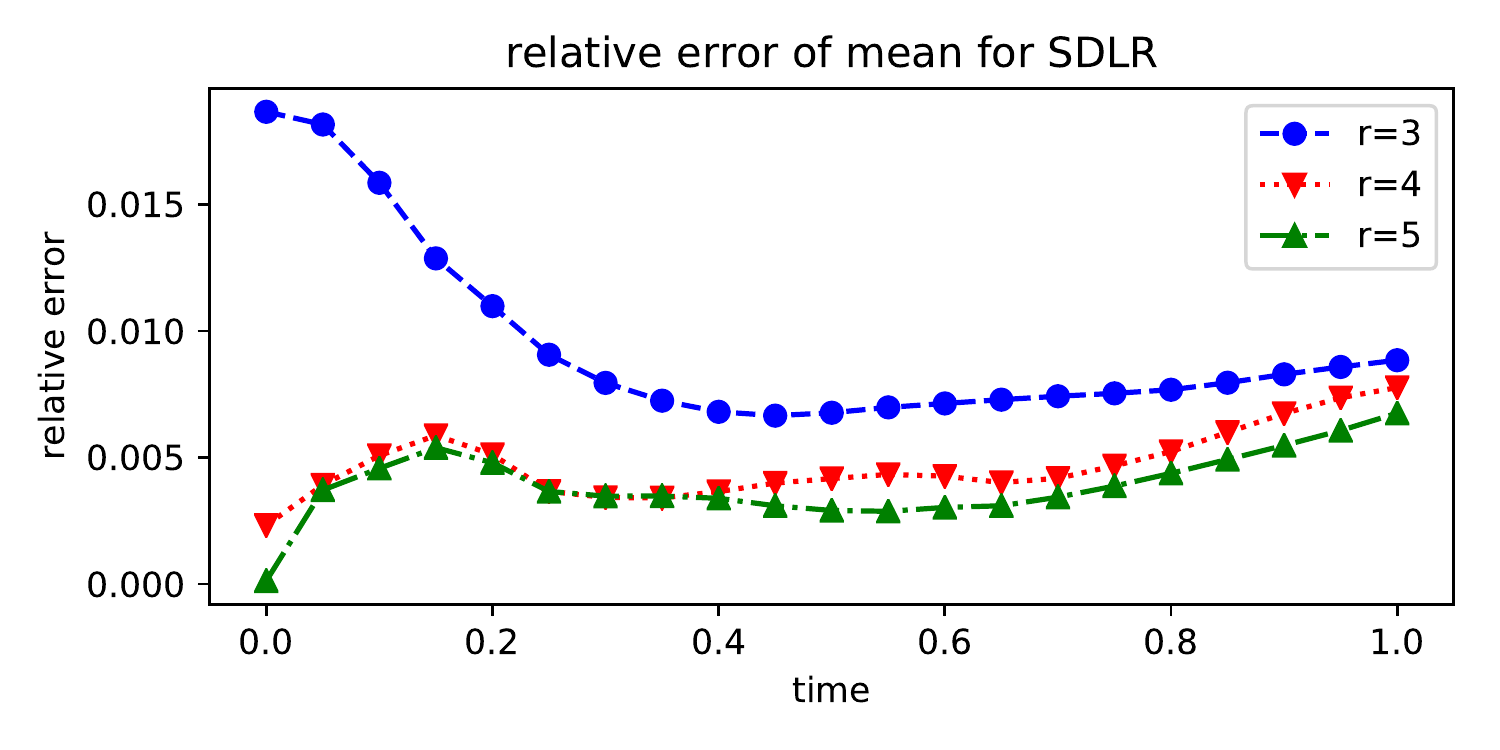}
\end{subfigure}
~
\begin{subfigure}[b]{0.48\textwidth}
\includegraphics[width=\textwidth]{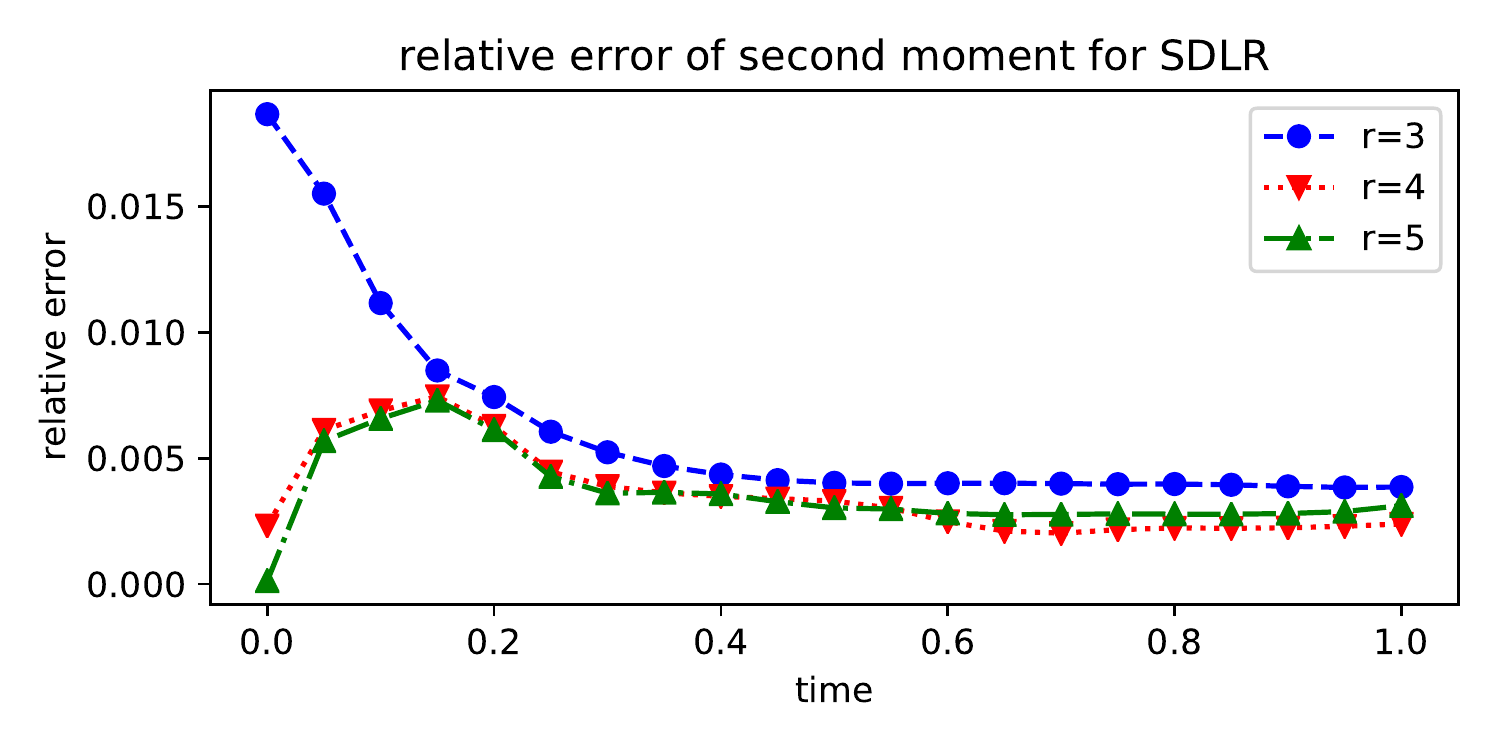}
\end{subfigure}
~
\begin{subfigure}[b]{0.48\textwidth}
\includegraphics[width=\textwidth]{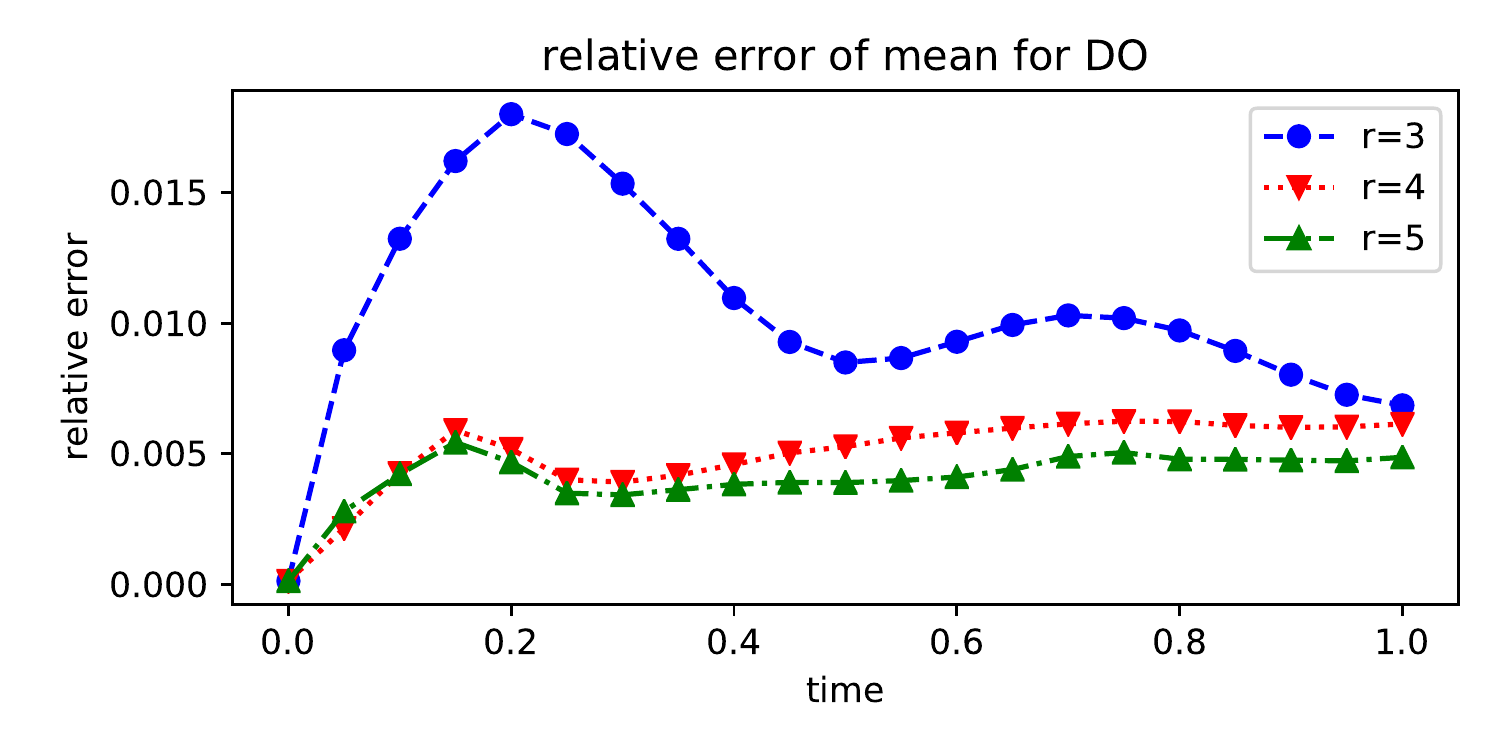}
\end{subfigure}
~
\begin{subfigure}[b]{0.48\textwidth}
\includegraphics[width=\textwidth]{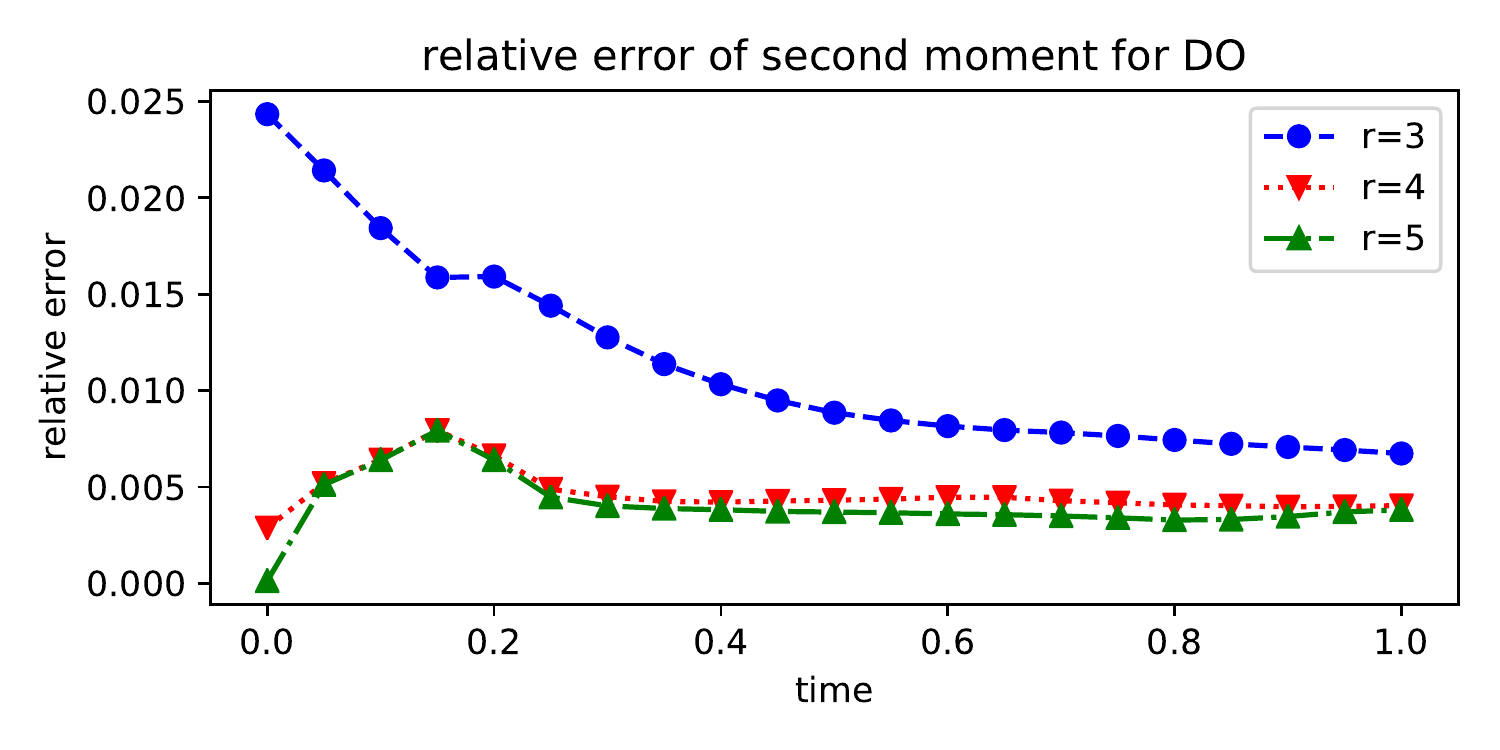}
\end{subfigure}
\caption{Relative error for \thismethodshort{} method and DO method in solving stochastic Burgers' equation.}
\label{fig::sbe_error}
\end{figure}

\begin{table}
\centering
\begin{tabular}{c|c c}
\toprule
$r$ & ratio for relative error of $\ee_{\mu_t}\big[x\bigr]$ & ratio for relative error of $\ee_{\mu_t} \big[xx^{\dagger}\bigr]$ \\
\toprule
3	&   1.291	&  0.572 	\\
4	&   1.264	&  0.593	\\
5	&   1.389	&  0.816	\\
\bottomrule
\end{tabular}
\caption{This table summarizes the ratio between  relative error for \thismethodshort{} method and the relative error for DO method, at time $t = 1$ for stochastic Burgers' equation.}
\label{table::error_SBE}
\end{table}

\subsection{Quantum damped harmonic oscillator}

This example is to solve a simple quantum damped harmonic oscillator, see \eg,  \cite{Fujii12}. The \lb{} equation is given by
%%
\begin{comment}
Notation change: $\mu$ to $\gamma_1$; $\nu$ to $\gamma_2$. 
\end{comment}
%%
\begin{equation}
\dot{\oprho} = -i \Comm{ \omega \vect{d}^{\dagger} \vect{d}}{ \oprho } +
 \gamma_1 \left( \vect{d} \oprho \vect{d}^{\dagger} - \frac{1}{2} \Anticomm{\vect{d}^{\dagger} \vect{d}}{ \oprho}  \right) + 
 \gamma_2 \left( \vect{d}^{\dagger}\oprho \vect{d} - \frac{1}{2} \Anticomm{\vect{d} \vect{d}^{\dagger}}{\oprho} \right),
\end{equation}
where $\omega$ is angular frequency, $\vect{d}^{(\dagger)}$ are annihilation (creation) operator for harmonic oscillator. 
Adopting the bra-ket notation, let $\left\{\ \Ket{k}\ \right\}_{k=0, 1, \cdots, n-1}$ be the orthonormal basis of quantum states. Then the effect of operator $\vect{d}^{(\dagger)}$ is $\vect{d}^{\dagger}\ket{k} = \sqrt{k+1}\ket{k+1}$ and $\vect{d}\ket{k} = \sqrt{k} \ket{k-1}$. The special state $\Ket{0}$ usually refers to ground state and $\Ket{k}$ ($k\ge 1$) are known as excited states, in physical literatures.

We truncate the system by $n=21$ states, \ie, from ground state $\ket{0}$ to excited state $\ket{20}$. Set $\omega = 1.0$ and initial condition is chosen as $\psi(0) = \ket{k}$ with probability $p_k\propto \text{Poissson}(k, 0.5)$ for $k=0,1,\cdots, 4$; hence the density matrix $\oprho(0)$ is again a rank-5 matrix. 
This model has been tested for two parameter sets $\gamma_1 = 0.2$, $\gamma_2 = 0$ and $\gamma_1 = 0$, $\gamma_2 = 0.2$. 
In the first case, 
the environment acts as an annihilation operator to the system so that the system is moving to a lower energy state, whereas in the second case, the environment acts as a creation operator so that the system is moving up to a higher energy state. 
The above \lb{} equation is solved by both QSD and LQSD unraveling schemes, as well as the \thismethodshort{} method for QSD and LQSD correspondingly (see  \secref{sec::discuss::unravel} for expressions of QSD and LQSD).  For this example, the solution of \lb{} equation via numerical ODE integrator is used as the reference and treated as the exact solution.

The sample size is $3\times 10^5$ and time step is $\frac{1}{500}$. The results are visualized in \figref{fig::qdho_test1} and \ref{fig::qdho_test2}. The spectrum is consistent with the physical intuition. In \figref{fig::qdho_test1}, the highest eigenvalue  roughly indicates the probability at ground state $\ket{0}$, which steadily increases. This is consistent with the functioning of annihilation operator, \ie, moving the quantum state to lower energy state. The spectrum in  \figref{fig::qdho_test2} can be explained in a similar way. 
When the rank increases from 3 to 5, relative error decreases in all figures. The solution at rank $5$ is comparable with the original unraveling SDE system (\ie, $r=21$). 
In the first case $\gamma_1 = 0.2$ and $\gamma_2 = 0.0$, since the system is lowering down to ground state, it is expected that low-rank approximation should work better.

\begin{figure}[h]
  \centering
  \parbox{\figrasterwd}{
    \parbox{.45\figrasterwd}{%
      \subcaptionbox{}{\includegraphics[width=\hsize]{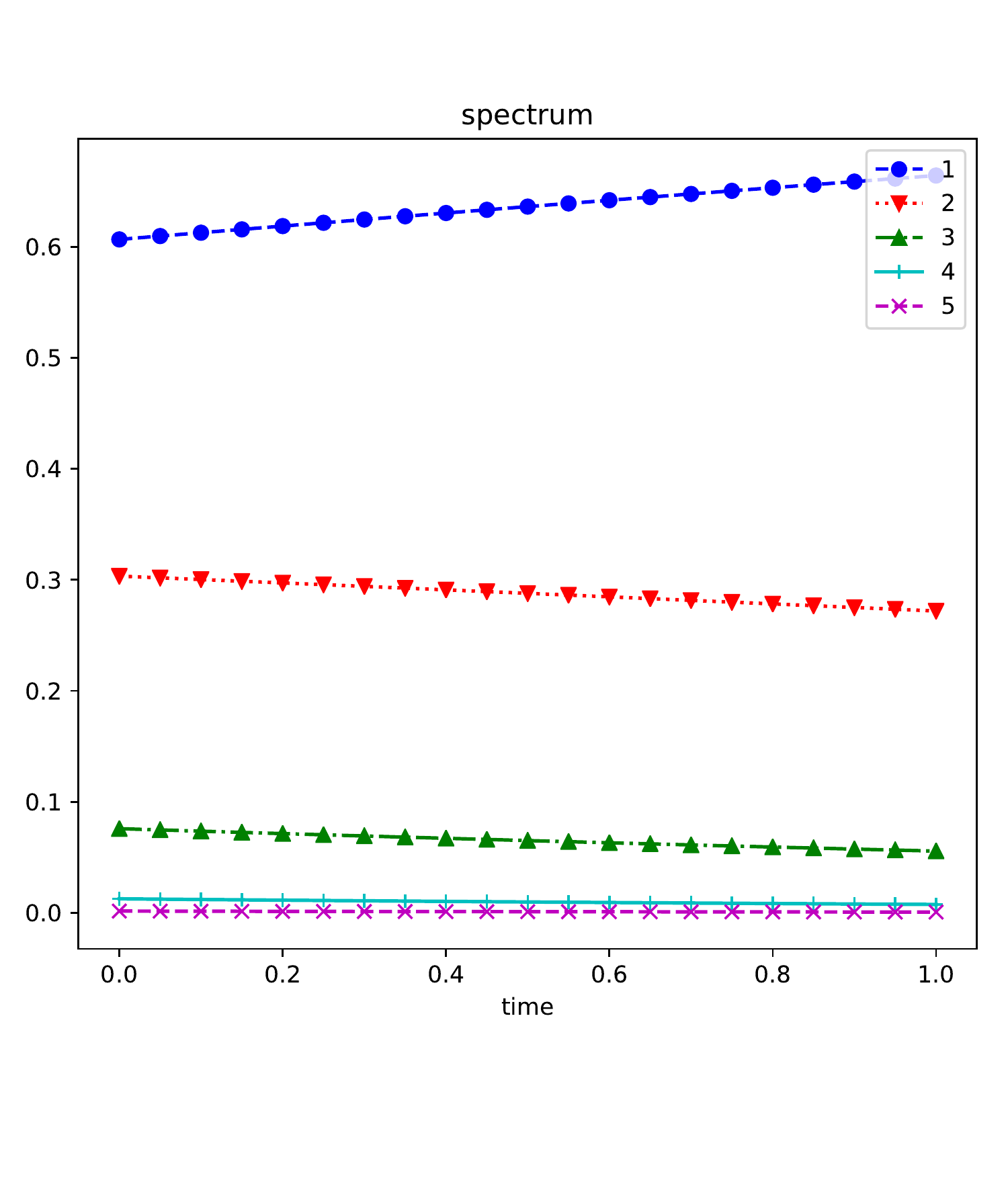}}
      }
    \parbox{.48\figrasterwd}{%
      {\includegraphics[width=\hsize]{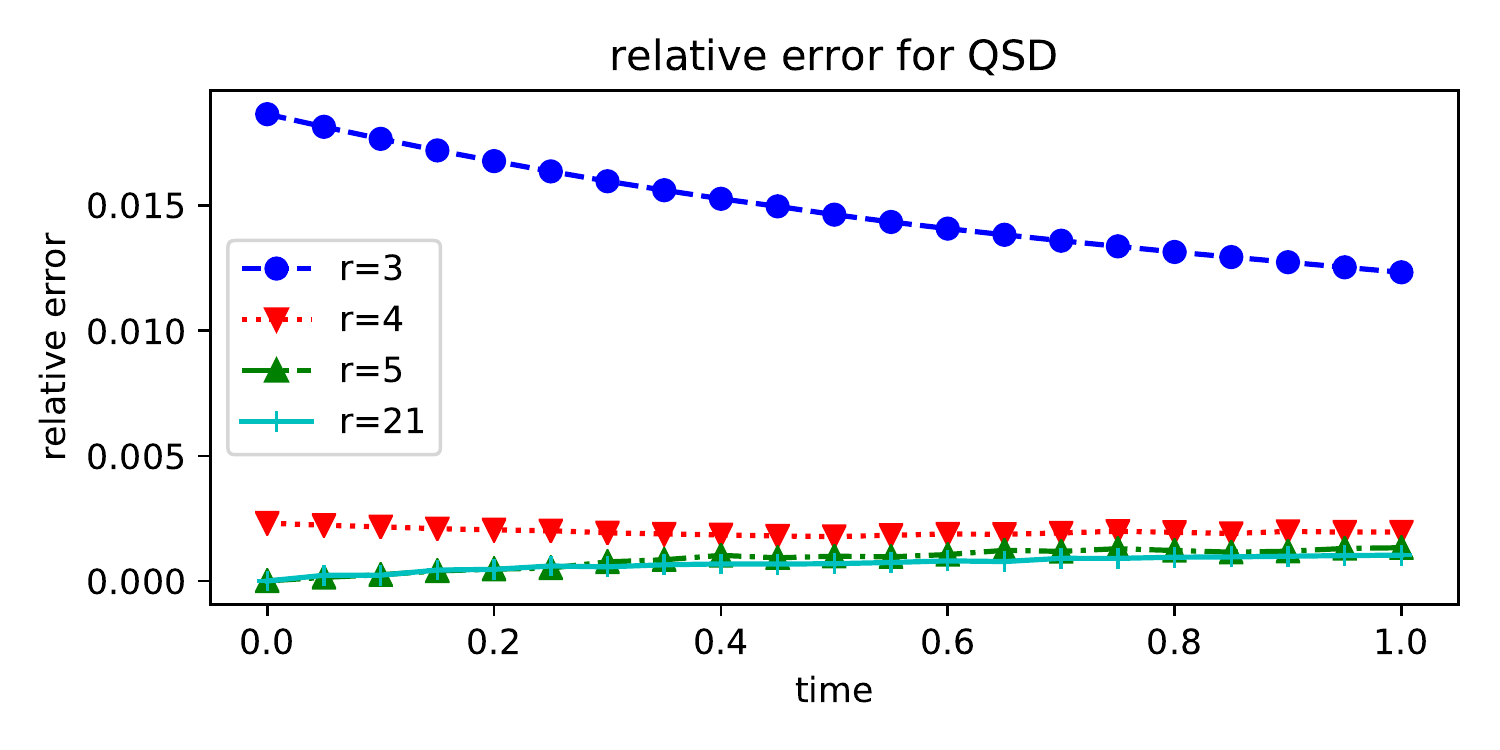}}
      {\includegraphics[width=\hsize]{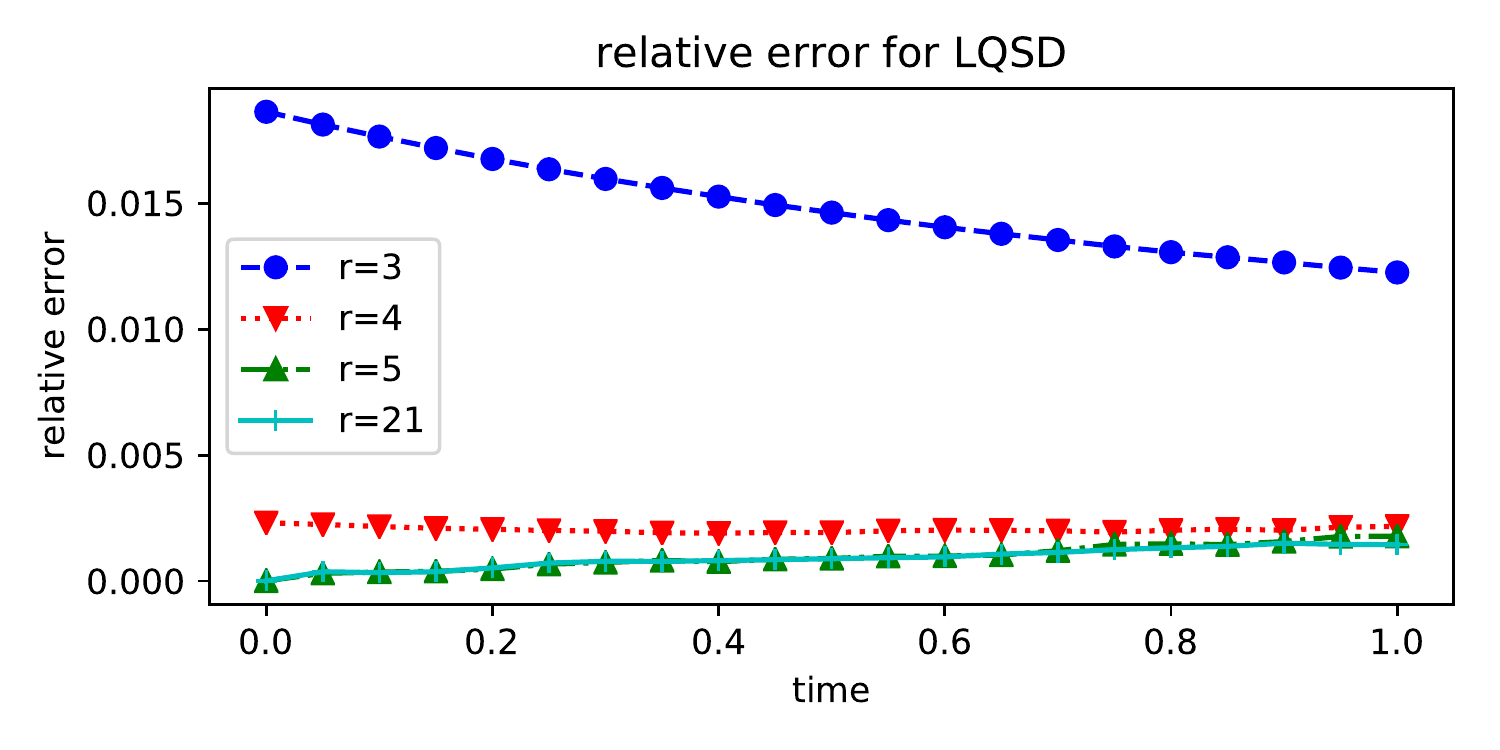}}  
    }
  }
  
  \caption{Spectrum (the five largest eigenvalues) and relative error of $\ee_{\mu_t} \bigl[xx^{\dagger}\bigr]$, for both QSD and LQSD unraveling schemes in quantum damped harmonic oscillator  with parameters $\gamma_1 = 0.2$, $\gamma_2 = 0.0$.}
\label{fig::qdho_test1}
\end{figure}

\begin{figure}[h]
  \centering
  \parbox{\figrasterwd}{
    \parbox{.45\figrasterwd}{%
      {\includegraphics[width=\hsize]{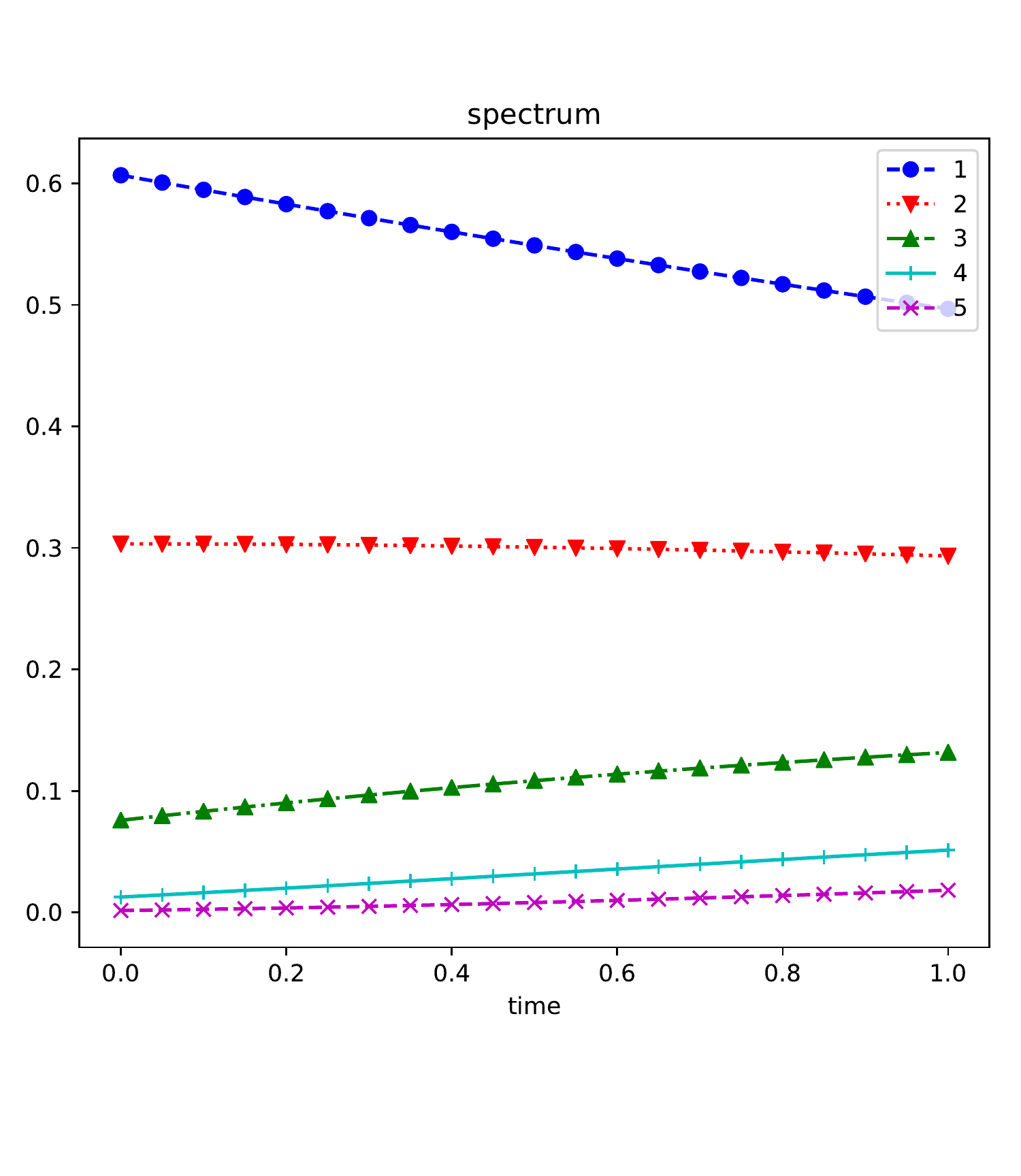}}
      }
    \parbox{.48\figrasterwd}{%
      {\includegraphics[width=\hsize]{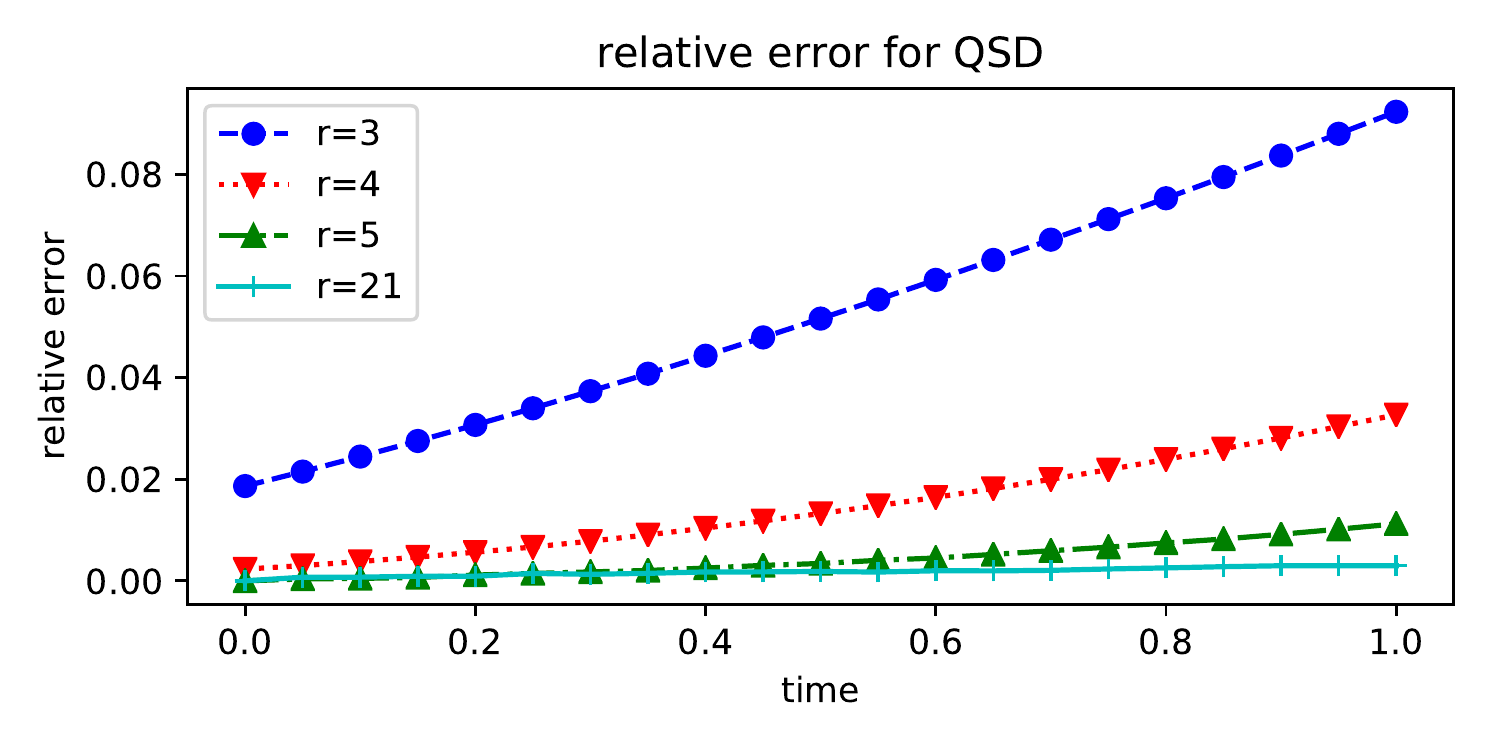}}
      {\includegraphics[width=\hsize]{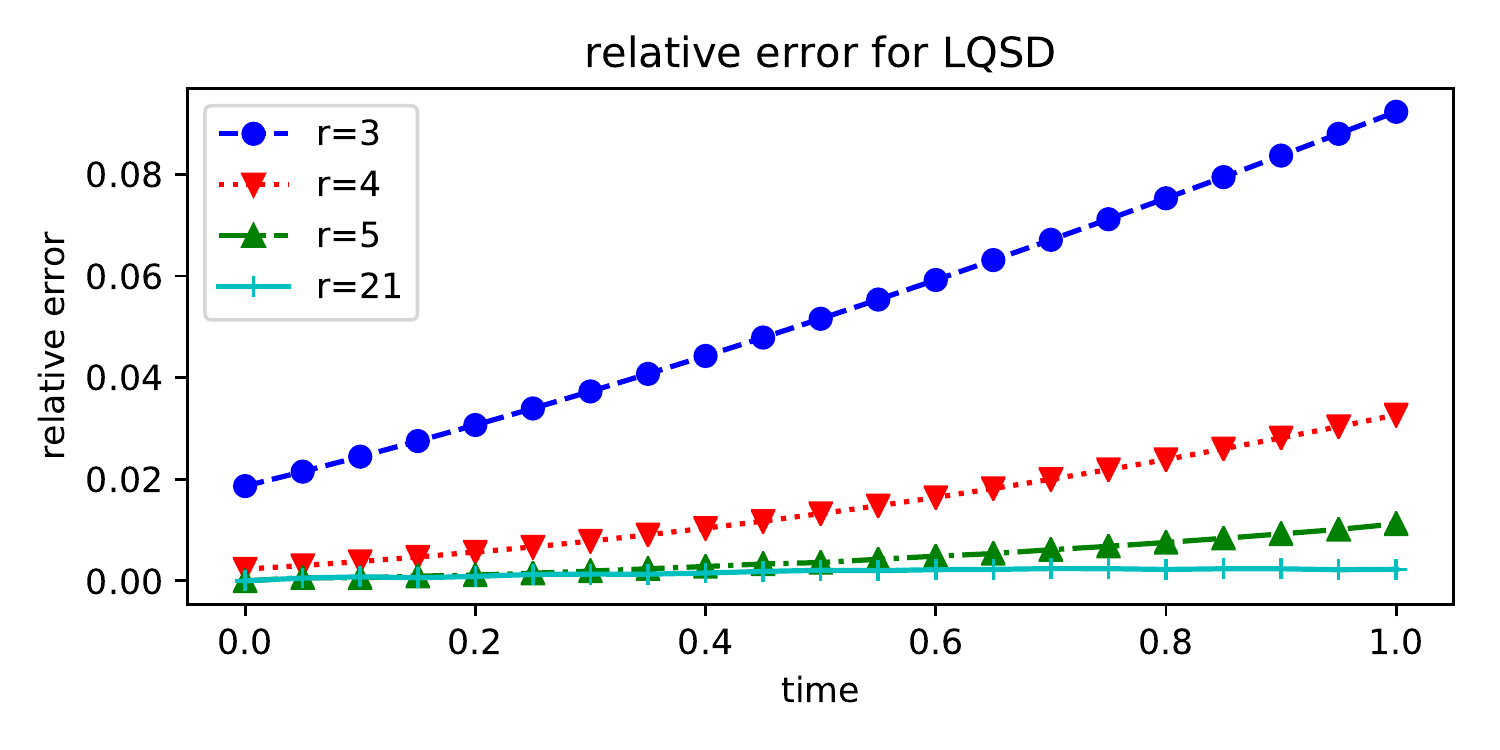}}  
    }
  }

\caption{Spectrum (the five largest eigenvalues) and relative error of $\ee_{\mu_t} \bigl[xx^{\dagger}\bigr]$, for both QSD and LQSD unraveling schemes  in quantum damped harmonic oscillator with parameters $\gamma_1= 0.0$, $\gamma_2 = 0.2$.
}
\label{fig::qdho_test2}
\end{figure}

%------------------------------------------------------------------%
% Conclusion
%------------------------------------------------------------------%
\newpage
\section{Conclusions and outlook}
\label{sec::conclusion}

In this paper, we have proposed a tangent space projection method in
the space of finite signed measures; it is termed as \thismethod{}
method (or \thismethodshort{} in abbreviation).  Then by applying
\thismethodshort{} method, we have derived the low-rank dynamics of
SDE in \thmref{thm::fx=xx}, obtaining an ODE-SDE coupled system as a
low-rank approximation of the original high-dimensional SDE in the weak sense.
We have also established a commuting diagram for the action of stochastic
unraveling and dynamical low-rank approximation. This method has further been
validated by error analysis. Three numerical examples have been provided in
\secref{sec::numerics} to demonstrate the good performance of this
low-rank approximation method for dissipative physical systems.

There are some continuing interesting questions to explore. For
instance, whether \thismethodshort{} method can be extended to
infinite-dimensional Hilbert space instead of $\hbt$. As is
well-known, many interesting quantum master equations (as well as
their unraveling schemes) evolve on an infinite-dimensional Hilbert
space. Also, many SPDEs are essentially SDEs on infinite dimensional
Hilbert space, \eg, stochastic Burgers' equation in the form given in
last section.  Though finite truncation is a must in numerical
simulation in practice, it is still desirable to see whether our
scheme could be applied to such SPDEs directly in theory.  Another
interesting question is to develop adaptive schemes for
\thismethodshort{} method that automatically adjust the rank
on-the-fly.  We shall leave these questions to future works.

\section*{Acknowledgment}
This work is partially supported by the National Science Foundation
under award DMS-1454939.

\appendix
\section{Additional proofs}
\subsection{Tangent space of Stiefel manifold $\uspace$}
\label{sec::stiefel_tangent}

Stiefel manifold on $\hbt$ is a collection of $\dimn \times r$ complex-valued matrices with orthonormal columns; it is denoted by $\uspace$. 
For a differentiable trajectory $\opu(t)$ in the Stiefel manifold, one has $\opu^{\dagger}(t) \opu(t) = \id_{r\times r}$ for all time $t$, thus
$\dot{\opu}^{\dagger}(t) \opu(t) + \opu^{\dagger}(t) \dot{\opu}(t) = 0$
and consequently, tangent space
$\tang_{\opu(t)} \uspace = \left\{\opv \in \Complex^{\dimn\times r}: \opv^{\dagger} \opu(t) + \opu^{\dagger}(t) \opv = 0 \right\}.$
Then we shall prove the following lemma.

\begin{lemma} The tangent space of Stiefel manifold is given by
\[
\tang_{\opu(t)} \uspace = \left\{ i \opg \opu(t):\ \opg^{\dagger} = \opg \right\}.
\]
\end{lemma}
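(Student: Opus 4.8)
The plan is to use the characterization of $\tang_{\opu(t)}\uspace$ derived immediately above, namely
\[
\tang_{\opu(t)}\uspace = \bigl\{\opv \in \Complex^{\dimn\times r}:\ \opv^{\dagger}\opu(t) + \opu^{\dagger}(t)\opv = 0\bigr\},
\]
and to prove that this set equals $\bigl\{i\opg\opu(t):\ \opg^{\dagger}=\opg\bigr\}$ by establishing the two inclusions. Throughout I fix $t$ and abbreviate $\opu = \opu(t)$.

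The inclusion $\supseteq$ is immediate: if $\opg$ is Hermitian and $\opv := i\opg\opu$, then $\opv^{\dagger}\opu + \opu^{\dagger}\opv = -i\opu^{\dagger}\opg\opu + i\opu^{\dagger}\opg\opu = 0$, so $\opv$ lies in the tangent space. For the reverse inclusion, given $\opv$ with $\opv^{\dagger}\opu + \opu^{\dagger}\opv = 0$, I would exhibit an explicit Hermitian $\opg$ with $i\opg\opu = \opv$. The key observation is that $S := \opu^{\dagger}\opv$ is anti-Hermitian, since $S^{\dagger} = \opv^{\dagger}\opu = -\opu^{\dagger}\opv = -S$; this is exactly the content of the tangency constraint. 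I then set
\[
\opg := -i\bigl(\opv\opu^{\dagger} - \opu\opv^{\dagger}\bigr) + i\,\opu\opu^{\dagger}\opv\opu^{\dagger}.
\]
The first bracket is manifestly Hermitian, and the last term is $i\opu S\opu^{\dagger}$, which is Hermitian because $(iS)^{\dagger} = -iS^{\dagger} = iS$. A short multiplication using $\opu^{\dagger}\opu = \id$ and $\opv^{\dagger}\opu = -\opu^{\dagger}\opv$ gives $\opg\opu = -i\opv - i\proj_{\opu}\opv + i\proj_{\opu}\opv = -i\opv$, hence $i\opg\opu = \opv$, as desired. Equivalently, one may complete $\opu$ to a full orthonormal basis $[\,\opu\ \ \opu_{\perp}\,]$, decompose $\opv = \opu S + \opu_{\perp}T$ with $S$ anti-Hermitian and $T$ arbitrary, and simply read off the required block form of $\opg$; this version also makes transparent the real-dimension count $\dimn^2 - (\dimn-r)^2 = 2\dimn r - r^2$, matching $\dim\uspace$ and confirming that no tangent directions are lost.

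The main obstacle is the reverse inclusion: one has to guess the correct $\opg$ and then certify that it is genuinely Hermitian, both of which hinge on the anti-Hermiticity of $\opu^{\dagger}\opv$. The naive first guess $\opg_0 = -i(\opv\opu^{\dagger} - \opu\opv^{\dagger})$ is Hermitian but overshoots, producing $i\opg_0\opu = \opv + \proj_{\opu}\opv$ rather than $\opv$; the correction term $i\opu\opu^{\dagger}\opv\opu^{\dagger}$ is precisely what cancels the spurious $\proj_{\opu}\opv$ while preserving Hermiticity (this is where $S^{\dagger}=-S$ is used again). Once the right $\opg$ is in hand, the remaining verification is a routine manipulation of $\dimn\times r$ and $\dimn\times\dimn$ matrices, so I would carry it out in a couple of lines and conclude the set equality.
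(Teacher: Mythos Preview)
Your proof is correct and essentially the same as the paper's: both directions are handled identically, and for the nontrivial inclusion you produce an explicit Hermitian $\opg$ with $i\opg\opu=\opv$. The paper reaches this $\opg$ by completing $\opu$ to a full unitary $[\opu\ \opu_\perp]$ and reading off the blocks $\proj_{\opu}\wt{\opg}\proj_{\opu}$, $\projperp_{\opu}\wt{\opg}\proj_{\opu}$ (and the Hermitian conjugate of the latter), whereas you write it down directly as a correction to the naive guess $-i(\opv\opu^{\dagger}-\opu\opv^{\dagger})$; a short calculation using $\opv^{\dagger}\opu=-\opu^{\dagger}\opv$ shows the two formulas for $\opg$ coincide.
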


\begin{proof}
Let us first prove $\tang_{\opu(t)} \uspace \subset \left\{i \opg \opu(t):\ \opg \text{ is a Hermitian matrix} \right\}$. The other direction is trivial.

Suppose the columns of $\opu(t)$ are denoted by $U_1, U_2, \cdots U_r$ respectively. One could complete the basis and find an orthonormal set $\left\{U_{r+1}, \cdots U_{\dimn-1}, U_{\dimn}\right\}$. Let $\opu_{\perp} = \begin{bmatrix} U_{r+1} & \cdots &  U_{\dimn-1} & U_{\dimn}\end{bmatrix}$. Then $\begin{bmatrix} \opu(t) & \opu_{\perp}\end{bmatrix}$ is a unitary matrix. 
Let us extend the matrix $\opv\in \tang_{\opu(t)} \uspace$ 
to be $\begin{bmatrix} \opv & \wt{\opv} \end{bmatrix}$ 
and define matrix $\wt{\opg}$ by 
$\begin{bmatrix}\opv & \wt{\opv} \end{bmatrix} \begin{bmatrix} \opu(t) & \opu_{\perp}\end{bmatrix}^{\dagger} =: i \wt{\opg}$. The choice of $\wt{\opv}_{\dimn\times (\dimn-r)}$ does not play any role in the proof; it is only introduced to conveniently define $\wt{\opg}$. 
Then
\[i \wt{\opg} = \opv \opu^{\dagger}(t) + \wt{\opv} \opu_{\perp}^{\dagger}.\]
Then one could calculate that
\[\left\{\begin{split}
& \proj_{\opu(t)} \wt{\opg} \proj_{\opu(t)} = (-i) \opu(t) \left(\opu^{\dagger}(t) \opv \right) \opu^{\dagger}(t) \\
& \projperp_{\opu(t)}  \wt{\opg} \proj_{\opu(t)} = (-i) \projperp_{\opu(t)} \opv \opu^{\dagger}(t), 
\end{split}\right.\]
where $\proj_{\opu(t)} := \opu(t) \opu^{\dagger}(t)$ and 
$\projperp_{\opu(t)} := \id - \proj_{\opu(t)}$.
It could be straightforwardly verified that $\proj_{\opu(t)} \wt{\opg} \proj_{\opu(t)}$ is Hermitian,  
due to the assumption that 
$\opv \in \tang_{\opu(t)} \uspace$. 
Let us define Hermitian matrix $\opg$ by
\[\opg := \proj_{\opu(t)} \wt{\opg} \proj_{\opu(t)} + \projperp_{\opu(t)} \wt{\opg} \proj_{\opu(t)} + \proj_{\opu(t)} \wt{\opg}^{\dagger} \projperp_{\opu(t)} .\]
Then 
\[i \opg \opu(t) = \left( \opu(t) \bigl(\opu^{\dagger}(t) \opv \bigr) \opu^{\dagger}(t) + \projperp_{\opu(t)} \opv \opu^{\dagger}(t) \right) \opu(t) = \opv. \]
Thus, $\tang_{\opu(t)} \uspace \subset \left\{i \opg \opu(t):\ \opg \text{ is a Hermitian matrix} \right\}$ and the proof is completed.
\end{proof}

\subsection{Equivalent choices of test functions}\label{sec:equivalence}

\begin{lemma}
\label{lemma::choice_equivalent_F}
In defining \sdm{} in \eqrefn{eqn::metric}, the following two choices are equivalent,
\begin{enumerate}
\item $\funcsp = \left\{f(x) = xx^{\dagger}\right\}$ with Hilbert-Schmidt norm.

\item \[\funcsp = \left\{f(x)=\Inner{x}{\opo x} \;\big\vert \;
  \text{Hermitian matrix } \opo \text{ satisfies } \norm{\opo}_{\hs} \le
  1\right\}\]
and the associated norm is simply absolute value.
\end{enumerate}
\end{lemma}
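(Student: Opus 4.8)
The plan is to reduce the claimed identity, for any pair $\nu_1,\nu_2\in\mani$, to an elementary fact about Hilbert--Schmidt duality restricted to Hermitian matrices. Set $M := \int xx^{\dagger}\ \ud\nu_1 - \int xx^{\dagger}\ \ud\nu_2$; this is a well-defined matrix because $\int \abs{x}^2\ \Abs{\nu_i}(\ud x) < \infty$ forces each entry of $\int xx^{\dagger}\ \ud\nu_i$ to be finite, and it is Hermitian since $xx^{\dagger}$ is Hermitian and $\nu_i$ are real signed measures. By definition, the pseudometric coming from choice (1) evaluated at $(\nu_1,\nu_2)$ is exactly $\Norm{M}_{\hs}$. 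For choice (2), I would first rewrite, for a Hermitian matrix $\opo$,
\[\int \Inner{x}{\opo x}\ \ud\nu_i = \int \tr\bigl(\opo\, xx^{\dagger}\bigr)\ \ud\nu_i = \tr\Bigl(\opo \int xx^{\dagger}\ \ud\nu_i\Bigr) = \Inner{\opo}{\int xx^{\dagger}\ \ud\nu_i}_{\hs},\]
using linearity of the trace and $\opo^{\dagger} = \opo$; subtracting the two expressions, the pseudometric from choice (2) at $(\nu_1,\nu_2)$ equals $\sup\bigl\{\,\abs{\Inner{\opo}{M}_{\hs}} : \opo^{\dagger}=\opo,\ \Norm{\opo}_{\hs}\le 1\,\bigr\}$ (note $\Inner{\opo}{M}_{\hs}=\tr(\opo M)$ is real whenever both factors are Hermitian, so the absolute value is unambiguous).

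It then remains to show that $\sup\{\abs{\Inner{\opo}{M}_{\hs}} : \opo^{\dagger}=\opo,\ \Norm{\opo}_{\hs}\le 1\} = \Norm{M}_{\hs}$. The upper bound is Cauchy--Schwarz for the Hilbert--Schmidt inner product: $\abs{\Inner{\opo}{M}_{\hs}}\le \Norm{\opo}_{\hs}\Norm{M}_{\hs}\le\Norm{M}_{\hs}$. For the matching lower bound, if $M = 0$ both sides vanish; otherwise take $\opo := M/\Norm{M}_{\hs}$, which is admissible --- it is Hermitian because $M$ is, and has unit Hilbert--Schmidt norm --- and yields $\Inner{\opo}{M}_{\hs} = \Norm{M}_{\hs}$. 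Hence the supremum equals $\Norm{M}_{\hs}$, so the two pseudometrics agree on every pair $(\nu_1,\nu_2)$, which is precisely the asserted equivalence.

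I do not anticipate a genuine obstacle here; the only points requiring a line of care are that the optimizing $\opo$ lands inside the admissible (Hermitian, unit-norm) set --- which is exactly why $M$ being Hermitian matters, and is the content of restricting $\funcsp$ in (2) to Hermitian $\opo$ --- and the well-definedness and finiteness of all the integrals involved, which is guaranteed by the bounded-second-moment condition built into the definition of $\mani$.
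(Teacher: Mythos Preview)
Your proof is correct and follows essentially the same route as the paper's: rewrite $\int\Inner{x}{\opo x}\,\ud\nu$ as $\tr\bigl(\opo\int xx^{\dagger}\,\ud\nu\bigr)$, bound by Cauchy--Schwarz in the Hilbert--Schmidt inner product, and saturate with $\opo = M/\Norm{M}_{\hs}$. If anything, you are slightly more explicit than the paper in justifying why the optimizer $\opo$ is Hermitian (via the Hermiticity of $M$) and in handling the degenerate case $M=0$.
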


\begin{proof}
Denote the \sdm{} defined by choice (1) as $d_{\funcsp_1}$ and the one defined by choice (2) as $d_{\funcsp_2}$. 

Firstly, for $\nu = \nu_1 - \nu_2 \in \mani$, 
\[\begin{split}
 \Abs{\int \Inner{x}{\opo x}\ \ud \nu} &= \Abs{\int \tr(xx^{\dagger} \opo)\ \ud \nu} = \Abs{\tr\left(\int xx^{\dagger}\ \ud\nu \opo \right)} \\
&\le \Norm{\int xx^{\dagger}\ \ud \nu}_{\hs} \Norm{\opo}_{\hs} \le \Norm{\int xx^{\dagger}\ \ud \nu}_{\hs}  = d_{\funcsp_1}(\nu_1, \nu_2).
\end{split}\]
Then, it implies that $d_{\funcsp_2}(\nu_1, \nu_2) \le d_{\funcsp_1}(\nu_1, \nu_2)$. 
Secondly, we shall prove that the equality could be reached. The equality is reached when $\opo = \frac{\int xx^{\dagger}\ \ud \nu}{\Norm{\int xx^{\dagger}\ \ud \nu}_{\hs}}$, which is also a Hermitian matrix. Thus these two choices are equivalent. 
 
\end{proof}

\subsection{Proof of \thmref{thm::lebris}}
\label{app::proof_lebris}
At fixed time $t$, the tangent space of $\oprho_{\lr}(t) \equiv \LU(t) \LSIG(t) \LU^{\dagger}(t) \in \mani_r$ is parametrized by Hermitian matrices $\opg(t)$ and $\opeta(t)$ with the form
\[\frac{\ud}{\ud t}{\oprho}_{\lr}(t) = i \opg(t) \LU(t) \LSIG(t) \LU^{\dagger}(t) + \LU(t) \opeta(t) \LU^{\dagger}(t) -i  \LU(t) \LSIG(t)  \LU^{\dagger}(t) \opg(t), \]
where $\opg(t) = \projperp_{\LU(t)} \opg(t) \proj_{\LU(t)} + \hc$, 
while $\opeta(t)$ is any Hermitian matrix.

Denote $\opc \equiv \lbop\bigl(\oprho_{\lr}(t)\bigr) - \left(i \opg(t) \LU(t) \LSIG(t) \LU^{\dagger}(t) + \LU(t) \opeta(t) \LU^{\dagger}(t)  - i  \LU(t) \LSIG(t)  \LU^{\dagger}(t) \opg(t)\right)$. Then we need to minimize $\Norm{\opc}_{\hs}^2 = \Inner{\opc}{\opc}_{\hs}$ for varying $\opg(t)$ and $\opeta(t)$. The first order stationary conditions with respect to $\opg(t)$ and $\opeta(t)$ yield
\[\left\{\begin{split}
0 &=\tr\left( \Comm{ \oprho_{\lr}(t)}{ \opc } \delta \opg \right), \\
0 &= \tr\left(\LU^{\dagger}(t) \opc \LU(t) \delta \opeta \right).\\
\end{split}\right.\]
Similar to the argument in the proof of \thmref{thm::fx=xx}, since $\delta \opg$ is any Hermitian matrix satisfying $\delta \opg = \projperp_{\LU(t)} \delta \opg \proj_{\LU(t)}+ \hc$, from the first part in the last equation, one could obtain
\[\begin{split}
0 &= \projperp_{\LU(t)} \Comm{ \oprho_{\lr}(t)}{ \opc}  \proj_{\LU(t)} \\
&= - \projperp_{\LU(t)} \opc \LU(t) \LSIG(t) \LU^{\dagger}(t). \\
\end{split}\]
After replacing $\opc$ by its definition and some simplification, one could obtain
\[\projperp_{\LU(t)} \opg(t) \proj_{\LU(t)} = -i \projperp_{\LU(t)} \lbop\bigl( \oprho_{\lr}(t) \bigr) \LU(t) \LSIG(t)^{-1} \LU^{\dagger}(t).\]
Then by the fact that $\frac{\ud}{\ud t} \LU(t) = i \opg(t) \LU(t)$, the time-evolution equation for $\frac{\ud}{\ud t}\LU(t)$ could be derived easily. 

From the second part of first order stationary condition (\ie, with respect to $\opeta(t)$), one could deduce that $\LU^{\dagger}(t) \opc \LU(t) = 0$. 
After plugging in the expression of $\opc$, 
it follows immediately that
\[\opeta(t) = \LU^{\dagger}(t) \lbop\big(\oprho_{\lr}(t) \bigr)\LU(t).\]

\addchange{
Since there is only one solution satisfying stationary conditions,
the optimal solution for pair $(\opg(t), \opeta(t))$ is unique.}

\bibliographystyle{elsarticle-num}
\bibliography{ref.bib}

\begin{thebibliography}{10}
\expandafter\ifx\csname url\endcsname\relax
  \def\url#1{\texttt{#1}}\fi
\expandafter\ifx\csname urlprefix\endcsname\relax\def\urlprefix{URL }\fi
\expandafter\ifx\csname href\endcsname\relax
  \def\href#1#2{#2} \def\path#1{#1}\fi

\bibitem{antoulas_survey_2001}
A.~C. Antoulas, D.~C. Sorensen, S.~Gugercin, A survey of model reduction
  methods for large-scale systems, Contemporary Mathematics 280 (2001)
  193--219.

\bibitem{Chatterjee_00_pod}
A.~Chatterjee, \href{http://www.jstor.org/stable/24103957}{An introduction to
  the proper orthogonal decomposition}, Current Science 78~(7) (2000) 808--817.
\newline\urlprefix\url{http://www.jstor.org/stable/24103957}

\bibitem{Gisin92}
N.~Gisin, I.~C. Percival,
  \href{http://stacks.iop.org/0305-4470/25/i=21/a=023}{The quantum-state
  diffusion model applied to open systems}, Journal of Physics A: Mathematical
  and General 25~(21) (1992) 5677.
\newline\urlprefix\url{http://stacks.iop.org/0305-4470/25/i=21/a=023}

\bibitem{Dalibard92}
J.~Dalibard, Y.~Castin, K.~M\o{}lmer,
  \href{http://link.aps.org/doi/10.1103/PhysRevLett.68.580}{Wave-function
  approach to dissipative processes in quantum optics}, Phys. Rev. Lett. 68
  (1992) 580--583.
\newblock \href {http://dx.doi.org/10.1103/PhysRevLett.68.580}
  {\path{doi:10.1103/PhysRevLett.68.580}}.
\newline\urlprefix\url{http://link.aps.org/doi/10.1103/PhysRevLett.68.580}

\bibitem{Tully}
J.~C. Tully, \href{https://doi.org/10.1063/1.459170}{Molecular dynamics with
  electronic transitions}, The Journal of Chemical Physics 93~(2) (1990)
  1061--1071.
\newblock \href {http://arxiv.org/abs/https://doi.org/10.1063/1.459170}
  {\path{arXiv:https://doi.org/10.1063/1.459170}}, \href
  {http://dx.doi.org/10.1063/1.459170} {\path{doi:10.1063/1.459170}}.
\newline\urlprefix\url{https://doi.org/10.1063/1.459170}

\bibitem{Pavliotis}
G.~A. Pavliotis, Stochastic Processes and Applications: Diffusion Processes,
  the Fokker-Planck and Langevin Equations, Springer, New York, NY, 2014.

\bibitem{Lindblad76}
G.~Lindblad, \href{http://projecteuclid.org/euclid.cmp/1103899849}{On the
  generators of quantum dynamical semigroups}, Comm. Math. Phys. 48~(2) (1976)
  119--130.
\newline\urlprefix\url{http://projecteuclid.org/euclid.cmp/1103899849}

\bibitem{Gorini76}
V.~Gorini, A.~Kossakowski, E.~C.~G. Sudarshan,
  \href{http://aip.scitation.org/doi/abs/10.1063/1.522979}{Completely positive
  dynamical semigroups of {N}-level systems}, Journal of Mathematical Physics
  17~(5) (1976) 821--825.
\newline\urlprefix\url{http://aip.scitation.org/doi/abs/10.1063/1.522979}

\bibitem{Breuer}
H.-P. Breuer, F.~Petruccione, The Theory of Open Quantum Systems, Oxford
  University Press, 2002.

\bibitem{Brun00}
T.~A. Brun,
  \href{http://link.aps.org/doi/10.1103/PhysRevA.61.042107}{Continuous
  measurements, quantum trajectories, and decoherent histories}, Phys. Rev. A
  61 (2000) 042107.
\newblock \href {http://dx.doi.org/10.1103/PhysRevA.61.042107}
  {\path{doi:10.1103/PhysRevA.61.042107}}.
\newline\urlprefix\url{http://link.aps.org/doi/10.1103/PhysRevA.61.042107}

\bibitem{sapsis_dynamically_2011}
T.~P. Sapsis, \href{http://dspace.mit.edu/handle/1721.1/65282}{Dynamically
  orthogonal field equations for stochastic fluid flows and particle dynamics},
  Thesis, Massachusetts Institute of Technology (2011).
\newline\urlprefix\url{http://dspace.mit.edu/handle/1721.1/65282}

\bibitem{Sapsis09}
T.~P. Sapsis, P.~F. Lermusiaux,
  \href{http://www.sciencedirect.com/science/article/pii/S0167278909002917}{Dynamically
  orthogonal field equations for continuous stochastic dynamical systems},
  Physica D: Nonlinear Phenomena 238~(23) (2009) 2347 -- 2360.
\newblock \href
  {http://dx.doi.org/http://dx.doi.org/10.1016/j.physd.2009.09.017}
  {\path{doi:http://dx.doi.org/10.1016/j.physd.2009.09.017}}.
\newline\urlprefix\url{http://www.sciencedirect.com/science/article/pii/S0167278909002917}

\bibitem{Sapsis12}
T.~P. Sapsis, P.~F. Lermusiaux,
  \href{http://www.sciencedirect.com/science/article/pii/S0167278911002740}{Dynamical
  criteria for the evolution of the stochastic dimensionality in flows with
  uncertainty}, Physica D: Nonlinear Phenomena 241~(1) (2012) 60 -- 76.
\newblock \href
  {http://dx.doi.org/http://dx.doi.org/10.1016/j.physd.2011.10.001}
  {\path{doi:http://dx.doi.org/10.1016/j.physd.2011.10.001}}.
\newline\urlprefix\url{http://www.sciencedirect.com/science/article/pii/S0167278911002740}

\bibitem{Musharbash15}
E.~Musharbash, F.~Nobile, T.~Zhou,
  \href{https://doi.org/10.1137/140967787}{Error analysis of the dynamically
  orthogonal approximation of time dependent random {PDEs}}, SIAM Journal on
  Scientific Computing 37~(2) (2015) A776--A810.
\newblock \href {http://arxiv.org/abs/https://doi.org/10.1137/140967787}
  {\path{arXiv:https://doi.org/10.1137/140967787}}, \href
  {http://dx.doi.org/10.1137/140967787} {\path{doi:10.1137/140967787}}.
\newline\urlprefix\url{https://doi.org/10.1137/140967787}

\bibitem{Cheng13_theory}
M.~Cheng, T.~Y. Hou, Z.~Zhang,
  \href{http://www.sciencedirect.com/science/article/pii/S0021999113001526}{A
  dynamically bi-orthogonal method for time-dependent stochastic partial
  differential equations \rom{1}: {Derivation} and algorithms}, Journal of
  Computational Physics 242 (2013) 843 -- 868.
\newblock \href {http://dx.doi.org/http://dx.doi.org/10.1016/j.jcp.2013.02.033}
  {\path{doi:http://dx.doi.org/10.1016/j.jcp.2013.02.033}}.
\newline\urlprefix\url{http://www.sciencedirect.com/science/article/pii/S0021999113001526}

\bibitem{Cheng13_complexity}
M.~Cheng, T.~Y. Hou, Z.~Zhang,
  \href{http://www.sciencedirect.com/science/article/pii/S0021999113001393}{A
  dynamically bi-orthogonal method for time-dependent stochastic partial
  differential equations \rom{2}: {Adaptivity} and generalizations}, Journal of
  Computational Physics 242 (2013) 753 -- 776.
\newblock \href {http://dx.doi.org/http://dx.doi.org/10.1016/j.jcp.2013.02.020}
  {\path{doi:http://dx.doi.org/10.1016/j.jcp.2013.02.020}}.
\newline\urlprefix\url{http://www.sciencedirect.com/science/article/pii/S0021999113001393}

\bibitem{Lubich07}
O.~Koch, C.~Lubich, \href{http://dx.doi.org/10.1137/050639703}{Dynamical
  low-rank approximation}, SIAM Journal on Matrix Analysis and Applications
  29~(2) (2007) 434--454.
\newblock \href {http://arxiv.org/abs/http://dx.doi.org/10.1137/050639703}
  {\path{arXiv:http://dx.doi.org/10.1137/050639703}}, \href
  {http://dx.doi.org/10.1137/050639703} {\path{doi:10.1137/050639703}}.
\newline\urlprefix\url{http://dx.doi.org/10.1137/050639703}

\bibitem{LeBris13}
C.~Le~Bris, P.~Rouchon, Low-rank numerical approximations for high-dimensional
  {Lindblad} equations, Phys. Rev. A 87 (2013) 022125, extended version
  available at arXiv:1207.4580.
\newblock \href {http://dx.doi.org/10.1103/PhysRevA.87.022125}
  {\path{doi:10.1103/PhysRevA.87.022125}}.

\bibitem{LeBris15}
C.~Le~Bris, P.~Rouchon, J.~Roussel, Adaptive low-rank approximation and
  denoised {Monte Carlo} approach for high-dimensional {Lindblad} equations,
  Phys. Rev. A 92 (2015) 062126.
\newblock \href {http://dx.doi.org/10.1103/PhysRevA.92.062126}
  {\path{doi:10.1103/PhysRevA.92.062126}}.

\bibitem{Ambrosio}
L.~Ambrosio, N.~Gigli, G.~Savar{\'e}, Gradient flows: in metric spaces and in
  the space of probability measures, Birkh{\"a}user, Basel ; Boston, 2008.

\bibitem{Lott17}
J.~{Lott}, {On tangent cones in Wasserstein space}, ArXiv e-prints\href
  {http://arxiv.org/abs/1407.7245} {\path{arXiv:1407.7245}}.

\bibitem{Higham96}
D.~J. Higham,
  \href{http://www.sciencedirect.com/science/article/pii/S0168927496000335}{Runge-{K}utta
  type methods for orthogonal integration}, Applied Numerical Mathematics
  22~(1) (1996) 217 -- 223.
\newblock \href
  {http://dx.doi.org/http://dx.doi.org/10.1016/S0168-9274(96)00033-5}
  {\path{doi:http://dx.doi.org/10.1016/S0168-9274(96)00033-5}}.
\newline\urlprefix\url{http://www.sciencedirect.com/science/article/pii/S0168927496000335}

\bibitem{Babaee17}
H.~Babaee, M.~Choi, T.~P. Sapsis, G.~E. Karniadakis,
  \href{http://www.sciencedirect.com/science/article/pii/S0021999117303364}{A
  robust bi-orthogonal/dynamically-orthogonal method using the covariance
  pseudo-inverse with application to stochastic flow problems}, Journal of
  Computational Physics 344 (2017) 303 -- 319.
\newblock \href {http://dx.doi.org/https://doi.org/10.1016/j.jcp.2017.04.057}
  {\path{doi:https://doi.org/10.1016/j.jcp.2017.04.057}}.
\newline\urlprefix\url{http://www.sciencedirect.com/science/article/pii/S0021999117303364}

\bibitem{Ozen16_SPDE}
H.~{Cagan Ozen}, G.~{Bal}, {A dynamical polynomial chaos approach for long-time
  evolution of SPDEs}, ArXiv e-prints\href {http://arxiv.org/abs/1605.04604}
  {\path{arXiv:1605.04604}}.

\bibitem{Fujii12}
K.~Fujii, Quantum damped harmonic oscillator, in: P.~Bracken (Ed.), Advances in
  Quantum Mechanics, InTech, 2013.
\newblock \href {http://arxiv.org/abs/arXiv:1209.1437}
  {\path{arXiv:arXiv:1209.1437}}.

\end{thebibliography}

\end{document}